\theoremstyle{plain}							
\newtheorem{thm}{\textbf{Theorem}}[section]
\newtheorem{prop}[thm]{Proposition}
\newtheorem{cor}[thm]{Corollary}
\newtheorem{lem}[thm]{Lemma}
\theoremstyle{definition}						
\newtheorem{defi}[thm]{Definition}
\newtheorem{rem}[thm]{Remark}
\newtheorem{ex}[thm]{Example}
\theoremstyle{remark}							
\newcommand{\Iff}{\Leftrightarrow}				
\renewcommand{\and}{\wedge}						
\newcommand{\onto}{\twoheadrightarrow}			
\newcommand{\into}{\hookrightarrow}				
\newcommand{\NN}{\mathbb N}						
\newcommand{\ZZ}{\mathbb Z}						
\newcommand{\QQ}{\mathbb Q}						
\newcommand{\RR}{\mathbb R}						
\newcommand{\proj}{\varprojlim}					
\newcommand{\xx}{\times}
\renewcommand{\phi}{\varphi}					
\newcommand{\ms}{\mathscr}						
\newcommand{\mf}{\mathfrak}						
\newcommand{\ov}{\overline}						
\newcommand{\m}[1]{\mathsf{#1}} 				
\newcommand{\wt}{\widetilde}					
\newcommand{\subs}{\subseteq}
\newcommand{\sups}{\supseteq}
\newcommand{\srad}{\sqrt[s]}
\newcommand{\p}{\mf p}							
\renewcommand{\m}{\mf m}							
\newcommand{\n}{\mf n}
\renewcommand{\O}{\ms O}								
\newcommand{\<}{\langle}
\renewcommand{\>}{\rangle}
\newcommand{\erad}{\sqrt[e]}
\title{Equiresidual Algebraic Geometry I: The Affine Theory}
\author{Jean Barbet}
\thanks{We would like to thank the IRMA of Strasbourg (France) for granting us access to their scientific library}
\begin{document}

\maketitle


\begin{abstract}
In this first work dedicated to the generalisation of classic algebraic geometry to non algebraically closed fields and axiomatisable classes of fields, we develop the foundations for equiresidual algebraic geometry (EQAG), i.e. algebraic geometry over any commutative field $k$, algebraically closed or not. 
This is possible thanks to the existence in non algebraically closed fields of many normic forms, i.e.  homogeneous polynomials with no non-trivial zero rational in $k$, and relies on an equiresidual generalisation of Hilbert's Nullstellensatz and the Jacobson radical in finitely presented $k$-algebras. Usual algebraic constructions are naturally worked out using a new type of $k$-algebras which correspond to localisations of $k$-algebras by means of polynomials over $k$ with no inner zero. 
The theory leads to a fruitful characterisation of the sections of sheaves of regular functions over affine algebraic sets, providing a dualisation of the (equiresidual) affine algebraic varieties over $k$ by an equiresidual analogue of reduced algebras of finite type. 
The true generalisation of reduced algebras are defined using the normic forms and encompass all rings of \emph{rational} functions over affine subvarieties. These \og special" algebras are interpreted as giving rise to a \og special" radical, an equiresidual version of the classic radical of an ideal, of which we give a clean algebraic characterisation. Special extensions are the \og right" field extensions of the ground field in EQAG and provide a connexion with model-theoretic algebraic geometry, as exemplified with real closed fields and p-adically closed fields. The special radical is well-behaved with respect to localisation, and this leads to an equiresidual analogue of the prime spectrum, which connects affine EQAG with scheme theory.
\end{abstract}

\tableofcontents

\section{Introduction}
This is the first part of a series or works dedicated to the generalisation of classic algebraic geometry to non algebraically closed fields and axiomatisable classes of fields, both subjects being intimately and fundamentally connected. Indeed, if we consider algebraic geometry over an algebraically closed field as a classic paradigm, we may go in either one of two directions : change the ground field for any field, or change the category of algebraically closed fields for another (axiomatisable) category. In this first part we develop the foundations for \emph{equiresidual algebraic geometry} (EQAG).

\subsection*{The problem and aim of equiresidual algebraic geometry}
EQAG aims at extending classic algebraic geometry to any commutative field, i.e. algebraically closed or not. This is a most natural problem, since classic methods in linear algebra and affine and projective geometry are available over any commutative field and the basic objects of algebraic geometry may be defined inside any such, and except if we are not aware of it, one only wonders why it has not been systematically undertaken before. The first step we take here is building affine algebraic geometry \emph{inside} or \emph{relative to} any field, for its own sake and in the perspective of an extension to categories of \og similar" fields, for instance satisfying the axioms of a first order theory : the numerous examples coming from number theory indeed show a close relationship between arithmetic, differential, and definable aspects of their algebraic geometry, and EQAG turned out to be the starting ground for the sound systematic approach of \cite{PAG}.
Furthermore, the now classic and geometrically essential theory of henselian local rings and henselisation is truly written in an \og equiresidual" context, in the sense that geometrically speaking, one considers rational points of smooth varieties in the residual field of a local ring, rather than in a separable algebraic closure of this field, as opposed to strict henselisation (\cite{ALH}, VIII).\\
This problem and the present solution should not be confused with the usual method which consists in using universal domains (\cite{FA}, Chapter 10) or scheme theory over any commutative field $k$ (\cite{HAG}, Chapter 2) for the purpose of doing algebraic geometry over $k$ in the sense of \emph{with parameters in $k$}. Indeed, such an approach considers all rational points of varieties defined over $k$, virtually inside every (finite, algebraic) extension of the field $k$, and it paradoxically \emph{excludes much structure} on certain non-algebraically closed fields. For instance, the principle of \emph{real algebraic geometry} is to enrich the usual structure of algebraic varieties inside a real closed field $R$ by using polynomials \emph{which have no zero rational in such fields}, as for instance the polynomials of the form $P(X)=1+X_1^2+\ldots+X_n^2$, thanks to which it is possible to define an order on $R$, as well as semi-algebraic subsets of the afine spaces $R^n$. If we were to follow the approach of scheme theory over $R$, we would add all zeros of polynomials of the form $P$ and nullify the very purpose of real algebraic geometry ! Furthermore, by the Tarski-Seidenberg principle (\cite{RAG}, Theorem 1.4.2), essentially every semi-algebraic subset of $R^n$ which has a point rational in a real extension of $R$ already has one in $R$, so the basic semi-algebraic structure of real closed fields is essentially built on (sets of) polynomials which \emph{already} have zeros rational inside such a field. This shows that our problem is essentially relevant to peculiar features of algebraic geometry over \emph{non-algebraically closed fields}, which cannot be adressed by scheme theory alone or any approach consisting solely of looking at as many rational points of varieties as possible : we have to \emph{exclude} some of these points. The \og equiresidual" point of view in real algebraic geometry also clearly appears in the consideration of Nash functions and the characterisation of their germs using the equiresidual procedure of henselisation (\cite{RAG}, Chapter 8). All this shows that the common opinion that by working inside any commutative field, we \og loose some structure" at some point, is misguided.\\
Of course, the example of real algebraic geometry is one among many notable ones in model theory : in certain very specific cases, it is possible to develop whole analogues of complex and real algebraic geometry by using some specific features of a field or a family of fields often identified by a set of (first order) axioms. In particular, one knows in some core examples how to interpret the model-theoretic notion of quantifier elimination as some analogue of Chevalley's theorem on constructible sets (see Proposition 5.2.2 in \cite{RAG} for real algebraic geometry, and \cite{BEL2} for a $p$-adic analogue). In other words, if a given field $k$ is a model of what is known as a certain \emph{model-complete theory} of fields, it is often in the natural cases possible to develop the corresponding algebraic geometry in a (first order) axiomatisable class of fields : real (closed) fields for real algebraic geometry, $p$-adically closed fields for $p$-adic algebraic geometry, and so on... However, this approach is limited in several ways : first, for each particular case, one has to develop the corresponding theory, and there is no universal framework at this time; secondly, we have to know that our favorite field is the model of a model-complete theory, and often to have an explicit axiomatisation of such a theory, in order to conveniently work with it, which is not always the case (for instance, for $\QQ$ itself !); thirdly, we have to use first order logic at the outset, and it seems at least odd that a universal framework for algebraic geometry over any field should be founded on logic. In the present work we adress these three limitations, while providing a sound and purely algebraic foundation for algebraic geometry over any commutative field, retaining its particular features; as a development we will adress the problem of building a universal model-theoretic framework in a forthcoming publication.

\subsection*{Contents}
The article is structured in two parts, each containing two sections. In Part I, we deal with the \og strictly" equiresidual theory, i.e. all that pertains to the basic structures surrounding affine algebraic geometry inside a ground field $k$. In Part II, we deal with the proper algebraic objects with which to develop EQAG and its extension to model-theoretic algebraic geometry.\\
In Section \ref{EQNUL}, we found the affine approach to EQAG by investigating the basic structures surrounding affine \emph{subvarieties}. A certain \emph{equiresidual} Nullstellensatz (Theorem \ref{NSREL}) holds in every commutative field and rests on the existence in all non-algebraically closed fields of \emph{normic forms}, i.e. homogeneous polynomials having only the trivial zero. Characterising the so-called \emph{essential} maximal ideals of finitely generated algebras over a field $k$ which have points rational in $k$, we carve an analogue of the classic radical of an ideal, the \emph{equiresidual radical}, closely related to an \emph{algebraic signature} of the field, clarifying the \og dictionary" between affine algebraic subvarieties and ideals, and specifying the affine duality with the now \emph{essential} algebras of finite type. The \emph{canonical localisation} of a $k$-algebra leads to the abstract notion of \emph{$*$-algebra} - providing the \og right" category of algebras in which to work out EQAG in general - and to the elementary characterisation of the equiradical (Theorem \ref{CARERAD}), which sheds some light on essential $*$-algebras of finite type as such.\\
In Section \ref{SEQVAR}, we expand the strictly equiresidual affine theory by developing the \emph{intrinsic} point of view. We characterise the local sections of sheaves of regular functions on affine subvarieties (Theorem \ref{MAG.3.6.a}) using the equiradical and canonical localisation. The rings of global sections differ in general from the coordinate rings, therefore the classic abstract duality between affine varieties and reduced algebras of finite type has to be reconsidered. The algebraic pole is now played by essential $*$-algebras of finite $*$-type, whereas we introduce the geometric pole, \emph{affine algebraic equivarieties}, as a full subcategory of a natural category of \emph{equiresidual varieties}, locally ringed spaces over the base field $k$ among which we will also subsequently find projective spaces and varieties. The dualisation uses a natural concrete version of the maximal spectrum functor which fits in.\\
In Section \ref{SPECALGRAD}, we extend the strictly equiresidual algebraic theory to the natural setting in which we find the algebras surrounding EQAG. We generalise essential algebras to \emph{special algebras}, naturally associated to the \emph{homogeneous signature} of the ground field $k$, which may be characterised as the set of all its normic forms. We introduce the associated notion of \emph{special ideal} as the true intended generalisation of the radical and prime ideals of the classic theory, allowing us to consider rational function fields as \emph{special extensions} of $k$. These turn out to be the extensions preserving the algebraic signature of $k$, thus handling some implication of EQAG in model-theoretic algebraic geometry, as exemplified in the real and $p$-adic cases.\\
In Section \ref{LOCFUN}, we investigate the localisation of special algebras and apply it to other kinds of functions naturally associated with EQAG. We exchange the equiradical for the \emph{special radical}, of which an elementary characterisation using algebraic and homogeneous signatures is given in Theorem \ref{CARSPERAD}. Special ideals happily turn out to be exactly those equal to their special radical, and this entails that the special $*$-algebras of finite $*$-type are exactly the essential ones, completing the affine theory. The special radical commutes with any kind of localisation, and this allows us to describe rational functions over any affine variety as special $*$-algebras. These in general appear to have an exact representation as global sections of the special prime spectrum, an analogue of the prime spectrum which fits affine algebraic equivarieties into the framework of scheme theory.

\subsection*{Preliminaries and conventions}
All rings and fields considered are implicitly unitary and commutative and we use some standard notation, terminology and folklore from commutative algebra and algebraic geometry, which we briefly review and complete. If $k$ is a field and $I$ is an ideal of a polynomial algebra $k[X_1,\ldots,X_n]$, the corresponding (affine) algebraic subset of $k^n$ is noted $\ms Z(I)=\{P\in k^n : \forall f\in I, f(P)=0\}$. These algebraic subsets of $k^n$ are the closed sets of a Noetherian topology called the \emph{Zariski topology}; recall that in general any nonempty open subset of an irreducible closed set is dense and irreducible (\cite{HAG}, Example 1.1.3). If $V\subs k^n$ is an algebraic subset, the \emph{coordinate ring (or algebra)} of $V$ is the $k$-algebra $k[V]:=k[X_1,\ldots,X_n]/\ms I(V)$, where $\ms I(V)=\{f\in k[\ov X] :\forall P\in V,\ f(P)=0\}$ is the \emph{ideal of $V$}; any element $f\in k[V]$ defines a \emph{function} $V\to k$, $P\mapsto F(P)$, for any $F\in k[\ov X]$ such that $F+\ms I(V)=f$. By definition of the induced Zariski topology on $V$, any basic open subset will be denoted by $D_V(f)=\{P\in V : f(P)\neq 0\}=V-\ms Z_V(f)$ for a certain $f\in k[V]$.\\
It is mentioned in \cite{CLO}, Exercise 4.1.7, that if $k$ is any non-algebraically closed field, then for every $n>1$ there exists a non-constant homogeneous polynomial $P\in k[X_1,\ldots,X_n]$ such that the only zero of $P$ in $k^n$ is $(0,\ldots,0)$ (choose any non-zero polynomial $P\in k[X]$ in one variable with no zero rational in $k$, its homogenisation $P^\#(X,Y)$ has this property, and use induction on $n$). This justifies in general the following definition after McKenna (\cite{MK}, Lemma 4) :

\begin{defi}\label{NORMFORM}
We say that a non-constant homogeneous polynomial $P$ over $k$ is a \emph{normic form} if it has only the trivial zero rational in $k$.
\end{defi}
\begin{rem}\label{REMNORM}
This is for the moment only an \emph{analogue} of McKenna's model-theoretic definition. We develop a common generalisation using positive logic in \cite{PAG}.
\end{rem}

\noindent The only constant normic form over any field $k$ is $0$, and the normic forms in one variable are the non-zero monomials. Normic forms are useful - at least in \emph{non}-algebraically closed fields - in order to reduce the description of algebraic sets to sets of zeros of a unique polynomial, and we will make extensive use of this fact.\\
If $U\subs V$ is an open subset (for the induced Zariski topology on $V$), a function $f:U\to k$ is called \emph{regular at $P\in V$} if there exists an open neighbourhood $U_P\subs U$ of $P$ in $U$ and elements $g,h\in k[V]$ such that for all $Q\in U_P$, $h(Q)\neq 0$ and $f(Q)=g(Q)/h(Q)$; $f$ is called \emph{regular (over $U$)} if $f$ is regular at every $P\in U$ (and $f$ is then continuous). The set of regular functions over $U$ is written $\O_V(U)$ and $\O_V$ is a sheaf of $k$-algebras, called the \emph{sheaf of regular functions on $V$}. An \emph{affine (algebraic) subvariety of $k^n$} is a pair $(V,\O_V)$, where $V\subs k^n$ is an algebraic subset and $\O_V$ is its sheaf of regular functions. An element of the stalk $\O_{V,P}$ of $\O_V$ at $P\in V$ will be noted $[g,U]$, where $P\in U\subs V$ and $g\in \O_V(U)$, or simply $[g]$. The following proposition - which we will refer to as the \og stalk lemma" - should be folkloric but we have never read it elsewhere (in usual textbooks on algebraic geometry, it is proved on algebraically closed fields as a consequence of Hilbert's Nullstellensatz, see \cite{HAG}, Theorem I.3.2 for instance !) :

\begin{prop}[Stalk Lemma]
\label{SMALEM}
For any affine algebraic subvariety $V\subs k^n$ and for any $P\in V$, we have $\O_{V,P}\cong k[V]_{\m_P}$ for $\m_P=\{f\in k[V] : f(P)=0\}$. In particular, the structural morphism $k\to \ov{\O_{V,P}}$ is an isomorphism onto the residual field of $\O_{V,P}$.
\end{prop}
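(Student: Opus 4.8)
The plan is to construct an explicit comparison morphism $k[V]_{\m_P}\to\O_{V,P}$ and prove it bijective, isolating the two structural facts that, in this intrinsic setting, replace the Hilbert Nullstellensatz used in the classical argument. First, since $P$ is rational over $k$, the evaluation $\mathrm{ev}_P\colon k[V]\to k$, $f\mapsto f(P)$, is a surjective $k$-algebra homomorphism with kernel $\m_P$; thus $\m_P$ is maximal with residue field $k[V]/\m_P\cong k$. Second, by the very definition $k[V]=k[\ov X]/\ms I(V)$, the canonical map sending $f\in k[V]$ to its associated function $V\to k$ is \emph{injective}: if $f$ vanishes on all of $V$, any lift $F\in k[\ov X]$ lies in $\ms I(V)$, so $f=0$. (Over an algebraically closed field this injectivity is what one extracts from the Nullstellensatz; here it is built into the definition of the coordinate algebra.) To produce the morphism, I would send $f\in k[V]$ to the germ at $P$ of its polynomial function; each $h\notin\m_P$ has $h(P)\neq 0$, so $1/h$ is regular on the neighbourhood $D_V(h)$ of $P$ and $h$ maps to a unit, whence the universal property of localisation yields a $k$-algebra homomorphism $\Phi\colon k[V]_{\m_P}\to\O_{V,P}$ with $\Phi(g/h)=[Q\mapsto g(Q)/h(Q),\,D_V(h)]$.

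Surjectivity will be immediate from the definition of regularity at $P$: given a germ $[f,U]$, there are an open $U_P\subs U$ containing $P$ and $g,h\in k[V]$ with $h(Q)\neq 0$ and $f(Q)=g(Q)/h(Q)$ on $U_P$; then $h(P)\neq 0$, $U_P\subs D_V(h)$, and $[f,U]=\Phi(g/h)$.

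The heart of the matter is injectivity, and this is where I expect the only real work. Suppose $\Phi(a/s)=0$ with $s\notin\m_P$. Then the regular function $Q\mapsto a(Q)/s(Q)$ on $D_V(s)$ vanishes on some open neighbourhood $W\subs D_V(s)$ of $P$, so $a$ vanishes on $W$. Because the basic opens $D_V(t)$ form a basis of the induced Zariski topology, I can choose $t$ with $P\in D_V(t)\subs W$, i.e. $t(P)\neq 0$ and $a\equiv 0$ on $D_V(t)$. Then $ta$ vanishes as a function on all of $V$: where $t(Q)\neq 0$ one has $Q\in D_V(t)$ and $a(Q)=0$, and where $t(Q)=0$ the product is trivially $0$. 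By the injectivity recorded above, $ta=0$ already in $k[V]$, and since $t\notin\m_P$ this gives $a/s=(ta)/(ts)=0$ in $k[V]_{\m_P}$. The conceptual obstacle to overcome is precisely to see that no Nullstellensatz is required: vanishing of $a$ on a basic open $D_V(t)$ forces the \emph{identity} $ta=0$, after which the tautological embedding of $k[V]$ into the functions on $V$ finishes the job.

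The final assertion follows formally. Since $\O_{V,P}\cong k[V]_{\m_P}$ is local with maximal ideal $\m_P k[V]_{\m_P}$, its residue field is $k[V]_{\m_P}/\m_P k[V]_{\m_P}\cong k[V]/\m_P$, which $\mathrm{ev}_P$ identifies with $k$; hence the structural morphism $k\to\ov{\O_{V,P}}$ is the identity of $k$, in particular an isomorphism.
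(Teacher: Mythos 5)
Your proof is correct and follows essentially the same route as the paper's: both build the comparison between $k[V]_{\m_P}$ and $\O_{V,P}$ directly and hinge on the same key observation that a function vanishing on a basic open $D_V(t)\ni P$ yields the identity $ta=0$ in $k[V]$ via the tautological embedding of $k[V]$ into functions on $V$. The only (cosmetic) difference is the direction of the map --- the paper defines $\O_{V,P}\to k[V]_{\m_P}$ and uses this trick for well-definedness, whereas you define $k[V]_{\m_P}\to\O_{V,P}$ by the universal property of localisation and use it for injectivity.
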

\begin{proof}
Write $A=k[V]$ and $\m=\m_P$. Let $[f]\in \O_P=\O_{V,P}$ : there is a neighbourhood $U$ of $P$ in $V$ and $a,g\in A$ with $g(Q)\neq 0$ and $f(Q)=a(Q)/g(Q)$ for every $Q\in U$, whence $U\subs D_V(g)$ and we may assume that $U=D_V(g)$ with the same data. As $g\in A-\m$, define $\phi([f]):=a/g\in A_\m$ : if $[f]=[b/h]$ in $\O_P$, there exists a basic open neighbourhood $U'=D_V(l)\subs U$ of $P$ in $V$ on which $a/g\equiv b/h$; we have $D_V(l)\subs D_V(g)\cap D_V(h)=D_V(gh)$ and $l\in A -\m$, and the regular map defined by $(ah-bg)/gh$ on $D_V(l)=D_V(ghl)$ is zero, hence $ahl-bgl$ is also zero on $D_V(l)$, whereas for $Q\in V-D_V(l)$, we have $ghl(Q)=0$, and therefore $(ahl-bgl)(ghl)$ is zero on $V$, and hence in $A$. As $ghl\notin \m$, we get $ahl-bgl=0$ in $A_\m$, whereby $a/g=b/h$ in $A_\m$ and $\phi$ is well defined, and obviously a $k$-morphism. Finally, if $a/g\in A_\m$, the regular map defined by $a/g$ on $D_V(g)$ has $\phi([a/g,D_V(g)])=a/g$, and if $\phi([f])=0$ with $f$ defined as before on $D_V(g)$ by $a/g$ say, as $a/g=0$ in $A_\m$ there is $h\in A-\m$ with $ha=0$ in $A$, whence $[a/g]=[ah/gh|_{D_V(gh)}]=0$, and $\phi$ is an isomorphism. The isomorphism $\O_{V,P}\cong k[V]_{\m_P}$ induces a residual isomorphism $\ov{\O_{V,P}}\cong k[V]_{\m_P}/\m_P k[V]_{\m_P}$ over $k$, and by definition of $\m_P$ the structural morphism $k\to \ov{\O_{V,P}}$ is an isomorphism. 
\end{proof}

\noindent If $W\subs k^m$ is another algebraic subset, a \emph{regular morphism} from $V$ to $W$ is a map $f:V\to W$ such that there exist $f_1,\ldots,f_m\in k[V]$ with $f(P)=(f_1(P),\ldots,f_m(P))$ for all $P\in V$. Any regular morphism $f=(f_1,\ldots,f_m):V\to W$ induces in turn a $k$-algebra morphism $k[f]:k[W]\to k[V]$ in the usual way : to every $g=G+\ms I(W)\in k[W]$ we associate $G(F_1,\ldots,F_m)+\ms I(V)$, if $f_i=F_i+\ms I(V)$ for $i=1,\ldots,m$, and this defines a full and faithful functor $k[-]$ from the dual category of affine algebraic sets and regular morphisms into the category of reduced $k$-algebras of finite type. If $f:V\to W$ is a regular morphism of affine algebraic subvarieties, for every open subset $U\subs W$ and for every $s\in \O_W(U)$, we have $f\circ s\in \O_V(f^{-1}U)$, the map $f^\#_U:s\in\O_W(U)\mapsto f\circ s\in \O_V(f^{-1}U)$ is a morphism of $k$-algebras and the $f^\#_U$'s define a sheaf morphism $f^\#:\O_W\to f_*\O_V$. For every $P\in V$ we have a residual $k$-morphism $f^\#_P:\O_{W,f(P)}\to \O_{V,P}$ induced by the universal property of stalks considered as inductive limits, which is local by Lemma \ref{SMALEM}, so $(f,f^\#):(W,\O_W)\to (V,\O_V)$ is a morphism of locally ringed spaces in $k$-algebras, and we have a functor $f\mapsto (f, f^\#)$ from the dual category of affine algebraic subvarieties to locally ringed spaces in $k$-algebras. If $A$ is any ring, we note $Spm(A)$ the maximal spectrum of $A$, i.e. the set of all maximal ideals of $A$, implicitly topologised as usual by taking as basic open sets the subsets of the form $D(f)=\{\m\in Spm(A) : f\notin \m\}$; this is the \emph{Zariski topology on $A$}.

\part{Affine Equiresidual Algebraic Geometry}
\section{The equiresidual Nullstellensatz and its affine consequences}\label{EQNUL}
Hilbert's Nullstellensatz is the fundamental theorem of algebraic geometry over algebraically closed fields, or with points rational in such. For EQAG we have to generalise it over any commutative field $k$, which we tackle in this section with Theorem \ref{NSREL}. The key ingredient is to exclude the polynomials with no zero rational in $k$, and somewhat virtually work in $k$-algebras where they are invertible. We introduce the natural \og equiradical" of an ideal in finitely generated $k$-algebras, which allows us to extend the duality between affine subvarieties and what we call here essential ideals. Both considerations lead us to consider two kinds of algebras over $k$, $*$-algebras and essential algebras.

\subsection{The \"Aquinullstellensatz and the equiradical}\label{ERAD}
If $k$ is any field, by excluding the ideals of finitely generated $k$-algebras which have no zero rational in $k$ and using normic forms (see the Introduction) whenever $k$ is not algebraically closed, we may generalise Hilbert's Nullstellensatz and \cite{CLO}, Exercise 4.1.8 as the following \og \"Aquinullstellensatz", an equiresidual generalisation valid over any commutative field.

\begin{thm}[\"Aquinullstellensatz]\label{NSREL}
Let $k$ be any field, $A$ a finitely generated $k$-algebra, and $S$ the set of all $f\in A$ such that $\phi(f)\neq 0$ for all $k$-morphisms $\phi:A\to k$. Every ideal $I$ of $A$ disjoint from $S$ and maximal as such is a maximal ideal such that $A/I\cong k$ (and reciprocally).
\end{thm}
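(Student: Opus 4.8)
The plan is to show that being disjoint from $S$ and maximal as such is the same as being the kernel of a $k$-morphism $A\to k$, i.e. an ideal $I$ with $A/I\cong k$, such an $I$ being automatically maximal. The heart of the matter is the existence of a $k$-rational point, and this is where the normic forms of Proposition \ref{EXNORM0} intervene.

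First I would record two elementary facts. If $\phi:A\to k$ is any $k$-morphism, then $\ker\phi$ is disjoint from $S$, since $f\in\ker\phi$ forces $\phi(f)=0$ and hence $f\notin S$; moreover $\ker\phi$ is a maximal ideal with $A/\ker\phi\cong k$. Since $1\in S$, every ideal disjoint from $S$ is proper, so a maximal ideal with residue algebra $k$ is in particular maximal among the ideals disjoint from $S$; this already yields the reciprocal implication, and it shows that once $I$ (disjoint from $S$) is known to be contained in some $\ker\phi$, maximality forces $I=\ker\phi$ and we are done. Note too that $0\in I$ together with $I\cap S=\emptyset$ forces $0\notin S$, which means that at least one $k$-morphism $A\to k$ exists.

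The crux is therefore that every ideal $I$ disjoint from $S$ is contained in the kernel of some $k$-morphism $A\to k$, equivalently that $B:=A/I$ has a $k$-rational point. Suppose not (if $I=(0)$ then $B=A$ already carries the $k$-point found above, so we may assume $I\neq(0)$). As $A$ is a finitely generated algebra over a field it is Noetherian, so $I=(g_1,\ldots,g_m)$ for some $m\geq 1$. Assuming first that $k$ is not algebraically closed, Proposition \ref{EXNORM0} furnishes a normic form $N(X_1,\ldots,X_m)$ over $k$, homogeneous of some degree $d\geq 1$. I claim that $N(g_1,\ldots,g_m)\in I\cap S$, a contradiction. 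On one hand, every monomial of $N$ has positive degree, hence is divisible by some $X_i$, so $N(g_1,\ldots,g_m)\in(g_1,\ldots,g_m)=I$. On the other hand, for any $k$-morphism $\phi:A\to k$ we have $\phi\big(N(g_1,\ldots,g_m)\big)=N(\phi(g_1),\ldots,\phi(g_m))$ because $N$ has coefficients in $k$; were this zero, the defining property of a normic form would give $\phi(g_1)=\cdots=\phi(g_m)=0$, so that $\phi$ would factor through $B$ and produce the excluded $k$-point. Hence $\phi(N(g_1,\ldots,g_m))\neq 0$ for every $\phi$, i.e. $N(g_1,\ldots,g_m)\in S$, the desired contradiction. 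When $k$ is algebraically closed there are no normic forms in several variables, but then $B\neq 0$ (since $1\in S$ gives $1\notin I$), and the classical Nullstellensatz, via Zariski's lemma, already provides a maximal ideal of $B$ with residue field $k$, hence a $k$-point.

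Finally I would assemble the pieces. Given $I$ disjoint from $S$ and maximal as such, the crux produces $\phi:A\to k$ with $I\subseteq\ker\phi$; since $\ker\phi$ is itself disjoint from $S$, maximality yields $I=\ker\phi$, whence $I$ is maximal and $A/I\cong k$. Conversely, any such $I$ is disjoint from $S$ and maximal as such by the elementary facts above. I expect the only genuine obstacle to be the crux, namely ruling out an ideal that avoids $S$ yet whose vanishing locus carries no $k$-rational point; the normic form trick, which simultaneously packages the finitely many generators of $I$ into a single element lying in both $I$ and $S$, is precisely what removes it.
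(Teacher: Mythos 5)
Your proof is correct and follows essentially the same route as the paper's: you apply a normic form from Proposition \ref{EXNORM0} to a finite generating set of $I$, note that $N(g_1,\ldots,g_m)$ lies in $I$ and hence (you argue by contradiction where the paper argues directly) cannot lie in $S$, which yields a $k$-morphism $A\to k$ whose kernel contains $I$, and you conclude by maximality. The algebraically closed case is delegated to the classical Nullstellensatz exactly as in the paper, so the two arguments coincide in substance.
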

\begin{proof}
If $k$ is algebraically closed, then $S=k^\xx$ and the result is a consequence of Hilbert's Nullstellensatz, so we now suppose that $k$ is not algebraically closed. If $P\in k[\ov X]=k[X_1,\ldots,X_n]$ has a zero $[\ov f]=\ov f+I$ in $A/I$ for $\ov f\in A^n$, we have $P(\ov f)\in I$, and as $I\cap S=\emptyset$, there exists a $k$-morphism $\phi:A\to k$ such that $P(\phi(\ov f))=\phi(P(\ov f))=0$, and $P$ already has a zero in $k$. In particular, if $I=(P_1,\ldots,P_m)$ and $N(X_1,\ldots,X_m)$ is a normic form for $k$ (Definition \ref{NORMFORM}), as $N(P_1,\ldots,P_m)$ has a zero in $A/I$, it has a zero in $k$ by what precedes, and as $N$ is a normic form, $I$ itself has a zero in $k$, corresponding by evaluation to a $k$-morphism $e:A/I\to k$. Now the composite $k$-morphism $\phi :A\to A/I\to k$ has $I\subs Ker(\phi)$, and if $P\in Ker(\phi)$, by definition we have $P\notin S$, so $Ker(\phi)\cap S=\emptyset$ : by maximality of $I$ with this property, we have $I=Ker(\phi)$, and thus $e:A/I\to k$ is an isomorphism, and $I$ is maximal.
\end{proof}
\begin{rem}
i) Finiteness is needed in both cases, in the first for the application of Hilbert's Nullstellensatz, in the second for the application of a normic form to finitely many generators.\\
ii) For any $k$-algebra $A$ and ideal $I$ of $A$, if $\phi:A/I\cong k$ is an isomorphism, $\phi$ is necessarily the inverse of the structural morphism $k\to A/I$, so the ideals of the statement are exactly those for which $k\cong_k A/I$.
\end{rem}

\begin{cor}\label{CORNSREL}
If $k$ is any field, $V\subs k^n$ is an affine algebraic subvariety, and $S=\{g\in k[V] :\forall P\in V,g(P)\neq 0\}$, then an ideal $I$ of $k[V]$ has a zero in $V$ if and only if $I\cap S=\emptyset$.
\end{cor}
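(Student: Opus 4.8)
The plan is to deduce this from Theorem \ref{NSREL} (the \'equinullstellensatz) applied to the finitely generated $k$-algebra $A = k[V]$, once the set $S$ of the corollary has been identified with the set $S$ appearing in that theorem. So the first step I would carry out is to set up the dictionary between points of $V$ and $k$-morphisms: a $k$-algebra morphism $\phi : k[V] \to k$ is determined by the images $a_i := \phi(x_i)$ of the coordinate functions $x_i = X_i + \ms I(V)$, and such data defines a genuine morphism exactly when every $F \in \ms I(V)$ vanishes at $(a_1, \ldots, a_n)$, that is when $P := (a_1, \ldots, a_n)$ lies in $\ms Z(\ms I(V)) = V$ (the equality holding because $V$ is an algebraic set). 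This gives a bijection between $\mathrm{Hom}_k(k[V], k)$ and $V$ under which $\phi(f) = f(P)$ for every $f \in k[V]$; consequently the corollary's $S = \{g : \forall P \in V,\ g(P) \neq 0\}$ is literally the set $\{f \in A : \phi(f) \neq 0 \text{ for all } k\text{-morphisms } \phi\}$ of Theorem \ref{NSREL}.

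The forward implication is then immediate: were $P \in V$ a common zero of $I$ and $g \in I \cap S$, the element $g$ would satisfy $g(P) = 0$ (as $g \in I$) and $g(P) \neq 0$ (as $g \in S$), which is absurd; hence a zero of $I$ forces $I \cap S = \emptyset$.

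For the converse, assuming $I \cap S = \emptyset$, I would invoke Zorn's lemma on the family of ideals of $k[V]$ that contain $I$ and are disjoint from $S$: it is nonempty (it contains $I$) and stable under unions of chains, so it has a maximal element $J$. This $J$ is in fact maximal among all ideals disjoint from $S$, since any strictly larger such ideal would still contain $I$. Because $k[V]$ is finitely generated over $k$, Theorem \ref{NSREL} applies to $J$ and yields $k[V]/J \cong k$; composing the quotient map with this isomorphism gives a $k$-morphism $\phi$ with $\ker \phi = J$, hence by the dictionary an evaluation at some point $P \in V$. Since $I \subseteq J = \ker \phi$, every $f \in I$ satisfies $f(P) = \phi(f) = 0$, so $P$ is the desired zero of $I$ in $V$.

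All the real content sits in Theorem \ref{NSREL}, so I expect no serious obstacle here; the only delicate point is the faithful translation between the two descriptions of $S$, resting on the identity $\ms Z(\ms I(V)) = V$ and on the finite generation of $k[V]$ that makes the theorem applicable.
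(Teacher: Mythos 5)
Your proposal is correct and follows essentially the same route as the paper: identify the corollary's $S$ with the set $S$ of Theorem \ref{NSREL} via the bijection between points of $V$ and $k$-morphisms $k[V]\to k$, then produce an ideal containing $I$, disjoint from $S$ and maximal as such, and apply the theorem. The only cosmetic difference is that you invoke Zorn's lemma where the paper appeals to Noetherianity of $k[V]$, and you make explicit the (correct) observation that maximality relative to the family of ideals containing $I$ already gives maximality among all ideals disjoint from $S$.
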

\begin{proof}
If $I$ has a zero in $V$, we have $I\cap S=\emptyset$. Conversely, if $I\cap S=\emptyset$, by Noetherianity of $k[V]$ there exists an ideal $\m$ of $k[V]$, containing $I$, disjoint from $S$, and maximal with this property : by Theorem \ref{NSREL}, the structural morphism $k\to k[V]/\m$ is an isomorphism, so that $I$ has a zero in $V$.
\end{proof}

\noindent As a first significant geometric consequence of the \"Aquinullstellensatz (ANS), we may characterise the global sections of the sheaf of regular functions on an \emph{irreducible} affine algebraic subvariety, a result we will reinterpret and generalise in Section \ref{AFFSUB}.

\begin{prop}\label{CHSEC}
If $V\subs k^n$ is irreducible and $k\{V\}:=k[V]_S$, where $S=\{g\in k[V]:\forall P\in V,g(P)\neq 0\}$, then $\Gamma(V,\O_V)\cong k\{V\}$.
\end{prop}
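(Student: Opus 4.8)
The plan is to exploit the irreducibility of $V$ to realise every ring in sight as a subring of one fraction field, and then to deduce the equality $\Gamma(V,\O_V)\cong k[V]_S$ from the équinullstellensatz by an ideal-of-denominators argument. Write $A=k[V]$. Since $V$ is irreducible, $\I(V)$ is prime and $A$ is an integral domain; let $F=\mathrm{Frac}(A)$ be its fraction field. For each $P\in V$ the ideal $\m_P$ is prime (indeed maximal), so the small lemma \ref{SMALEM} identifies the stalk $\O_{V,P}$ with the subring $A_{\m_P}\subs F$, and likewise $A_S\subs F$ because $S$ is a multiplicative set of nonzero elements of the domain $A$. All the rings involved thus sit inside $F$, and the strategy is to establish the two identifications $\Gamma(V,\O_V)\cong\bigcap_{P\in V}A_{\m_P}$ and $\bigcap_{P\in V}A_{\m_P}=A_S$ inside $F$.

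First I would produce an injective $k$-algebra morphism $\Gamma(V,\O_V)\to F$ whose image is $\bigcap_{P\in V}A_{\m_P}$. Given a global regular section $s$, each germ $s_P$ lies in $\O_{V,P}\cong A_{\m_P}\subs F$; since any two nonempty open subsets of the irreducible $V$ are dense and meet, and $A$ is a domain, two local representatives $a/g$ and $b/h$ of $s$ satisfy $ah-bg=0$ on a dense open, hence $ah=bg$ in $A$, so all germs coincide as a single element $\xi\in F$. The assignment $s\mapsto\xi$ is then a $k$-morphism, injective since $\xi=0$ forces $s$ to vanish locally and hence everywhere, and $\xi$ lies in every $A_{\m_P}$ precisely because $s$ is regular at every $P$. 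Conversely, any $\xi\in\bigcap_P A_{\m_P}$ is, near each $P$, of the form $a/g$ with $g(P)\neq0$, so it defines a global regular function; thus the image is exactly $\bigcap_{P\in V}A_{\m_P}$.

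The heart of the proof is the identification $\bigcap_{P\in V}A_{\m_P}=A_S$, and this is where the équinullstellensatz enters. The inclusion $A_S\subs\bigcap_P A_{\m_P}$ is immediate, since $g\in S$ means $g\notin\m_P$ for all $P$. For the reverse inclusion, take $\xi\in\bigcap_P A_{\m_P}$ and consider its ideal of denominators $J=\{h\in A:h\xi\in A\}$. For each $P$, writing $\xi=a/g$ with $g(P)\neq0$ shows $g\in J$ and $g\notin\m_P$, so $J\not\subs\m_P$; hence $J$ admits no common zero on $V$, i.e. $\ms Z_V(J)=\emptyset$. By the contrapositive of Corollary \ref{CORNSREL} this forces $J\cap S\neq\emptyset$, so there is $g\in S$ with $g\xi\in A$, whence $\xi\in A_S$. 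Combining the two identifications yields $\Gamma(V,\O_V)\cong k\{V\}$.

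The main obstacle is exactly the reverse inclusion $\bigcap_P A_{\m_P}\subs A_S$: over an algebraically closed field one has $S=k^\xx$ and this intersection is simply $A$, whereas in general the global sections are genuinely larger, and only the équinullstellensatz guarantees that a denominator which is valid pointwise can be chosen uniformly in $S$. The remaining content is the standard gluing of germs into a single rational function, made legitimate here by irreducibility together with the small lemma.
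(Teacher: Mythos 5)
Your proof is correct and follows essentially the same route as the paper's: both embed $\Gamma(V,\O_V)$ into $k(V)$ via the density of nonempty open subsets of the irreducible $V$, and both invoke the équinullstellensatz to produce a single denominator lying in $S$. Your use of the conductor ideal $J=\{h\in A: h\xi\in A\}$ together with Corollary \ref{CORNSREL} is a slightly cleaner packaging of the paper's argument, which instead takes the ideal generated by the local denominators $v_P$ and writes out the resulting linear combination explicitly.
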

\begin{proof}
Let $f\in \Gamma(V,\O_V)$ and for every $P\in V$, $U_P\subs V$ an open neighbourhood of $P$ in $V$ such that $f|_{U_P}\equiv u_P/v_P$, with $u_P,v_P\in k[V]$. As $v_P\neq 0$ for all $P$, define a map $\phi:\Gamma(V,\O_V)\into k(V)$ by $f\mapsto u_P/v_P$ for any $P$; if $P,Q\in V$, as $V$ is irreducible $U_P$ is dense in $V$, so $O:=U_P\cap U_Q\neq \emptyset$ and for every $R\in O$ we have $f(R)=u_P(R)/v_P(R)=u_Q(R)/v_Q(R)$ and thus $u_Pv_Q|_O=u_Qv_P|_O$ : by density of $O\neq\emptyset$ in $V$, as the diagonal $\Delta_V$ is closed in $V\xx V$ we have $u_Pv_Q=u_Qv_P$ in $k[V]$, and thus $u_P/v_P=u_Q/v_Q$, so $\phi$ is well defined, and obviously a $k$-morphism. If $\phi(f)=u_P/v_P=0$, we have $u_P=0\in k(V)$, so $f|_{U_P}\equiv 0$ and as $f$ is continuous and $U_P$ is dense, as again $\Delta_V$ is closed we have $f\equiv 0$, so $\phi$ is injective : let $A$ be its isomorphic image in $k(V)$, by definition we have $k\{V\}\subs A$. Now let $I$ be the ideal of $k[V]$ generated by the $v_P$'s, $P\in V$ : if $I\cap S=\emptyset$, by the ANS (\ref{NSREL}) $I$ has a rational point $Q\in V$, for which $v_Q(Q)=0$, which is impossible, so there exist $v\in I\cap S$, $r\in\NN$, $P_1,\ldots,P_r\in V$ and $\alpha_1,\ldots,\alpha_r\in k[V]$ with $v=\sum_{i=1}^r \alpha_i v_{P_i}$, from which we get, in $A$, $\phi(f)v=\sum_i \alpha_i \phi(f) v_{P_i}=\sum_i \alpha_i u_{P_i}$, and therefore $\phi(f)=(1/v)\sum_i \alpha_i u_{P_i}\in k\{V\}$, and finally $k\{V\}=A$, i.e. $ \Gamma(V,\O_V)\cong k\{V\}$.
\end{proof}

\noindent If $k[\ov X]=k[X_1,\ldots,X_n]$ is a polynomial algebra and $I$ is an ideal of $k[\ov X]$, write $e_P:k[\ov X]\to k$ be the evaluation morphism at $P\in k^n$ : the elements of $\ms Z(I)$ are in bijection with the $k$-morphisms $e_P:k[\ov X]\to k$ such that $I\subs Ker(e_P)$ and with the same notations, by Theorem \ref{NSREL} the maximal ideals disjoint from $S$ and containing $I$ are thus the $Ker(e_P)$'s for $P\in \ms Z(I)$. It follows that $\ms I(\ms Z(I))$, the kernel of the product $k$-morphism $e_I:k[\ov X]\to k^{\ms Z(I)}$ of the $e_P$'s for $P\in \ms Z(I)$, is the intersection of all maximal ideals of $k[\ov X]$ containing $I$ and disjoint from $S$. Abstracting this notion we adopt the following 

\begin{defi}\label{EQUIRAD}
If $A$ is a $k$-algebra, say that a maximal ideal $\m$ of $A$ is \emph{essential} if the structural morphism $k\to A/\m$ is an isomorphism. If $I$ is any ideal of $A$, the \emph{equiresidual radical of $I$}, or \emph{equiradical of $I$}, noted $\erad I$, is the intersection of all essential maximal ideals of $A$ containing $I$. 
\end{defi}
\begin{rem}\label{EQUIREM}
i) If $A$ is \emph{of finite type} and $S=\{f\in A | \forall \phi:A\to k,\phi(f)\neq 0\}$ as before, then by Theorem \ref{NSREL} a maximal ideal $\m$ of $A$ is essential if and only if $\m\cap S=\emptyset$.\\
ii) If $A$ is the coordinate algebra $k[V]$ of some affine algebraic subvariety $V\subs k^n$, then the equiradical of an ideal $I$ of $A$ is nothing else than the ideal $\ms I(\ms Z_V(I))$ of $\ms Z_V(I)$.
\end{rem}

\noindent If $A$ is a $k$-algebra, the set $S$ as defined above is multiplicative; in case $A$ is \emph{of finite type}, by what precedes we may identify the essential maximal ideals of $A$ with the maximal ideals of $A_S$ by localisation. This leads to a transposition of the usual algebraic constructions surrounding classic algebraic geometry into this kind of localised algebras, which we begin to study here using a more convenient description, and allowing a profitable characterisation of the equiradical, thanks to the following notions which are inspired by Theorem 2 of \cite{MK} and Theorem 2.1 of \cite{B-H}.

\begin{defi}\label{ALGSIG}
i) The \emph{algebraic signature of $k$} is the set $\ms D$ of all polynomials in finitely many variables over $k$ which have no zero rational in $k$.\\
ii) If $A$ is a $k$-algebra, we note $M_A$ the multiplicative subset of all $D(\ov a)$ for $D\in \ms D$ and $\ov a\in A$, and we call $A_M:=A_{M_A}$ the \emph{canonical localisation of $A$}.
\end{defi}
\begin{rem}\label{REMCANLOC}
i) The algebraic signature is (only) an analogue of McKenna's \og determining sets" (\cite{MK}, Theorem 2). As with normic forms, both notions have a common natural generalisation using positive logic (see Remark \ref{REMNORM}).\\
ii) If $k$ is algebraically closed, then $\ms D=k^\xx$, so for every $k$-algebra $A$, we have $M_A\cong k^\xx$ and $A\cong A_M$.\\
iii) Of course, in general $A_M$ is different from $A$. For instance, if $k=\RR$ and $A=\RR[X]$ then $X^2+1$ is invertible in $A_M$ but not in $A$.
\end{rem}

\begin{lem}\label{CARSPE}
If $A$ is a finitely generated $k$-algebra and $J$ is an ideal of $A$, then $J\cap S=\emptyset\Iff J\cap M_A=\emptyset$.
\end{lem}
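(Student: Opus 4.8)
The plan is to first dispose of one implication by a trivial set inclusion, and then to prove the genuine direction by contraposition, using normic forms to fabricate an element of the algebraic signature.

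First I would record that $M_A\subs S$. Indeed, if $D\in\mathscr D$ and $\ov a$ is a tuple over $A$, then for any $k$-morphism $\phi:A\to k$ we have $\phi(D(\ov a))=D(\phi(\ov a))$, which lies in $k$ and is nonzero since $D$ has no rational zero over $k$ while $\phi(\ov a)$ is a tuple rational over $k$; hence $D(\ov a)\in S$. (Along the way one checks that $M_A$ really is multiplicative and contains $1$.) This inclusion yields the implication $J\cap S=\emptyset\Imp J\cap M_A=\emptyset$ at once. If $k$ is algebraically closed, then $\mathscr D=k^\times=S$ forces $M_A=S$ and the equivalence is immediate, so I may assume $k$ is \emph{not} algebraically closed and focus on the converse $J\cap M_A=\emptyset\Imp J\cap S=\emptyset$, which I would establish in contrapositive form: from $s\in J\cap S$ I must produce an element of $J\cap M_A$.

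To do this, present $A=k[\ov X]/\mathfrak a$ with $\ov X=(X_1,\ldots,X_n)$; by the Hilbert basis theorem $\mathfrak a=(r_1,\ldots,r_p)$ is finitely generated. Let $\pi$ denote the quotient map and lift $s$ to $F\in k[\ov X]$ with $\pi(F)=s$. Since the $k$-morphisms $A\to k$ correspond to the points of $\ms Z(\mathfrak a)$ and $s\in S$, the polynomial $F$ vanishes at no point of $\ms Z(\mathfrak a)$; equivalently, the system $r_1,\ldots,r_p,F$ has no common zero in $k^n$. I would then invoke Proposition \ref{EXNORM0} to choose a normic form $N(Y_1,\ldots,Y_{p+1})$ over $k$, homogeneous of some degree $d\geq 1$, and set $D:=N(r_1,\ldots,r_p,F)\in k[\ov X]$. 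Because $N$ is normic, for every $P\in k^n$ we have $D(P)=0$ if and only if $(r_1(P),\ldots,r_p(P),F(P))=\ov 0$, which never happens; thus $D$ has no rational zero, i.e. $D\in\mathscr D$.

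The conclusion is then bookkeeping. Evaluating $D$ at the generators gives $D(x_1,\ldots,x_n)=\pi(D)\in M_A$ by the very definition of the canonical localisation. Since $N$ is homogeneous of positive degree, each of its monomials involves one of the $Y_i$, so $D\in\mathfrak a+(F)$ and therefore $\pi(D)\in(s)\subs J$ (concretely $\pi(D)=c\,s^d$, where $c$ is the coefficient of $Y_{p+1}^d$ in $N$, using $\pi(r_j)=0$). Hence $\pi(D)\in J\cap M_A$, as required. The step I expect to be the crux is the choice $D=N(r_1,\ldots,r_p,F)$: the role of the relations $r_j$ — which are zero in $A$ — is precisely to annihilate, through the normic form, the \emph{extra} zeros of $F$ lying outside $\ms Z(\mathfrak a)$, thereby upgrading the mere condition ``$s$ has no rational zero on $A$'' into honest membership of $D$ in $\mathscr D$. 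Everything else (the inclusion $M_A\subs S$, the correspondence between morphisms and points, and the fact that $\pi(D)$ lands in $(s)$) is routine.
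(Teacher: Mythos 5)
Your treatment of the main (non-algebraically-closed) case is correct and is essentially the paper's own argument: lift $s$ to $F$, take generators $r_1,\ldots,r_p$ of the defining ideal $\mf a$, observe that $r_1,\ldots,r_p,F$ have no common rational zero, and push them through a normic form $N$ to manufacture $D=N(r_1,\ldots,r_p,F)\in\ms D$ with $\pi(D)=c\,s^d\in J\cap M_A$; your side remark that $c\neq 0$ is indeed automatic, since $c=0$ would make $(0,\ldots,0,1)$ a nontrivial zero of $N$. The easy direction via $M_A\subseteq S$ also matches the paper.

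The gap is in your dismissal of the algebraically closed case. There $\ms D=k^\times$ and hence $M_A=k^\times$ (as a subset of $A$), but it is \emph{false} that $S=k^\times$: for a finitely generated algebra over an algebraically closed field, $S$ is the set of elements lying outside every special maximal ideal, which by the Nullstellensatz is the set of elements outside every maximal ideal, i.e.\ $S=A^\times$; this is strictly larger than $k^\times$ in general (take $A=k[X,Y]/(XY-1)$, where $X\in S\setminus k^\times$). So ``$M_A=S$'' fails and the equivalence is not immediate from that identification. The implication $J\cap S\neq\emptyset\Imp J\cap M_A\neq\emptyset$ is still true in this case, but it genuinely requires Hilbert's Nullstellensatz: an element $s\in J\cap S$ lies in no maximal ideal of $A$, hence is a unit, hence $J=A$ and $1\in J\cap M_A$. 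This is exactly how the paper handles that case (writing $1=GF+\sum_i H_iP_i$ from $1\in\sqrt{(F,\mf a)}$). Once you replace the false identity $S=k^\times$ by this Nullstellensatz argument, your proof is complete and coincides with the paper's.
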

\begin{proof}
It suffices to prove it for $A=k[\ov X]=k[X_1,\ldots,X_n]/I$. If $f\in J\cap S$, write $f=F+I$ with $F\in k[\ov X]$, and let $P_i:i=1,\ldots,m$ be finitely many generators of $I$.  Suppose $k$ is algebraically closed, by definition of $S$ the ideal $(F,I)$ of $k[\ov X]$ has no zero in $k$; by Hilbert's Nullstellensatz we have $1\in \sqrt{(F,I)}$, so that $1=gf$ in $A$ with $g=G+I$, for some $G\in k[\ov X]$; it follows that $1\in J$, so $J=A$ and $J\cap M_A\neq\emptyset$. Suppose $k$ is not algebraically closed, and $N(Y,Z_1,\ldots,Z_m)$ is an appropriate normic form over $k$ : by definition of $S$, $F$ and the $P_i$'s have no common zero in $k$, so the polynomial $D=N(F,P_i:i)$ has no zero in $k$ and is therefore a member of $\ms D$. It follows that $g:=N(f,\ov 0)=N(F,P_i:i)+I=D(\ov X+I)$ is both a member of $J$ (as a $k$-linear combination of powers of $f$) and a member of $M_A$. Conversely, it suffices to show that $M_A\subs S$, so suppose $f\in M_A$ : we have $f=D(\ov g)$ for some $D\in \ms D$ and $\ov g=\ov G+I$; if $\phi:A\to k$ is a k-morphism, we have $\phi(f)=\phi(D(\ov g))=D(\phi(\ov g))\neq 0$ by definition of $\ms D$, so $f\in S$, and the lemma is proved. \end{proof}
\begin{rem}
Keeping in mind the first point of Remark \ref{EQUIREM}, we now see that the essential maximal ideals of a finitely generated $k$-algebra $A$ are the maximal ideals which are disjoint from $M_A$. Beware that this is not true in $k$-algebras in general (see Example \ref{CORFUNEX}), which we will tackle in Section \ref{SPECALGRAD}.
\end{rem}

\begin{prop}
For every finitely generated $k$-algebra $A$, we have $A_M\cong A_S$.
\end{prop}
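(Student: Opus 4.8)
The plan is to show that $M_A$ and $S$ have the same saturation in $A$, so that the canonical comparison maps between $A_M$ and $A_S$ are mutually inverse. The whole argument reduces, via the universal property of localisation, to two inclusions, one trivial and one supplied by Lemma \ref{CARSPE}.

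First I would record the easy inclusion $M_A\subs S$: if $f=D(\ov a)\in M_A$ with $D\in\ms D$ and $\ov a\in A^n$, then for every $k$-morphism $\phi:A\to k$ we have $\phi(f)=\phi(D(\ov a))=D(\phi(\ov a))\neq 0$, since $D$ has no zero rational in $k$; thus $f\in S$. Consequently the localisation map $A\to A_S$ sends every element of $M_A$ to a unit, and the universal property of $A_M$ yields a canonical $k$-morphism $\alpha:A_M\to A_S$ with $\alpha\lambda_M=\lambda_S$, writing $\lambda_M:A\to A_M$ and $\lambda_S:A\to A_S$ for the two localisation maps.

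The crux is to reverse this, i.e. to prove that every $f\in S$ becomes a unit in $A_M$, which is where finiteness enters through Lemma \ref{CARSPE}. I would apply that lemma to the principal ideal $J=Af$: since $f\in S$ we have $f\in J\cap S$, so $J\cap S\neq\emptyset$, and the contrapositive of Lemma \ref{CARSPE} gives $J\cap M_A\neq\emptyset$. Hence there is an element $g=af\in M_A$ with $a\in A$; in $A_M$ the image $g/1$ is a unit, and from $g/1=(a/1)(f/1)$ it follows that $f/1$ is itself a unit, with inverse $a/g$. Therefore $\lambda_M:A\to A_M$ sends all of $S$ into units, and the universal property of $A_S$ provides a canonical $k$-morphism $\beta:A_S\to A_M$ with $\beta\lambda_S=\lambda_M$.

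Finally, combining these identities gives $\beta\alpha\lambda_M=\beta\lambda_S=\lambda_M$ and $\alpha\beta\lambda_S=\alpha\lambda_M=\lambda_S$; by the uniqueness clause of the universal property (applied against the identity maps), $\beta\alpha=\mathrm{id}_{A_M}$ and $\alpha\beta=\mathrm{id}_{A_S}$, whence $A_M\cong A_S$. I expect the only substantive step to be the passage $S\subs\ov{M_A}$ extracted from Lemma \ref{CARSPE} in the third paragraph; the remainder is formal manipulation of localisations and uses no finiteness beyond that already assumed in Lemma \ref{CARSPE}.
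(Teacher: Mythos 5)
Your proof is correct, but it takes a genuinely different route from the one the paper gives for this proposition. The paper also begins with $M_A\subs S$ and reduces, via the universal properties, to showing that every $s\in S$ becomes a unit in $A_M$; but for that key step it goes through the maximal spectrum: it identifies $A_M^\xx$ with the complement of the union of the maximal ideals of $A_M$, then uses Lemma \ref{CARSPE} \emph{together with} the \'equinullstellensatz (Theorem \ref{NSREL}) to identify $Spm(A_M)$ with the special maximal ideals of $A$, and finally observes that an element of $S$ lies in no special maximal ideal. You instead apply Lemma \ref{CARSPE} directly to the principal ideal $J=Af$: since $f\in J\cap S$, the lemma produces an element $af\in J\cap M_A$, which is a unit in $A_M$ and hence forces $f$ to be one. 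This is more economical --- it bypasses Theorem \ref{NSREL} and all spectral considerations, using only the one lemma you cite --- and it is essentially the ``direct proof'' that the author sketches in the remark immediately following the proposition (there the witness in $J\cap M_A$ is made explicit as the normic expression $N(f,\ov 0)$, a nonzero constant times a power of $f$). What the paper's longer route buys is the identification of $Spm(A_M)$ with the special maximal ideals of $A$, a fact it reuses later; your argument proves the stated isomorphism with less machinery but does not record that by-product.
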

\begin{proof}
As $M_A\subs S$, it suffices to show by the universal properties of $A_M$ and $A_S$ that every member of $S$ becomes invertible in $A_M$. We have $A_M^\xx=\bigcap\{\m^c : \m\in Spm(A_M)\}$, and $\m\in Spm(A_M)\Iff \m=\n A_M$ for $\n$ disjoint from $M_A$ and maximal as such $\Iff \m=\n A_M$ for $\n$ disjoint from $S$ and maximal as such (by Lemma \ref{CARSPE}) $\Iff \m=\n A_M$ for $\n$ maximal and essential by Theorem \ref{NSREL}. Now let $s\in S$ : $s$ belongs to no essential maximal ideal of $A$, so by what precedes $s$ is invertible in $A_M^\xx$ and the proposition is proved.
\qed \end{proof}
\begin{rem}
A direct proof in the non-algebraically closed case along Lemma \ref{CARSPE} is instructive : if the members of $M_A$ are invertible, an element of $S$ has the form $F+I$ with $(F,I)$ having no zero in $k$, so $N(f,\ov 0)$ is in $M_A$, so is invertible; now $N(Y,\ov 0)$ is precisely the monomial where only the variable $Y$ corresponding to $f$ occurs, with a power $\geq 1$, so that inverting $N(f,\ov 0)$ entails inverting $f$.
\end{rem}

\noindent The following lemma is a generalisation of the existence of \og rational points" (i.e. morphisms to the base field) for any non-trivial finitely generated algebra over an algebraically closed field.

\begin{lem}\label{CARPRE}
If $A$ is a finitely generated $k$-algebra, then $A_M\neq 0$ if and only if there exists a $k$-morphism $A_M\to k$.
\end{lem}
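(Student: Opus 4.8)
The plan is to prove the two implications separately, with essentially all the content sitting in the forward direction. The backward implication is immediate: if a $k$-morphism $\psi:A_M\to k$ exists, then $\psi(1)=1\neq 0$ forces $1\neq 0$ in $A_M$, so $A_M\neq 0$. (Equivalently, no ring admits a morphism to the nonzero ring $k$ unless it is itself nonzero.)

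The conceptual crux I would isolate first is that, for the canonical localisation, $k$-morphisms out of $A_M$ are \emph{the same} as $k$-morphisms out of $A$. Indeed, by the universal property of localisation a $k$-morphism $\phi:A\to k$ factors (uniquely) through $A_M$ exactly when $\phi$ carries each element of $M_A$ to a unit of $k$, i.e. to a nonzero scalar. But an arbitrary element of $M_A$ has the shape $D(\ov a)$ with $D\in\ms D$ and $\ov a\in A$, so $\phi(D(\ov a))=D(\phi(\ov a))$, which is nonzero precisely because $D$, lying in the algebraic signature, has no zero rational in $k$. Hence every $k$-morphism $A\to k$ automatically extends to $A_M$, and the task reduces to producing a single $k$-morphism $A\to k$ from the hypothesis $A_M\neq 0$.

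For the forward direction I would then run the �quinullstellensatz. Since the constant $1$ belongs to $\ms D$ we have $1\in M_A$, so $A_M=0$ is equivalent to $0\in M_A$; thus $A_M\neq 0$ gives $0\notin M_A$, that is, the zero ideal $(0)$ is disjoint from $M_A$. Applying Lemma \ref{CARSPE} with $J=(0)$ transfers this to $(0)\cap S=\emptyset$, i.e. $0\notin S$. As $A$ is a finitely generated $k$-algebra it is Noetherian, so the zero ideal extends to an ideal $\m$ disjoint from $S$ and maximal as such; by Theorem \ref{NSREL}, $\m$ is a special maximal ideal with $A/\m\cong k$, and the composite $A\to A/\m\cong k$ is the $k$-morphism $A\to k$ we wanted, which by the previous paragraph factors through $A_M$.

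I do not expect a genuine obstacle: the statement is in effect a repackaging of the �quinullstellensatz through Lemma \ref{CARSPE}. The one point that must be handled with care — and the only place where the normic-form machinery is really doing the work — is the reduction of ``morphism out of $A_M$'' to ``morphism out of $A$'': it is exactly the defining property of $\ms D$ that makes canonical localisation harmless for rational points, and without it one could not conclude that a $k$-point of $A$ survives in $A_M$.
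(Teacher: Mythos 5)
Your proof is correct and follows essentially the same route as the paper: both reduce the statement to the équinullstellensatz (Theorem \ref{NSREL}) via Lemma \ref{CARSPE}. The only cosmetic difference is the direction of travel — the paper contracts a maximal ideal of $A_M$ to an ideal of $A$ maximal among those disjoint from $M_A$ and then observes $A_M/\m\cong (A/\p)_M\cong k$, whereas you produce a special maximal ideal of $A$ directly from $0\notin S$ and lift the resulting $k$-point through the canonical localisation by its universal property.
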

\begin{proof}
It suffices to prove the direct sense. If $A_M\neq 0$, let $\m$ be a maximal ideal of $A_M$, and let $\p:=\m\cap A$, an ideal of $A$ disjoint from $M_A$, and maximal as such. By Lemma \ref{CARSPE}, $\p$ is disjoint from $S$, and maximal as such, so by the \"Aquinullstellensatz \ref{NSREL} $\p$ is maximal and $A/\p\cong k$. As $A_M/\m\cong (A/\p)_M\cong k$ as $k$-algebras, $\m$ is the kernel of a morphism $A_M\to k$. \end{proof}

\noindent In order to characterise the equiradical in finitely generated $k$-algebras $A$, we are going to use canonical localisation at one element. If $a\in A$, we let $\Sigma_a$ be the multiplicative subset generated by all elements of the form $a^mD^\#(\ov b,a^n)$, for $D(\ov X)=\sum_{\ov i} a_{\ov i} \ov X^{\ov i}\in \ms D$ of degree $d$ say, and $D^\#(\ov X,Y)=\sum_{\ov i} a_{\ov i} \ov X^{\ov i} Y^{d-|\ov i|}=Y^dD[\ov X/Y]$ its homogenisation (with $|\ov i|=i_1+\ldots+i_n$), $m,n\in\NN$ and $\ov b$ an appropriate tuple from $A$. We note $A_{\<a\>}$ the localisation $\Sigma_a^{-1}A$.

\begin{lem}\label{CARUNLOC}
If $A$ is a $k$-algebra and $a\in A$, then the map $$c/a^mD^\#(\ov b,a^n)\mapsto (c/a^m)/a^{nd} D(\ov b/a^n)$$ is a $k$-isomorphism $A_{\<a\>}\cong (A_a)_M$.
\end{lem}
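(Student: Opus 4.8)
The plan is to recognise the displayed assignment as the canonical comparison map between two localisations of $A$ and then to show that both localisations invert, up to units, exactly the same elements, so that the universal property of localisation forces them to be isomorphic. Two elementary observations drive everything. First, $\Sigma_a$ contains $a$ and each $D^\#(\ov b,a^n)$ \emph{separately}: taking $D=1\in\D$ (so $d=0$ and $D^\#=1$) shows $a^m\in\Sigma_a$, in particular $a\in\Sigma_a$, while taking $m=0$ shows $D^\#(\ov b,a^n)\in\Sigma_a$. Second, expanding the defining identity $D^\#(\ov X,Y)=Y^dD(\ov X/Y)$ of the homogenisation and substituting gives, in $A_a$, the relation $D^\#(\ov b,a^n)=a^{nd}\,D(\ov b/a^n)$ for $d=\deg D$; moreover every tuple of $A_a$ can be brought to a common denominator and written $\ov b/a^n$, so that every generator $D(\ov\gamma)$ of $M_{A_a}$ is of this form. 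Granting these, the displayed map is exactly the map $\alpha\colon A_{\<a\>}\to(A_a)_M$ induced by the universal property, since $\alpha\big(c/(a^mD^\#(\ov b,a^n))\big)=c\cdot\big(a^{m+nd}D(\ov b/a^n)\big)^{-1}=(c/a^m)/a^{nd}D(\ov b/a^n)$.

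First I would construct $\alpha$. By the universal property of $\Sigma_a^{-1}A$ it suffices that every element of $\Sigma_a$ become a unit in $(A_a)_M$, and as units are closed under products it is enough to treat the generators. But $a^mD^\#(\ov b,a^n)=a^{m+nd}D(\ov b/a^n)$, where $a$ is already invertible in $A_a\subs(A_a)_M$ and $D(\ov b/a^n)\in M_{A_a}$ is invertible in $(A_a)_M$ by definition of the canonical localisation; hence each generator is a unit and $\alpha$ exists, its well-definedness on fractions being automatic.

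Next I would construct an $A$-algebra map $\beta\colon(A_a)_M\to A_{\<a\>}$ in the opposite direction. Since $a\in\Sigma_a$, the structural morphism $A\to A_{\<a\>}$ inverts $a$ and so factors uniquely through $A_a$; it remains to see that the resulting map $A_a\to A_{\<a\>}$ inverts $M_{A_a}$. A generator $D(\ov\gamma)$ with $\ov\gamma=\ov b/a^n$ maps to $D(\ov b/a^n)=D^\#(\ov b,a^n)/a^{nd}$, which is a unit in $A_{\<a\>}$ because $D^\#(\ov b,a^n)\in\Sigma_a$ (the case $m=0$) and $a\in\Sigma_a$. Hence $A_a\to A_{\<a\>}$ factors through $(A_a)_M$, yielding $\beta$.

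Finally, $\alpha$ and $\beta$ are $A$-algebra homomorphisms between localisations of $A$, so every element of either ring is a fraction $x/s$ with $x$ and $s$ in the image of $A$; since the composites $\beta\alpha$ and $\alpha\beta$ fix that image they fix every such fraction and are therefore the respective identities, whence $\alpha$ is the announced $k$-isomorphism. I expect the only genuinely delicate point to be the bookkeeping around the homogenisation identity $D^\#(\ov b,a^n)=a^{nd}D(\ov b/a^n)$ together with the common-denominator reduction of tuples in $A_a$: this is exactly what lets one trade the element $D^\#(\ov b,a^n)\in A$, inverted in passing to $A_{\<a\>}$, for the element $D(\ov b/a^n)\in M_{A_a}$, inverted in passing to $(A_a)_M$, and so recognise that both localisations invert the same elements up to units.
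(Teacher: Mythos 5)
Your proposal is correct and follows essentially the same route as the paper: both construct the two maps by checking that each localisation inverts the generators of the other's multiplicative set, using the homogenisation identity $D^\#(\ov b,a^n)=a^{nd}D(\ov b/a^n)$ in $A_a$, and then conclude that the composites are identities because they are $A$-algebra endomorphisms of localisations of $A$ (the paper phrases this last step via the uniqueness clause of the universal properties, which is the same argument).
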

\begin{proof}
Let $l_a:A\to A_a$ be the localisation at $a$, $l_M:A_a\to (A_a)_M$ the canonical localisation, $f_a:A\to A_{\<a\>}$ the localisation by $\Sigma_a$ and $\phi_a:A_a\to A_{\<a\>}$ the unique morphism such that $\phi_a\circ l_a=f_a$. For $D(\ov x)\in \ms D$ and $\ov b/a^m\in A_a$, we have $D(\ov b/a^m)=(1/a^{md})D^\#(\ov b,a^m)\in A_a$, which becomes invertible in $A_{\<a\>}$. By the universal property of $l_M$ (as an $A$-morphism), there exists a unique $A$-morphism $\phi:(A_a)_M\to A_{\<a\>}$ such that $\phi\circ l_M=\phi_a$, and by the universal property of $l_a$ this is the unique such that $\phi\circ l_M\circ l_a=f_a$. The other way round, any non-negative power $a^m$ of $a$ is invertible in $(A_a)_M$ and in $A_a$ we have $D^\#(\ov b,a^m)=a^{md}D(\ov b/a^m)$, which also becomes invertible in $(A_a)_M$. By the universal property of $f_a$, there exists a unique $\psi:A_{\<a\>}\to (A_a)_M$ such that $\psi\circ f_a=l_M\circ l_a$ and we get $\psi\phi l_Ml_a=\psi f_a=l_Ml_a$, whence $\psi\phi=1$ by universal property of localisations; likewise, we have $\phi\psi f_a=f_a$, whence $\phi\psi=1$, so that $\phi$ and $\psi$ are reciprocal isomorphisms. Now by definition, we have $\phi((c/a^m)/D(\ov b/a^n))=ca^{nd}/a^mD^\#(\ov b,a^n)$ and $\psi(c/a^mD^\#(\ov b,a^n))=(c/a^m)/a^{nd}D(\ov b/a^n)$. The maps are represented on the following diagram :
$$\begin{CD} A @>l_a>> A_a @>l_M>> (A_a)_M @= (A_a)_M\\
@Vf_aVV @V\phi_a VV @V\phi VV @AA\psi A\\
A_{\<a\>} @= A_{\<a\>} @= A_{\<a\>} @= A_{\<a\>}. \end{CD}$$
\end{proof}

\noindent The following theorem is the key ingredient to the characterisation of the rings of sections of regular functions over an open subset of an affine algebraic subvariety (Theorem \ref{MAG.3.6.a}).

\begin{thm}\label{CARERAD}
For a finitely generated $k$-algebra $A$ and an ideal $I$ of $A$, we have 
$\erad I=\{a\in A : I\cap \Sigma_a\neq\emptyset\}$. 
\end{thm}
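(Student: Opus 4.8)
The plan is to establish the two inclusions of the claimed equality. First I would record the reformulation that makes both sides comparable: by Definition \ref{EQUIRAD}, $a\in\erad I$ means that $a$ lies in every special maximal ideal $\m$ of $A$ containing $I$. Such $\m$ are exactly the kernels of the $k$-morphisms $\phi:A\to k$ vanishing on $I$ (a special $\m\supseteq I$ yields $\phi:A\to A/\m\cong k$ with $I\subseteq\ker\phi$, and conversely $\ker\phi$ is such an ideal), so $a\in\erad I$ is equivalent to $\phi(a)=0$ for every $k$-morphism $\phi:A\to k$ with $I\subseteq\ker\phi$.

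For $\{a:I\cap\Sigma_a\neq\emptyset\}\subseteq\erad I$ I would argue directly and without the localisation lemmas. Fix $a$ together with some $s\in I\cap\Sigma_a$, and let $\phi:A\to k$ be a $k$-morphism vanishing on $I$; I claim $\phi(a)=0$. If not, write $s$ as a product of generators $a^mD^\#(\ov b,a^n)$ of $\Sigma_a$ and evaluate $\phi$ factorwise. One has $\phi(a^m)=\phi(a)^m\neq0$, while $\phi(D^\#(\ov b,a^n))=D^\#(\phi(\ov b),\phi(a)^n)$ equals $\phi(a)^{nd}D(\phi(\ov b)/\phi(a)^n)$ for $d=\deg D$ when $n\geq1$, and equals $D(\phi(\ov b))$ when $n=0$; in either case it is nonzero because $D\in\ms D$ has no zero rational in $k$ and its argument is a tuple over $k$. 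Hence $\phi(s)\neq0$, contradicting $s\in I\subseteq\ker\phi$. Thus $\phi(a)=0$ for all such $\phi$, i.e. $a\in\erad I$ by the reformulation. Note this direction uses only the definition of $\ms D$ and of a homogenisation.

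For the reverse inclusion I would use the contrapositive and the earlier localisation results. Suppose $I\cap\Sigma_a=\emptyset$, and let $\ov a$ denote the image of $a$ in $A/I$. Then the image of $\Sigma_a$ in $A/I$ does not contain $0$; since $A\to A/I$ is onto, this image is precisely $\Sigma_{\ov a}$ (the generators descend), so the localisation $(A/I)_{\<\ov a\>}=\Sigma_{\ov a}^{-1}(A/I)$ is nonzero. Applying Lemma \ref{CARUNLOC} to the finitely generated algebra $A/I$ and the element $\ov a$ gives $(A/I)_{\<\ov a\>}\cong((A/I)_{\ov a})_M$, whence $((A/I)_{\ov a})_M\neq0$, and Lemma \ref{CARPRE} then furnishes a $k$-morphism $((A/I)_{\ov a})_M\to k$. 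Composing $A\to A/I\to(A/I)_{\ov a}\to((A/I)_{\ov a})_M\to k$ produces a $k$-morphism $\phi:A\to k$ that vanishes on $I$ and sends $a$ to a unit, so $\phi(a)\neq0$; by the reformulation this shows $a\notin\erad I$, completing the contrapositive.

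The substance of the argument lies entirely in the reverse inclusion, where from the mere non-vanishing of a localisation one must manufacture an actual special maximal ideal containing $I$ yet avoiding $a$; this is exactly what Lemmas \ref{CARUNLOC} and \ref{CARPRE} deliver, the first rewriting $A_{\<a\>}$ as a canonical localisation and the second guaranteeing a rational point whenever such a localisation is nonzero. The only delicate bookkeeping is the compatibility of $\Sigma_a$ with passage to the quotient, namely that its image in $A/I$ is $\Sigma_{\ov a}$, together with the standard equivalence between $I\cap\Sigma_a=\emptyset$ and $(A/I)_{\<\ov a\>}\neq0$; once these identifications are granted, the two lemmas close the proof.
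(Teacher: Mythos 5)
Your proposal is correct and follows essentially the same route as the paper: the inclusion $\{a : I\cap\Sigma_a\neq\emptyset\}\subseteq\erad I$ is the same computation (a generator $a^mD^\#(\ov b,a^n)$ cannot vanish under a $k$-morphism killing $I$ but not $a$, since $D\in\ms D$ has no rational zero), merely phrased through morphisms $\phi:A\to k$ instead of special maximal ideals, and the reverse inclusion uses Lemmas \ref{CARUNLOC} and \ref{CARPRE} exactly as the paper does, with your explicit identification of the image of $\Sigma_a$ in $A/I$ with $\Sigma_{\ov a}$ being the same isomorphism $\Sigma_a^{-1}(A/I)\cong (A/I)_{\<a+I\>}$ the paper invokes.
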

\begin{proof}
Suppose $a\notin \erad I$ : by definition there exists an essential maximal ideal $\m$ of $A$ containing $I$ and such that $a\notin \m$. For all $m,n\in\NN$, we have $a^m,a^n\notin\m$ and as $A/\m\cong k$, for all $D\in \ms D$ with degree $d$ say and appropriate $\ov b\in A$ we have $a^mD^\#(\ov b,a^n)\notin \m$ (otherwise $D([\ov b]/[\ov a^n])=(1/[a^{nd}])D^\#([\ov b],[\ov a^n])=0$ in $A/\m$, contradicting the choice of $D$ and $\m$), so $I\cap \Sigma_a=\emptyset$ by primality of $\m$. Conversely, if $I\cap \Sigma_a=\emptyset$, then $((A/I)_{a+I})_M\cong (A/I)_{\<a+I\>}$ (by Lemma \ref{CARUNLOC}) $\cong\Sigma_a^{-1}(A/I)\cong A_{\<a\>}/\Sigma_a^{-1} I\neq 0$, so there exists a $k$-morphism $(A/I)_{\<a+I\>}\to k$ by Lemma \ref{CARPRE}, since $(A/I)_{a+I}$ is finitely generated. Let then $\m$ be the kernel of the composite morphism $A\to A/I\to (A/I)_{\<a+I\>}\to k$ : we have $a\notin \m$, and as $\m$ is essential we get $a\notin \erad I$. 
\end{proof}

\subsection{Points and subvarieties in affine spaces}
In classic algebraic geometry (i.e. over an algebraically closed field $k$), we have a well known correspondance between algebraic subsets of $k^n$ and radical ideals of $k[X_1,\ldots,X_n]$ (\cite{HAG}, Corollary I.1.4). This is true in general if we replace radical ideals by the following notion :

\begin{defi}\label{ESSIDEAL}
If $A$ is a $k$-algebra, say that an ideal $I$ of a $k$-algebra $A$ is \emph{essential} if $I=\erad I$.
\end{defi}
\begin{rem} It is obvious that a maximal ideal $\m$ of $A$ is essential in the present sense if and only if it is in the sense of Definition \ref{EQUIRAD}, so non-trivial essential ideals of $A$ have points rational in $k$.
\end{rem}

\noindent We begin with the case of points : recall that if $(a_1,\ldots,a_n)\in k^n$ and $k[x_1,\ldots,x_n]=k[X_1,\ldots,X_n]/(X_1-a_1,\ldots,X_n-a_n)$, in $k[\ov x]$ we have $x_i=a_i$ for each $i$ and the structural morphism $k\to k[\ov x]$ is an isomorphism, so $(X_1-a_1,\ldots,X_n-a_n)$ is an essential maximal ideal.

\begin{lem}\label{BIJPNT}
For every $n\in\NN$, the map $P\in k^n\mapsto Ker(e_P)$ is a bijection between the points of the affine $n$-space and the essential maximal ideals of $A=k[X_1,\ldots,X_n]$, which have thus all the form $(X_1-a_1,\ldots,X_n-a_n)$ for $a_1,\ldots,a_n\in k$ (and the reciprocal bijection is given by $\m\mapsto \ms Z(\m)$).
\end{lem}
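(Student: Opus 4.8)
The plan is to verify in turn that $P\mapsto Ker(e_P)$ lands among the special maximal ideals, that it is injective, and that it is surjective, and then to identify the image explicitly and read off the inverse. First I would observe that each evaluation $e_P:k[\ov X]\to k$ is a surjective $k$-algebra morphism (it is the identity on constants), so $k[\ov X]/Ker(e_P)\cong k$ with the structural morphism as the isomorphism; hence $Ker(e_P)$ is indeed a special maximal ideal in the sense of Definition \ref{EQUIRAD}. Injectivity is immediate: if $Ker(e_P)=Ker(e_Q)$ with $P=(a_1,\dots,a_n)$ and $Q=(b_1,\dots,b_n)$, then $X_i-a_i\in Ker(e_P)=Ker(e_Q)$ forces $b_i-a_i=e_Q(X_i-a_i)=0$, so $P=Q$.

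The one step that genuinely spends the definition of a special ideal is surjectivity. Given a special maximal ideal $\m$, the composite $k\to k[\ov X]\onto k[\ov X]/\m$ is an isomorphism by hypothesis; call its inverse $\sigma$ and set $\phi:=\sigma\circ\pi:k[\ov X]\to k$, where $\pi$ is the quotient map. Then $\phi$ is a $k$-algebra morphism, so it restricts to the identity on constants and satisfies $\phi=e_P$ for $P:=(\phi(X_1),\dots,\phi(X_n))$, since two $k$-morphisms out of a polynomial algebra that agree on the generators $X_i$ coincide. As $\sigma$ is injective we have $Ker(\phi)=Ker(\pi)=\m$, whence $\m=Ker(e_P)$ and the map is onto.

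It remains to make the image concrete and exhibit the inverse. For $P=(a_1,\dots,a_n)$, the ideal $(X_1-a_1,\dots,X_n-a_n)$ is maximal (as recalled just before the statement) and is contained in the proper ideal $Ker(e_P)$, so the two coincide by maximality, giving the announced form. Finally, for such $\m=(X_1-a_1,\dots,X_n-a_n)$ a point $Q=(b_1,\dots,b_n)$ lies in $\ms Z(\m)$ exactly when $b_i=a_i$ for all $i$, so $\ms Z(\m)=\{P\}$, i.e. $\m\mapsto\ms Z(\m)$ recovers $P$ and is the reciprocal bijection. I do not expect a serious obstacle here: everything unwinds the definition of \emph{special} together with the already-noted maximality of $(X_1-a_1,\dots,X_n-a_n)$, and the only point deserving care is to make sure it is surjectivity, rather than the elementary injectivity, where the special hypothesis is actually used.
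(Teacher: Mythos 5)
Your proof is correct, but it takes a genuinely different route from the paper at the one step that matters, namely surjectivity. The paper argues geometrically: a special maximal ideal $\m$ satisfies $\m=\erad\m$, hence $\ms I(\ms Z(\m))=\m\neq k[\ov X]$ by the \'equinullstellensatz (via the discussion preceding Definition \ref{EQUIRAD}), so $\ms Z(\m)\neq\emptyset$, and any point $P\in\ms Z(\m)$ gives $\m=Ker(e_P)$ by maximality. You instead argue algebraically: invert the structural isomorphism $k\to k[\ov X]/\m$, compose with the quotient map to get a $k$-morphism $\phi:k[\ov X]\to k$ with kernel $\m$, and identify $\phi=e_P$ for $P=(\phi(X_1),\dots,\phi(X_n))$ by the universal property of the polynomial algebra. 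Your argument is more elementary and entirely self-contained --- it exposes the lemma as nothing more than the universal property of $k[\ov X]$ plus the definition of \emph{special}, with no appeal to Theorem \ref{NSREL}. What the paper's route buys is that the point $P$ is produced directly as an element of $\ms Z(\m)$, so the reciprocal bijection $\m\mapsto\ms Z(\m)$ falls out of the same computation, and the lemma is displayed as an instance of the \'equinullstellensatz machinery that organises the whole section; you recover the inverse by a separate (easy) computation of $\ms Z(X_1-a_1,\dots,X_n-a_n)$. Both are complete; your added verifications that $Ker(e_P)$ is special and that $\ms Z(\m)$ is the expected singleton are details the paper leaves to the parenthetical remark before the statement.
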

\begin{proof}
If $P=(a_1,\ldots,a_n)$, we have $(X_1-a_1,\ldots,X_n-a_n)= Ker(e_P)$ by what precedes; if $Q=(b_1,\ldots,b_n)$ and $Q\neq P$, it follows that $Ker(e_P)\neq Ker(e_Q)$, and the map is injective. As for surjectivity, if $\m$ is an essential maximal ideal of $A$, we have $\m=\erad \m$ by definition, so by Theorem \ref{NSREL} we have $\ms I(\ms Z(\m))=\erad\m=\m\neq A$, whence $\ms Z(\m)\neq\emptyset$ and there exists a zero $P$ of $\m$ in $k^n$, so that $Ker(e_P)\subs \m$; by maximality of $Ker(e_P)$, we get $Ker(e_P)=\m$ and the map is surjective. 
\end{proof}

\noindent In general, irreducible subvarieties have the same fruitful characterisation as in algebraically closed fields. Recall that if $V$ is an affine subvariety and $f\in k[V]$, we note $\ms Z_V(f)=\{P\in V : f(P)=0\}$.

\begin{lem}\label{CARIRRAFF}
If $V\subs k^n$ is an affine subvariety, then $V$ is irreducible if and only if $\Gamma(V,\O_V)$ is an integral domain, if and only if $k[V]$ is an integral domain.
\end{lem}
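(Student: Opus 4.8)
The plan is to prove the two equivalences separately, handling the purely topological equivalence \emph{$V$ irreducible $\Iff$ $k[V]$ is an integral domain} first by direct means, and then relating $\Gamma(V,\O_V)$ to $k[V]$. The essential point throughout is that, since $k[V]=k[\ov X]/\ms I(V)$, an element $f\in k[V]$ is zero precisely when the associated function vanishes on all of $V$, i.e. $\ms I(V)=(0)$ in $k[V]$; this replaces any appeal to Hilbert's Nullstellensatz and is what makes the argument work over an arbitrary field.

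For \emph{$V$ irreducible $\Imp k[V]$ a domain}, I would take $f,g\in k[V]$ with $fg=0$. Since $k$ is a field, for each $P\in V$ we have $f(P)=0$ or $g(P)=0$, so $V=\ms Z_V(f)\cup\ms Z_V(g)$; irreducibility forces one of these closed sets, say $\ms Z_V(f)$, to equal $V$, meaning $f$ vanishes everywhere, so $f\in\ms I(V)=(0)$ and $f=0$. For the converse, I would write a closed decomposition $V=\ms Z_V(J_1)\cup\ms Z_V(J_2)=\ms Z_V(J_1J_2)$ using the standard identity $\ms Z_V(J_1J_2)=\ms Z_V(J_1)\cup\ms Z_V(J_2)$; then $J_1J_2\subs\ms I(V)=(0)$, and since $k[V]$ is a domain one of $J_1,J_2$ is zero, whence $\ms Z_V(J_i)=V$ for the corresponding index, establishing irreducibility.

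It remains to insert $\Gamma(V,\O_V)$ into the chain. The map $k[V]\to\Gamma(V,\O_V)$ sending $f$ to the global regular function $P\mapsto f(P)$ is a $k$-algebra morphism, and it is injective because its kernel consists of the functions vanishing on $V$, i.e. $\ms I(V)=(0)$; as a subring of a domain is a domain, \emph{$\Gamma(V,\O_V)$ a domain $\Imp k[V]$ a domain}. For the reverse implication I would first use the equivalence just proved to pass from \og $k[V]$ a domain" to \og $V$ irreducible", which legitimises forming the fraction field $k(V)$; Proposition \ref{CHSEC} then gives $\Gamma(V,\O_V)\cong k\{V\}=k[V]_S\subs k(V)$, and a localisation of a domain, being a subring of its fraction field, is again a domain. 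The only delicate point to watch is the order of implications: Proposition \ref{CHSEC} already presupposes that $k(V)$ makes sense, so it must be invoked only after the topological implication \emph{irreducible $\Imp$ domain} has been secured independently, which is exactly why I prove that implication directly rather than through the sheaf of regular functions. No step presents a genuine obstacle beyond this bookkeeping.
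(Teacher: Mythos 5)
Your proof is correct, and it reaches the same conclusion by a slightly different circuit than the paper. The paper runs a single three-step cycle: \emph{$V$ irreducible $\Imp \Gamma(V,\O_V)$ a domain}, proved directly by taking $f,g\in\Gamma(V,\O_V)$ with $fg=0$ and observing that the zero sets of these (continuous) regular functions are closed and cover $V$; then \emph{$\Gamma(V,\O_V)$ a domain $\Imp k[V]$ a domain} via the embedding $k[V]\into\Gamma(V,\O_V)$; then \emph{$k[V]$ a domain $\Imp V$ irreducible} by essentially your ideal-theoretic argument (the paper picks a single nonzero $f\in I_1$ and multiplies against all of $I_2$ rather than invoking $\ms Z_V(J_1J_2)=\ms Z_V(J_1)\cup\ms Z_V(J_2)$, but this is the same computation). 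You instead close the purely topological equivalence with $k[V]$ first and then import $\Gamma(V,\O_V)$ through Proposition \ref{CHSEC} and the inclusion $k[V]_S\subs k(V)$. Both are valid, and your care about the order of implications (securing irreducibility before forming $k(V)$) is exactly the right precaution. What the paper's route buys is self-containment: it never needs Proposition \ref{CHSEC}, hence never needs the \'equinullstellensatz, for this lemma --- the whole statement is elementary topology plus the fact that regular functions are continuous. What your route buys is that the equivalence \emph{$V$ irreducible $\Iff k[V]$ a domain} is isolated as a standalone fact independent of any sheaf-theoretic input, and the passage to $\Gamma(V,\O_V)$ is reduced to general nonsense about subrings and localisations of domains. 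Either organisation is acceptable here.
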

\begin{proof}
Suppose $V$ is irreducible and $f,g\in J(V):=\Gamma(V,\O_V)$ are such that $fg=0$ : we have $V=\ms Z_V(f)\cup\ms Z_V(g)$ and as $V$ is irreducible and $\ms Z_V(f),\ms Z_V(g)$ are closed, we get $V=\ms Z_V(f)$ or $V=\ms Z_V(g)$, i.e. $f=0$ or $g=0$, and $J(V)$ is an integral domain, as well as $k[V]$, which embeds into it. Next, suppose $k[V]$ is an integral domain, and let $V=V_1\cup V_2$, with $V_1=\ms Z_V(I_1)$ and $V_2=\ms Z_V(I_2)$ and distinguish two cases : if $I_1=(0)$, then $V=V_1$, whereas if $I_1\neq (0)$, there exists $f\in I_1$, $f\neq 0$; for every $P\in V$ and $g\in I_2$ we now have $fg(P)=0$, so $fg=0$ and as $k[V]$ is integral, we have $g=0$, and therefore $I_2=(0)$ and $V=V_2$ : $V$ is irreducible.
\end{proof}

\noindent Let $I$ be an ideal of $k[X_1,\ldots,X_n]$ : we have $\ms I(\ms Z(I))=\erad I$ by the \"Aquinullstellensatz, so $\ms Z(I)=\ms Z(\erad I)$, and thus every algebraic set is the zero set of an essential ideal. The correspondance is thus given as the following

\begin{prop}
The map $I\mapsto \ms Z(I)$ induces an order-reversing bijection between essential ideals of $k[\ov X]$ and algebraic subsets of $k^n$, between essential prime ideals and irreducible algebraic sets, and between essential maximal ideals and points of $k^n$.
\end{prop}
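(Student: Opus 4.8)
The plan is to establish the first bijection by combining the classical incidence maps $\ms Z$ and $\ms I$ with the �quinullstellensatz, and then to verify that the two restrictions are inherited from the correspondences already proved in Lemmas \ref{BIJPNT} and \ref{CARIRRAFF}. First I would show that $\ms Z$ and $\ms I$ are mutually inverse on the relevant classes. Starting from a special ideal $I$ of $k[\ov X]$, Definition \ref{SPEALG} gives $I=\erad I$, and the remark immediately preceding the statement records that $\ms I(\ms Z(I))=\erad I$ by Theorem \ref{NSREL}; hence $\ms I(\ms Z(I))=I$, so $\ms I\circ\ms Z$ is the identity on special ideals. In the other direction, for an arbitrary algebraic set $V=\ms Z(J)$ one has $\ms Z(\ms I(V))=V$ by the usual formal properties of the Galois connection (the inclusion $V\subs \ms Z(\ms I(V))$ is immediate, and the reverse inclusion follows since $J\subs \ms I(\ms Z(J))=\ms I(V)$ forces $\ms Z(\ms I(V))\subs \ms Z(J)=V$). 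Moreover $\ms I(V)$ is always special: it equals $\ms I(\ms Z(\ms I(V)))=\erad(\ms I(V))$ by the same remark. Thus $\ms Z$ is a bijection from special ideals onto algebraic sets with inverse $\ms I$, and both maps reverse inclusions by construction.

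Next I would treat the restriction to maximal and special ideals. By Lemma \ref{BIJPNT} the special maximal ideals of $k[\ov X]$ are exactly the $Ker(e_P)=\ms I(\{P\})$ for $P\in k^n$, and that lemma already identifies $\ms Z$ and $\ms I$ as the reciprocal bijections between these ideals and the points of $k^n$; since a point is precisely a minimal nonempty algebraic set, this is the announced restriction. For the irreducible case I would argue that, under the bijection, $\ms Z(I)$ is irreducible if and only if the ideal $I$ is prime. Writing $A=k[\ov X]$ and using that $\ms Z(I)=\ms Z(\erad I)$, one has $k[\ms Z(I)]=A/\ms I(\ms Z(I))=A/\erad I=A/I$ for a special ideal $I$; by Lemma \ref{CARIRRAFF}, $\ms Z(I)$ is irreducible if and only if this coordinate algebra $A/I$ is an integral domain, i.e. if and only if $I$ is prime. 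Conversely a prime ideal is automatically special, since $\erad I$ is an intersection of maximal ideals containing $I$ and for prime $I$ one checks that $I=\erad I$ via the same integral-domain criterion applied to $A/I=k[\ms Z(I)]$; so the special prime ideals are exactly the prime ideals lying in the image, matching them with irreducible algebraic sets.

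The one genuine subtlety, and the step I expect to require the most care, is confirming that \emph{every} prime ideal occurring here is automatically special, so that the phrase \og prime and special" is not vacuously restrictive but genuinely picks out all irreducible sets. Over a non-algebraically closed $k$ a prime ideal need not be radical-closed in the classical sense relevant to $k$-points, so I cannot simply quote the algebraically closed argument; instead I would use that $A/I$ is a domain and is finitely generated over $k$, deduce by Lemma \ref{CARIRRAFF} (applied to $V=\ms Z(I)$, noting $\ms Z(I)\neq\emptyset$ exactly when $I$ is proper and $I\cap S=\emptyset$ in the sense of Corollary \ref{CORNSREL}) that $\ms Z(I)$ is irreducible, and then invoke $\ms I(\ms Z(I))=\erad I$ to get $\erad I=\ms I(\ms Z(I))\supseteq I$ with equality forced by primality together with the fact that $I$ is the kernel of $A\to A/I$, a domain whose special maximal ideals separate points. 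Once this is pinned down the three bijections nest compatibly and the order-reversal passes to each restriction for free.
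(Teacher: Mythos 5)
Your first two paragraphs prove the proposition correctly and by essentially the same route as the paper: the bijection on special ideals comes from $\ms I(\ms Z(I))=\erad I$ (Theorem \ref{NSREL}) together with the formal Galois-connection identities, the identification $k[\ms Z(I)]=k[\ov X]/I$ for special $I$ combined with Lemma \ref{CARIRRAFF} handles the irreducible case, and Lemma \ref{BIJPNT} handles points. Had you stopped there, the proof would be complete.

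The third paragraph, however, sets out to prove something false, and the argument you sketch for it does fail. The phrase ``prime and special ideals'' in the statement means ideals that are simultaneously prime \emph{and} special; it is not claimed, and it is not true, that every prime ideal --- even one with nonempty zero locus --- is special. Take $k=\RR$ and $I=(X^2+Y^2)\subs \RR[X,Y]$: this is prime ($X^2+Y^2$ is irreducible over $\RR$ and $\RR[X,Y]$ is a UFD), $\ms Z(I)=\{(0,0)\}$ is nonempty and irreducible, yet $\erad I=\ms I(\{(0,0)\})=(X,Y)\neq I$, so $I$ is not special. Your proposed mechanism --- ``equality forced by primality together with the fact that $I$ is the kernel of $A\to A/I$, a domain whose special maximal ideals separate points'' --- breaks exactly here: the special maximal ideals of $\RR[X,Y]/(X^2+Y^2)$ do not separate its points, since there is only one of them, lying over the origin. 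Similarly $(X^2+1)\subs\RR[X]$ is prime, indeed maximal, meets $S$, and is not special. So the second bijection genuinely restricts to those prime ideals which happen to be special (equivalently, the ideals $\ms I(W)$ for $W$ irreducible); delete the third paragraph and what remains is a correct proof.
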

\begin{proof}
Suppose $I,J$ are essential and $\ms Z(I)=\ms Z(J)$ : we have $\ms I(\ms Z(I))=\ms I(\ms Z(J))$, so $I=\erad I=\erad J=J$ by what precedes, and the map is injective on essential ideals. If $V=\ms Z(I)\subs k^n$ is an algebraic subset, we have seen that $V=\ms Z(\erad I)$ so the map is surjective on essential ideals, it is a bijection. By Lemma \ref{CARIRRAFF}, an essential ideal $I$ is prime if and only if $\ms Z(I)$ is irreducible, which proves the second part of the statement. Finally, an essential ideal $I$ is maximal if and only if it is an essential maximal ideal, if and only if $\ms Z(\m)$ is a point by Lemma \ref{BIJPNT}.
\end{proof}
\begin{rem}
The picture may be completed as usual by a description of the topological closure $\ov S=\ms Z(\ms I(S))$ of any subset $S\subs k^n$, and by the relativisation of the correspondance to any affine subvariety.
\end{rem}

\noindent As in the classic case, the coordinate algebra functor $k[-]$ provides a duality if we replace finitely presented reduced $k$-algebras by their generalisation :

\begin{prop}\label{AFFDUAL}
The functor $k[-]$ is an duality between the categories of affine algebraic subvarieties of $k$ and essential $k$-algebras of finite type.
\end{prop}
\begin{proof}
We focus on essential surjectivity, so let $A$ be an essential $k$-algebra of finite type, isomorphic to an algebra $k[\ov X]/I$, where $\ov X=(X_1,\ldots,X_n)$. Consider the affine algebraic subvariety $V:=\ms Z(I)\subs k^n$ : we have $k[V]:=k[\ov X]/\ms I(V)=k[\ov X]/\erad I$ (by Theorem \ref{NSREL}) $=k[\ov X]/I$ by Lemma \ref{CARSPEQUO} because $A$ is essential; in short, $A$ is isomorphic to $k[V]$, and $k[-]$ is a duality.
\end{proof}

\subsection{Essential $*$-algebras of finite $*$-type}\label{STARESS}
Theorem \ref{CARERAD} provides a practical characterisation of the equiradical in finitely generated $k$-algebras. Now canonical localisations of $k$-algebras appear in Lemma \ref{CARPRE}, and these will prove to be the \og good" algebras to work with as rings of sections of affine sheaves of regular fonctions, and more generally in EQAG.
The following concept captures the intrinsic algebraic properties of canonical localisations.

\begin{defi}
Say that a $k$-algebra $A$ is a \emph{$*$-algebra (over $k$)} if every element of $M_A$ is invertible in $A$.
\end{defi}
\begin{ex}\label{CORFUNEX}
For every irreducible affine algebraic subvariety $V\subs k^n$, the function field $k(V)$ is a $*$-algebra : if $D\in\ms D$ and $\ov f/g\in k(V)$, we have $D(\ov f/g)=D^\#(\ov f/g,1)=(1/g^d)D^\#(\ov f,g)$, and as $D^\#(\ov f,g)\neq 0$ by Lemma \ref{CARSPESIG}, we have $D(\ov f/g)\in k(V)^\xx$. 
\end{ex}

\noindent More explicitly, a $k$-algebra $A$ is a $*$-algebra if and only if for all $D(\ov x)\in\ms D$ and $\ov a\in A$, there exists $b\in A$ such that $b.D(\ov a)=1$. If $k$ is algebraically closed, a $*$-algebra is thus exactly a $k$-algebra, but in general the two concepts do not coincide, as illustrated by Remark \ref{REMCANLOC}(iii) and the following 

\begin{lem}\label{CARSTAR}
If $A$ is a $k$-algebra and $l_M:A\to A_M$ its canonical localisation, then $A$ is a $*$-algebra if and only if $l_M$ is an isomorphism. In particular, $A_M$ is a $*$-algebra.
\end{lem}
\begin{proof}
If $A$ is a $*$-algebra, then for every $k$-morphism $f:A\to B$ with $f(M_A)\subs B^\xx$, there exists a unique $g:A\to B$ such that $g1_A=f$, so $1_A$ has the universal property of $l_M$, which is therefore an isomorphism. Conversely, it suffices to show that $A_M$ in general is a $*$-algebra. Let thus $D(\ov x)\in \ms D$ and $\ov a/m\in A_M$ an appropriate tuple : $m$ has the form $D_1(\ov a_1)$ for $D_1\in \ms D$ and $D(\ov a/m)=D^\#(\ov a/m,1)=(1/m^d)D^\#(\ov a,m)$. Let $D_2(\ov x,\ov x_1)=D^\#(\ov x,D_1(\ov x_1))$ : if $\ov b,\ov b_1\in k$, we have $D_1(\ov b_1)\neq 0$, hence $D^\#(\ov b,D_1(\ov b_1))\neq 0$ (otherwise $D(\ov b /D_1(\ov b_1))=(1/D_1(\ov b_1)^d)D^\#(\ov b,D_1(\ov b_1))=0$), so $D_2\in \ms D$, and therefore $m^dD(\ov a/m)=D_2(\ov a,D_1(\ov a_1))\in A_M^\xx$, whence $D(\ov a/m)\in A_M^\xx$ : $A_M$ is a $*$-algebra. 
\end{proof}
\begin{rem}
i) The key ingredient of the proof is borrowed from \cite{B-H}, Theorem 2.1.\\
ii) By the properties of localisation, every morphism $\phi:A\to B$ of $k$-algebras has a canonical localication $\phi_M:A_M\to B_M$. Canonical localisation is thus a functor from $k$-algebras to $*$-algebras, left adjoint to the forgetful functor. This last category is locally finitely presentable (see \cite{LPAC}), but we will not go into the category-theoretic details here, though we will use the notion of a $*$-algebra of finite type (as such). 
\end{rem}

\noindent The following very simple concept, inspired by the first order theory of quasivarieties (Sections 9.1 and 9.2 of \cite{HMT}), generalises reduced algebras of finite type over (algebraically closed) fields.

\begin{defi}\label{SPEALG}
Say that a $k$-algebra $A$ is \emph{essential} if $A$ embeds as a $k$-algebra into a power of $k$. 
\end{defi}


\begin{lem}\label{CARSPEQUO}
If $A$ is a $k$-algebra and $I$ an ideal of $A$, then $A/I$ is essential if and only if $I$ is essential.
\end{lem}
\begin{proof}
Suppose $A/I$ is essential : there exists a set $S$ and an embedding $\phi:A/I\into k^S$ of $k$-algebras. If $a\in A-I$, there exists $s\in S$ such that $p_s\circ \phi(a+I)\neq 0$, where $p_s: k^S\to k$ is the $s$-th projection. It follows that $a\notin  \m:=Ker(p_s\circ \phi\circ \pi_I)$, with $\pi_I:A\onto A/I$ the canonical projection; as $\m$ is essential, we get $a\notin \erad I$, so $I=\erad I$. Conversely, if $I=\erad I$, then by definition of $\erad I$ the quotient $A/I=A/\erad I$ embeds into $k^S$, where $S$ is the set of essential maximal ideals containing $I$, so $A/I$ is essential.
\end{proof}

\noindent The following property of essential ideals is an analogue of the characterisation of radical ideals.

\begin{lem}\label{CARSPESIG}
If $I$ is an essential ideal of a $k$-algebra $A$, then for all $D(\ov x)\in \ms D$, $\ov a,b\in A$ and $m,n\in\NN$ such that $b^mD^\#(\ov a,b^n)\in I$, we have $b\in I$. Conversely, if this property holds and $A$ is \emph{finitely generated}, then $I$ is essential. 
\end{lem}
\begin{proof}
Suppose $I$ is essential : by Lemma \ref{CARSPEQUO}, there exists an embedding $\phi:A/I\into k^S$ for a set $S$, so let $D(\ov x)\in \ms D$, $\ov a,b\in A$ and $m,n\in\NN$ be such that $b^mD^\#(\ov a,b^n)\in I$ : write $\ov a_I=\ov a+I$, $b_I=b+I$, for each $s\in S$ we have $\phi b_I(s)^mD^\#(\phi\ov a_I(s),\phi b_I(s)^n)=0$ in $k$. If $\phi b_I(s)\neq 0$, then $D(\phi\ov a_I(s)/\phi b_I(s)^n)=\phi b_I(s)^{-nd}D^\#(\phi \ov a_I(s),\phi b_I(s)^n)=0$ in $k$, which contradicts the definition of $\ms D$, so $\phi b_I(s)=0$ for all $s$, whence $\phi(b_I)=0$ and therefore $b_I=0$, i.e. $b\in I$.
Conversely, if the property holds and $b\notin I$, let $D(\ov x)\in \ms D$, $\ov a\in A$ and $n\in\NN$ such that $D([\ov a]/[b^n])=0$ in $B:=(A/I)_{[b]}$ : we get $D^\#([\ov a],[b^n])=0$, thus $\exists m\in\NN$ such that $b^m D^\#(\ov a,b^n)=0$ in $A/I$, whence $b\in I$ by hypothesis. This contradicts our assumption, so we must have $D([\ov a]/[b^n])\neq 0$, therefore $0\notin M_B$, whence $B_M\neq 0$ and by Lemma \ref{CARPRE} there exists a $k$-morphism $B_M\to k$ : the kernel $\m$ of the composite morphism $A\to (A/I)_{[b]}\to B_M\to k$ is essential and contains $I$ but not $b$, so $b\notin \erad I$. We conclude that $I=\erad I$, i.e. that $I$ is essential.
\end{proof}
\begin{rem}\label{HOMSIGN}
Essential ideals may as well be characterised in \emph{finitely generated} $k$-algebras by the \emph{homogeneous signature} of $k$ (Definition \ref{DEFHOMSGN}), which we will rather introduce in Section \ref{SPECALGRAD}, where we get rid of the finiteness hypotheses.
\end{rem}

\noindent Canonical localisations of essential finitely generated $k$-algebras remain essential.

\begin{lem}\label{SIMPESS}
If $A$ is an essential $k$-algebra, then the elements of $M_A$ are simplifiable.
\end{lem}
\begin{proof}
By hypothesis, there exists an embedding $\phi:A\into k^S$ into a power of $k$. Let $m\in M_A$ and $f\in A$ such that $mf=0$ : for each $s$, we have $\phi(mf)(s)=0$, so that $\phi(f)=0$ because $\phi(m)(s)\in k^\xx$ for each $s$. It follows that $f=0$, so $m$ is simplifiable.
\end{proof}

\begin{prop}\label{ESSTAR}
If $A$ is an essential $k$-algebra of finite type, then the canonical localisation $l_M:A\to A_M$ is injective and $A_M$ is essential.
\end{prop}
\begin{proof}
By Lemma \ref{SIMPESS}, $l_M$ is a localisation by simplifiable elements, it is therefore injective.
Secondly, let $\phi:A\into k^S$ be an embedding into a power of $k$ :  every $m\in M_A$ has $\phi(m)\in (k^S)^\xx$, and by universal property of $A_M$ there exists (a unique $k$-morphism) $\psi:A_M\to k^S$ such that $\psi l_M=\phi$, and $\psi$ is injective as $l_M$ is.
\end{proof}

\noindent All this and the forthcoming characterisation of local sections of affine sheaves of regular functions shows that $*$-algebras obtained by canonical localisation of finitely generated $k$-algebras play a special role in the theory.

\begin{defi}\label{FINSTARTYP}
Say that a $*$-algebra $A$ has \emph{finite $*$-type} if it is isomorphic to the canonical localisation of a finitely generated $k$-algebra.
\end{defi}
\begin{rem}
The $*$-algebras of finite $*$-type are exactly the finitely generated $*$-algebras in the category of $*$-algebras, but they are not finitely generated \emph{as $k$-algebras} in general.
\end{rem}

\begin{cor}\label{CARSPESIG'}
If $A$ is a $*$-algebra of finite $*$-type, then $A$ is essential if and only if for all $D\in\ms D$, $\ov a,b\in A$ and $m,n\in\NN$, we have $b=0$ whenever $b^m D^\#(\ov a,b^n)=0$.
\end{cor}
\begin{proof}
We may assume that $A=B_M$, with $B$ a $k$-algebra of finite type. The direct sense is a particular case of Lemma \ref{CARSPESIG}, so assume the property holds and let $D\in\ms D$, $\ov a,b\in B$, $m,n\in\NN$ be such that $b^m D^\#(\ov a,b^n)=0$ in $B$ : this is also true in $A$, so $b/1=0$ and thus $\mu b=0$ in $B$ for some $\mu\in M_A$; by Lemma \ref{SIMPESS} we get $b=0$, and by Lemma \ref{CARSPESIG} we conclude that $B$ is essential. By Proposition \ref{ESSTAR}, $A_M$ is essential.
\end{proof}

\section{Affine algebraic equivarieties}\label{SEQVAR}
In this section we study the basic geometric objects of EQAG, which are the affine algebraic equivarieties. Their archetypes are the affine subvarieties $V\subs k^n$ with their sheaves of regular functions, which rings of global sections $\Gamma(V,\O_V)$ are no longer, in the general case, isomorphic to the coordinate rings $k[V]$, but rather to their canonical localisations (Theorem \ref{MAG.3.6.a}). This means that the duality of Proposition \ref{AFFDUAL} cannot be extended to the abstract geometric setting, and must be replaced with a duality with essential $*$-algebras of finite $*$-type.

\subsection{Sheaves of regular functions}\label{AFFSUB}
Thanks to Theorem \ref{NSREL} and the algebraic concepts introduced in Section \ref{STARESS}, we are now able to investigate the sheaf of regular functions on an affine algebraic subvariety $V$ of $k^n$ - which has a more complex structure than in classic algebraic geometry - while giving an algebraic interpretation of it in the present setting.
If $h,h'\in k[V]$, we have $$(*)\ D_V(h)\subs D_V(h') \Iff \ms Z_V(h)\supseteq \ms Z_V(h') \Iff \erad{(h)}\subs \erad{(h')} \Iff h\in \erad{(h')} \Iff \Sigma_h\cap (h')\ne\emptyset$$ (by definition of $\erad{}$ and Theorem \ref{CARERAD}). Now let $g,h\in k[V]$ and $\alpha\in \Sigma_h$ : if $P\in D_V(h)$, one checks that $h(P)\alpha(P)\neq 0$, so $g/h\alpha$ defines a regular function on $D_V(h)$, i.e. an element of $\O_V(D_V(h))$. We record the obvious

\begin{lem}\label{MAG.3.5}
The map $P\in D_V(h)\mapsto g(P)/h\alpha(P)$ is zero on $D_V(h)$ if and only if $gh=0$ in $k[V]$.
\end{lem}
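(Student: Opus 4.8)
The plan is to translate both sides of the equivalence into pointwise conditions on the points of $V$, after which the statement follows by an immediate case distinction. I would rely on two facts. First, since $k[V]=k[\ov X]/\ms I(V)$ and $\ms I(V)$ is exactly the set of polynomials vanishing on $V$, an element of $k[V]$ vanishes precisely when its induced function on $V$ is identically zero; in particular $gh=0$ in $k[V]$ is equivalent to $g(P)h(P)=0$ for every $P\in V$. Secondly, $\alpha(P)\neq 0$ for every $P\in D_V(h)$ (as checked just before the statement), so on $D_V(h)$ we have $h(P)\alpha(P)\neq 0$, and the value $g(P)/h(P)\alpha(P)$ vanishes exactly when $g(P)=0$.

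For the forward direction, I would assume the regular function $g/h\alpha$ is zero on $D_V(h)$ and show $g(P)h(P)=0$ for all $P\in V$. For $P\in D_V(h)$ the nonvanishing of $h(P)\alpha(P)$ forces $g(P)=0$, hence $g(P)h(P)=0$; for $P\in V-D_V(h)=\ms Z_V(h)$ one has $h(P)=0$, so $g(P)h(P)=0$ trivially. By the first fact this gives $gh=0$ in $k[V]$.

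For the converse, assuming $gh=0$ in $k[V]$, i.e. $g(P)h(P)=0$ for every $P\in V$, I would note that on $D_V(h)$ the factor $h(P)$ is nonzero, forcing $g(P)=0$ and therefore $g(P)/h(P)\alpha(P)=0$; so the map is zero on $D_V(h)$.

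I expect no genuine obstacle here: the argument is pure pointwise logic. The only two points needing care are the identification of equality in $k[V]$ with equality of the induced functions on $V$ (immediate from the definition of $\ms I(V)$) and the nonvanishing of $\alpha$ on $D_V(h)$, which follows from $\alpha\in\Sigma_h$ together with the fact that members of $\ms D$ have no rational zero, as already observed just before the lemma.
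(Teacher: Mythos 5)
Your argument is correct and is essentially the paper's own proof: both directions reduce to the pointwise identity $g(P)h(P)=0$ on $V$, splitting into $P\in D_V(h)$ (where $h(P)\alpha(P)\neq 0$ forces $g(P)=0$) and $P\in \ms Z_V(h)$ (where $h(P)=0$), combined with the fact that an element of $k[V]$ is zero exactly when its induced function vanishes on $V$. No gaps; the extra care you take in spelling out the nonvanishing of $\alpha$ on $D_V(h)$ is exactly the observation made just before the lemma in the paper.
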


\noindent Although the next lemma should be considered as folklore, we include it for the sake of completeness.

\begin{lem}\label{ISOSUB}
For any affine algebraic subvariety $V\subs k^n$, any basic open subset of $V$ is isomorphic, as a locally ringed space in $k$-algebras, to an affine algebraic subvariety of $k^{n+1}$.
\end{lem}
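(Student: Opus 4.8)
The plan is to realise $D_V(h)$ concretely as the \og graph" of the function $1/h$, exactly as in the classical case. Fix a representative $H\in k[X_1,\ldots,X_n]$ of $h\in k[V]$ and, writing $T$ for the extra coordinate on $k^{n+1}$, set $W=\ms Z\big(\ms I(V)+(TH-1)\big)\subs k^{n+1}$, an affine algebraic subvariety of $k^{n+1}$ by definition. A point $(\ov a,b)\in k^n\xx k$ lies in $W$ if and only if $\ov a\in V$ and $b\,h(\ov a)=1$, the latter forcing $h(\ov a)\neq 0$ and $b=1/h(\ov a)$; hence the projection $\pi:W\to D_V(h)$, $(\ov a,b)\mapsto \ov a$, is a bijection onto $D_V(h)$ with inverse $\sigma:P\mapsto (P,1/h(P))$. (If $D_V(h)=\emptyset$, then $h$ vanishes on $V$ and $W=\emptyset$, and the statement is trivial, so we assume $D_V(h)\neq\emptyset$.)

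Since the sheaves of regular functions are sheaves of $k$-valued functions, I would produce the isomorphism by checking that both $\pi$ and $\sigma$ are morphisms of (locally) ringed spaces in $k$-algebras, the associated comparison maps being mere precomposition, $s\mapsto s\circ\pi$ and $t\mapsto t\circ\sigma$. Having regular components, both maps are continuous (the preimage of a basic closed set $\ms Z(g)$ is the zero-set of a regular function, hence closed). It then remains to see that precomposition preserves regularity. For $\pi$ this is immediate: if $s\in \O_V(U)$ is locally of the form $a/b$ with $a,b\in k[V]$, then $s\circ\pi=\rho(a)/\rho(b)$, where $\rho:k[V]\to k[W]$ sends each $X_i$ to $X_i$ (well defined because $\ms I(V)\subs\ms I(W)$), so $s\circ\pi\in\O_W(\pi^{-1}U)$. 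For $\sigma$, if $t\in\O_W(U)$ is locally $a/b$ with $a,b\in k[W]$, then $t\circ\sigma$ is locally $(a\circ\sigma)/(b\circ\sigma)$, and for any $A\in k[X_1,\ldots,X_n,T]$ the function $(A+\ms I(W))\circ\sigma$ is $P\mapsto A(P,1/h(P))$, a polynomial expression in the coordinate functions and in the regular function $1/h$, hence regular on $D_V(h)$; thus $t\circ\sigma\in\O_V(\sigma^{-1}U)$.

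It follows that $(\pi,\pi^\#)$ and $(\sigma,\sigma^\#)$ are morphisms of ringed spaces in $k$-algebras, and since the underlying maps are mutually inverse bijections while the sheaf maps are plain precomposition, the contravariant functoriality of precomposition gives $\sigma^\#\circ\pi^\#=(\pi\circ\sigma)^\#=\mathrm{id}$ and likewise $\pi^\#\circ\sigma^\#=\mathrm{id}$; hence $\pi$ is an isomorphism of ringed spaces, and therefore of locally ringed spaces (the local character of the stalks being preserved by any isomorphism), between $W$ and $D_V(h)$ endowed with the restriction of $\O_V$. The one step deserving genuine care — and the only place where the non-algebraically-closed setting could intervene — is the verification that precomposition by $\sigma$ lands in $\O_V$, i.e. that substituting the regular function $1/h$ for the variable $T$ in an element of $k[W]$ yields a regular function on $D_V(h)$; everything else is formal. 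One could alternatively compare the stalks directly through the small lemma \ref{SMALEM} together with an isomorphism $k[W]\cong k[V]_h$, the latter resting on the fact that $k[V]_h$ is already special (it embeds into $k^{D_V(h)}$ via evaluation), so that its equiradical of $(0)$ is trivial and no \emph{a priori} larger ideal $\ms I(W)$ appears; but the direct argument above sidesteps this computation.
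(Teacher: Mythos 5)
Your proposal is correct and follows essentially the same route as the paper: both realise $D_V(h)$ as the graph $W=\ms Z(\ms I(V)+(TH-1))\subs k^{n+1}$, show the projection is a homeomorphism, and verify that the sheaf comparison maps --- precomposition with the projection and with the section $P\mapsto (P,1/h(P))$ --- preserve regularity in both directions. The only cosmetic difference is that the paper makes the regularity of $t\circ\sigma$ explicit by a homogenisation computation, where you invoke closure of regular functions under polynomial expressions in $1/h$; both are fine.
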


\noindent A first topological consequence of the ANS is the following essential

\begin{lem}\label{COMPSUB}
Every affine algebraic subvariety is compact for the Zariski topology.
\end{lem}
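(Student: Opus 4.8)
The plan is to establish quasi-compactness directly, by extracting a finite subcover from an arbitrary open cover, and to do so through the ideal-theoretic avatar of a covering supplied by the \'equinullstellensatz rather than by merely invoking Noetherianity. First I would reduce to covers by basic open sets: since the $D_V(h)$, $h \in k[V]$, form a basis of the induced Zariski topology on $V$, any open cover $\{U_i\}_i$ can be refined into a cover by basic opens $\{D_V(h_j)\}_{j \in J}$ with each $D_V(h_j)$ contained in some $U_{i(j)}$, and a finite subcover of the refinement yields at once a finite subcover of the original. Hence it suffices to extract a finite subcover from an arbitrary cover $V = \bigcup_{j \in J} D_V(h_j)$.

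Next comes the heart of the argument. A point $P \in V$ fails to lie in $\bigcup_j D_V(h_j)$ exactly when $h_j(P) = 0$ for all $j$, so $V = \bigcup_j D_V(h_j)$ is equivalent to $\ms Z_V(J') = \emptyset$ for the ideal $J' = (h_j : j \in J)$ of $k[V]$, i.e. to the statement that $J'$ has no zero in $V$. By Corollary \ref{CORNSREL} this happens if and only if $J' \cap S \neq \emptyset$, where $S = \{g \in k[V] : \forall P \in V,\ g(P) \neq 0\}$. I would then pick some $s \in J' \cap S$. Being an element of the ideal generated by the $h_j$, $s$ is a finite $k[V]$-linear combination $s = \sum_{l=1}^r \alpha_l h_{j_l}$. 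The finite family $\{D_V(h_{j_l})\}_{l=1}^r$ now covers $V$: for every $P \in V$ we have $s(P) \neq 0$ because $s \in S$, so at least one $h_{j_l}(P) \neq 0$, i.e. $P \in D_V(h_{j_l})$. This produces the required finite subcover.

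The only genuinely delicate point is the second step, and it is precisely there that the machinery of the paper does its work: over a non-algebraically closed $k$ the classical Nullstellensatz certificate \og$1 \in \sqrt{J'}$" is unavailable, and the \'equinullstellensatz (through Corollary \ref{CORNSREL}) replaces it by the existence of a single element $s$ of $J'$ lying in the multiplicative set $S$ of nowhere-vanishing functions. Since ideal membership only ever involves finite combinations of generators, the certificate $s$ automatically confines the cover to finitely many pieces, so no separate finiteness input is needed. For completeness one may also note the purely topological shortcut: $k^n$ is a Noetherian space by Hilbert's basis theorem, the subspace $V$ is therefore Noetherian, and every Noetherian space is quasi-compact; but the argument above is the one intrinsic to the present framework.
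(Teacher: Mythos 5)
Your proof is correct and takes essentially the same route as the paper's: both arguments translate the cover $V=\bigcup_j D_V(h_j)$ into the emptiness of $\ms Z_V\bigl((h_j:j)\bigr)$, use the \'equinullstellensatz to produce a nowhere-vanishing element of that ideal, and read the finite subcover off the finite $k[V]$-linear combination expressing it. The only cosmetic difference is that the paper reaches the nowhere-vanishing element via the equiradical and Theorem \ref{CARERAD} (an $m\in\Sigma_1=M_{k[V]}$) where you invoke Corollary \ref{CORNSREL} and the set $S$ directly, which is interchangeable here in view of Lemma \ref{CARSPE}.
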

\begin{proof}
Let $V\subs k^n$ be any such subvariety and $V=\bigcup_I D_V(f_i)$ a basic open cover : we have $\emptyset=\bigcap_I \ms Z_V(f_i)=\ms Z_V(\sum_I k[V] f_i)$, so $k[V]=\ms I(\emptyset)=\ms I(\ms Z_V(\sum_I k[V] f_i))=\erad{\sum_I k[V]f_i}$ by Theorem \ref{NSREL}, and thus by Theorem \ref{CARERAD} there exists $m\in \Sigma_1=M_{k[V]}$, a finite subset $I_0$ of $I$ and $(a_i:i\in I_0)$ in $k[V]$ such that $m=\sum_{I_0} a_i f_i$, therefore $\emptyset=\ms Z_V(m)\supseteq\ms Z_V(\sum_{I_0} k[V]f_i)=\bigcap_{I_0} \ms Z_V(f_i)$, whence $V=\bigcup_{I_0} D_V(f_i)$.
\end{proof}

\noindent So far we have defined a natural map $k[V]_{<h>}\to \Gamma(D_V(h),\O_V)$, $g/\alpha\mapsto [P\mapsto g(P)/\alpha(P)]$, which is an injective $k$-morphism : if the member on the right is zero, then the regular map defined by $gh/h\alpha$ is zero on $D_V(h)$, so $gh^2=0$ in $k[V]$ by Lemma \ref{MAG.3.5} and $g/\alpha=gh^2/h^2\alpha=0$ in $k[V]_{<h>}$. The following theorem is our core structure result, bringing together the preceding algebraic theory and the affine geometric theory, and is inspired by \cite{MAG}, Proposition 3.6(a).

\begin{thm}\label{MAG.3.6.a}
The morphism $k[V]_{<h>}\to \Gamma(D_V(h),\O_V)$ is an isomorphism. In particular, $\O_V$ is a sheaf of essential $*$-algebras over $V$.
\end{thm}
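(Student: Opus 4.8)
The plan is to establish the isomorphism by proving that the natural morphism $\theta:k[V]_{\<h\>}\to\Gamma(D_V(h),\O_V)$ is surjective, injectivity having just been checked, and then to deduce the sheaf-theoretic clause. First I would reduce an arbitrary section to finitely many fractions. Given $s\in\Gamma(D_V(h),\O_V)$, regularity provides, around each $P\in D_V(h)$, elements $a,b\in k[V]$ and a basic open $P\in D_V(c)\subs D_V(h)\cap D_V(b)$ on which $s=a/b$; then $D_V(c)=D_V(bc)$ and $s=(ac)/(bc)$ there, so $P$ has a basic neighbourhood $D_V(f)\subs D_V(h)$ with $s=g/f$ for some $f,g\in k[V]$. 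Since $D_V(h)$ is isomorphic to an affine algebraic subvariety by Lemma \ref{ISOSUB}, it is compact by Lemma \ref{COMPSUB}, so finitely many such opens suffice: $D_V(h)=\bigcup_{i=1}^r D_V(f_i)$ with $s|_{D_V(f_i)}=g_i/f_i$.

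The second step turns the gluing into algebraic identities. On $D_V(f_if_j)=D_V(f_i)\cap D_V(f_j)$ the regular function $g_i/f_i-g_j/f_j=(g_if_j-g_jf_i)/(f_if_j)$ vanishes, so Lemma \ref{MAG.3.5} (with multiplier $1\in\Sigma_{f_if_j}$) gives $(g_if_j-g_jf_i)f_if_j=0$ in $k[V]$. Replacing each pair $(g_i,f_i)$ by $(g_if_i,f_i^2)$ changes neither $D_V(f_i)$ nor the fraction representing $s$, but converts this identity into the clean relation $g_if_j=g_jf_i$ in $k[V]$ for all $i,j$; after renaming I may assume this holds.

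The third step produces a single denominator, and this is where the equiresidual machinery replaces the classical Nullstellensatz. From $\bigcup_i D_V(f_i)=D_V(h)$ we get $\ms Z_V(h)=\bigcap_i\ms Z_V(f_i)=\ms Z_V((f_1,\ldots,f_r))$, hence $h\in\ms I(\ms Z_V(h))=\erad{(f_1,\ldots,f_r)}$, and Theorem \ref{CARERAD} furnishes an element $\alpha\in\Sigma_h\cap(f_1,\ldots,f_r)$, say $\alpha=\sum_i a_if_i$ with $a_i\in k[V]$. Setting $g:=\sum_i a_ig_i$ gives $g/\alpha\in\Sigma_h^{-1}k[V]=k[V]_{\<h\>}$, and the clean relations yield $gf_j=\sum_i a_ig_if_j=\sum_i a_ig_jf_i=g_j\alpha$. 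Since $\alpha$ does not vanish on $D_V(h)\supseteq D_V(f_j)$, dividing this identity by $\alpha f_j$ shows that $\theta(g/\alpha):P\mapsto g(P)/\alpha(P)$ coincides with $g_j/f_j=s$ on each $D_V(f_j)$; as these cover $D_V(h)$ we obtain $\theta(g/\alpha)=s$, so $\theta$ is surjective and hence an isomorphism.

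For the final clause, Lemma \ref{CARUNLOC} identifies $\Gamma(D_V(h),\O_V)\cong k[V]_{\<h\>}\cong(k[V]_h)_M$, which is a $*$-algebra by Lemma \ref{CARSTAR}, and it is special because as a ring of regular functions it embeds by evaluation into $k^{D_V(h)}$. The same two remarks handle an arbitrary open $U$: evaluation embeds $\O_V(U)$ into $k^U$, so it is special, while for any $D\in\ms D$ and tuple $\ov s$ from $\O_V(U)$ the function $D(\ov s)$ is nowhere zero on $U$ (as $D$ has no zero in $k$) and has a regular reciprocal, locally $b^{\deg D}/D^\#(\ov a,b)$ when $\ov s=\ov a/b$, so every element of $M_{\O_V(U)}$ is invertible; thus $\O_V$ is a sheaf of special $*$-algebras. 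I expect the \emph{main obstacle} to be the third step: where the classical argument writes a power $h^N$ inside $(f_1,\ldots,f_r)$ via Hilbert's Nullstellensatz, one must here pass through $\erad{(f_1,\ldots,f_r)}$ and Theorem \ref{CARERAD} to extract a single multiplier $\alpha\in\Sigma_h$, and the normalisation of the second step is precisely what makes $\alpha$ cooperate with the relations $g_if_j=g_jf_i$.
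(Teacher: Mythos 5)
Your proof is correct and follows essentially the same route as the paper's: compactness of $D_V(h)$ via Lemmas \ref{ISOSUB} and \ref{COMPSUB}, Lemma \ref{MAG.3.5} on overlaps, and the \'equinullstellensatz together with Theorem \ref{CARERAD} to extract a single denominator $\alpha\in\Sigma_h\cap(f_1,\ldots,f_r)$; your normalisation $g_if_j=g_jf_i$ by squaring denominators is the classical variant of the paper's relation $h_ih_j^2g_i=h_i^2h_jg_j$, and both lead to the same representative $(\sum_i a_ig_i)/\alpha$. For the second clause you argue directly on an arbitrary open $U$ (evaluation into $k^U$ for specialness, and invertibility of $D(\ov s)$ via the homogenisation $b^{\deg D}/D^\#(\ov a,b)$), where the paper instead writes $\O_V(U)$ as a projective limit of the algebras $\O_V(D_V(f))\cong (k[V]_f)_M$ and invokes Lemmas \ref{CARUNLOC}, \ref{CARSTAR} and \ref{ISOSUB}; both are valid, and yours is marginally more self-contained.
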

\begin{proof}
As for the first assertion, it only remains to prove that the morphism is surjective. Let $f\in \Gamma(D_V(h),\O_V)$ : there exists an open cover $D_V(h)=\bigcup_i U_i$, as well as $g_i,h_i\in k[V]$ for each $i$, such that for every $i$ we have $f|_{U_i}\equiv g_i/h_i$, and we may assume that $U_i=D_V(a_i)$ is a basic open for each $i$ : we have $D_V(a_i)\subs D_V(h_i)$ and by $(*)$, for every $i$ there exists $\alpha_i\in \Sigma_{a_i}$ and $g_i'\in k[V]$ with $\alpha_i=g_i'h_i$; on $D_V(a_i)$, $f$ is represented by $g_ig_i'a_i/a_ih_ig_i'=g_ig_i'a_i/a_i\alpha_i$ : replacing $g_i$ by $g_ig_i'a_i$, and $h_i$ by $a_i\alpha_i$, as $D_V(a_i)=D_V(a_i\alpha_i)$ we may suppose that $U_i=D_V(h_i)$ for all $i$ and now have $D_V(h)=\bigcup_i D_V(h_i)$, with $f$ represented on $D_V(h_i)$ by $g_i/h_i$. By Lemmas \ref{ISOSUB} and \ref{COMPSUB}, $D_V(h)$ is compact so we may assume that this cover is finite, and as the functions represented by $g_i/h_i$ and $g_j/h_j$ on $D_V(h_i)\cap D_V(h_j)=D_V(h_ih_j)$ are equal, we have $(g_ih_j-g_jh_i)/h_ih_j\equiv 0$ on $D_V(h_ih_j)$, whence by Lemma \ref{MAG.3.5} $h_ih_j(g_ih_j-g_jh_i)=0$ in $k[V]$, i.e. $h_ih_j^2g_i=h_i^2h_jg_j$. Now we have $D_V(h)=\bigcup_{i=1}^m D_V(h_i)=\bigcup_{i=1}^m D_V(h_i^2)$, so $\ms Z_V((h))=\ms Z_V((h_1^2,\ldots,h_m^2))$, whence $h\in \ms I(\ms Z_V((h_1^2,\ldots,h_m^2))=\erad{(h_1^2,\ldots,h_m^2)}$ by the \"Aquinullstellensatz \ref{NSREL} again, and by Theorem \ref{CARERAD} there exist $\alpha\in \Sigma_h$ and $a_i\in k[V]$ such that $\alpha=\sum_{i=1}^m a_i h_i^2$ : we want to show that $f$ is represented on $D_V(h)$ by $(\sum_i a_ig_ih_i) / \alpha$. Let $P\in D_V(h)$ : for each $j$ such that $P\in D_V(h_j)$, we have $h_j^2\sum_{i=1}^m a_ig_ih_i=\sum_{i=1}^m a_i g_j h_j h_i^2=g_j h_j \alpha$ by what precedes, and $f$ is represented on $D_V(h_j)$ by $g_j/h_j$, so $fh_j\equiv g_j$ on $D_V(h_j)$, on which therefore we have $fh_j^2\alpha\equiv g_jh_j\alpha\equiv h_j^2\sum_{i=1}^m a_ig_ih_i$ as maps. As $h_j(P)^2\neq 0$, on $D_V(h_j)$ we have $f\alpha\equiv\sum_i a_ig_ih_i$ as maps, so that $f$ is represented on $D_V(h_j)$ by $(\sum_i a_i g_i h_i)/ \alpha$, and as this is true for every $j$, this is true on $D_V(h)$, so finally the morphism $k[V]_{<h>}\to \Gamma(D_V(h),\O_V)$ is surjective, it is an isomorphism. As for the second assertion, for every open subset $U\subs V$, we have $U=\bigcup\{ D_V(f) : D_V(f)\subs U, f\in k[V]\}$, and therefore $\O_V(U)=\proj_{D(f)\subs U} \O_V(D(f))$. Now by what precedes each $\ms O_V(D(f))$ is a $*$-algebra by Lemmas \ref{CARUNLOC} and \ref{CARSTAR}, and an essential algebra as well by Lemma \ref{ISOSUB}; as $*$-algebras and essential algebras are clearly closed under projective limits, $\ms O_V(U)$ is an essential $*$-algebra.
\end{proof}

\begin{cor}\label{CARREGMAP3}
For every affine algebraic subvariety $V\subs k^n$, the $k$-algebra $\Gamma(V,\O_V)$ of everywhere regular functions on $V$ is isomorphic to $k[V]_M$, under the natural $k$-morphism $k[V]_M\to \Gamma(V,\O_V)$.
\end{cor}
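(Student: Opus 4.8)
The plan is to obtain this as the special case $h=1$ of the core Theorem \ref{MAG.3.6.a}. First I would observe that the unit $1\in k[V]$ vanishes nowhere on $V$, so the basic open subset $D_V(1)=\{P\in V : 1\neq 0\}$ is all of $V$, and consequently $\Gamma(D_V(1),\O_V)=\Gamma(V,\O_V)$. Applying Theorem \ref{MAG.3.6.a} with $h=1$ then yields an isomorphism $k[V]_{\<1\>}\cong\Gamma(V,\O_V)$, realised by the natural map $g/\alpha\mapsto[P\mapsto g(P)/\alpha(P)]$. The entire corollary therefore reduces to identifying the localisation $k[V]_{\<1\>}$ with the canonical localisation $k[V]_M$.

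The heart of the matter is thus the purely computational comparison $\Sigma_1=M_{k[V]}$. Recall that $\Sigma_a$ is the multiplicative subset generated by the elements $a^mD^\#(\ov b,a^n)$, for $D\in\ms D$ of degree $d$, $\ov b$ an appropriate tuple from $k[V]$, and $m,n\in\NN$, where $D^\#(\ov X,Y)=Y^dD(\ov X/Y)$ is the homogenisation of $D$. Setting $a=1$ collapses every such generator, since $1^mD^\#(\ov b,1^n)=D^\#(\ov b,1)=1^d\,D(\ov b/1)=D(\ov b)$; hence the generators of $\Sigma_1$ are exactly the elements $D(\ov b)$ with $D\in\ms D$ and $\ov b\in k[V]$, which is precisely the already multiplicative set $M_{k[V]}$ of Definition \ref{ALGSIG}. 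It follows that $\Sigma_1=M_{k[V]}$ — an equality already invoked in the proof of Lemma \ref{COMPSUB} — and therefore $k[V]_{\<1\>}=\Sigma_1^{-1}k[V]=M_{k[V]}^{-1}k[V]=k[V]_M$.

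Combining the two identifications gives the isomorphism $k[V]_M\cong\Gamma(V,\O_V)$. Since the map supplied by Theorem \ref{MAG.3.6.a} is the evaluation map $g/\alpha\mapsto[P\mapsto g(P)/\alpha(P)]$, it coincides under the identification $k[V]_{\<1\>}=k[V]_M$ with the natural $k$-morphism $k[V]_M\to\Gamma(V,\O_V)$ obtained by composing canonical localisation with evaluation, as required by the statement. I do not expect any genuine obstacle: the only step needing care is the homogenisation bookkeeping showing $\Sigma_1=M_{k[V]}$, which is entirely routine, so that the corollary is in substance just the $h=1$ instance of the preceding theorem.
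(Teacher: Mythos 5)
Your proposal is correct and is essentially the paper's own proof: the paper also derives the corollary as the $h=1$ instance of Theorem \ref{MAG.3.6.a}, via $\Gamma(V,\O_V)=\Gamma(D_V(1),\O_V)\cong k[V]_{\<1\>}=k[V]_M$. Your explicit verification that the generators of $\Sigma_1$ collapse to $D(\ov b)$, so that $\Sigma_1=M_{k[V]}$, is exactly the identification the paper leaves implicit.
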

\begin{proof}
By Theorem \ref{MAG.3.6.a}, we have $\Gamma(V,\O_V)=\Gamma(D_V(1),\O_V))\cong k[V]_{\<1\>}=k[V]_M$. \end{proof}
\begin{rem}\label{REMCAN}
i) This is the generalisation over an arbitrary field of the characterisation of the ring of global sections of the sheaf of regular functions on an affine algebraic subvariety (if $k$ is algebraically closed, this algebra is essentially $k[V]$).\\
ii) In general $k[V]_M$ is bigger than $k[V]$ (keeping in mind Remark \ref{REMCANLOC}(iii)), so $\Gamma(V,\O_V)$ is not always isomorphic to $k[V]$. 
\end{rem}

\subsection{Global sections of affine equivarieties}
In order to build up the theory of intrinsic geometrical objects associated with EQAG, we introduce a natural category in which we find them among the locally ringed spaces in $k$-algebras. We refer for instance the reader to Chapter II of \cite{HAG} about generalities on sheaves and locally ringed spaces. We focus here on (equiresidual) \emph{affine} algebraic varieties, and will expand to the projective setting in \cite{EQAG2}. 

\begin{defi}\label{DEFEQVAR}
i) An \emph{equiresidual variety over $k$}, or \emph{equivariety over $k$} for short, is a locally ringed space in $k$-algebras $(V,\O_V)$, such that for every $P\in V$, there exists an open neighbourhood $U$ of $P$ in $V$ for which $(U,\O_V|_U)$ is isomorphic to an affine algebraic subvariety with its sheaf of regular functions.\\
ii) We will say that a locally ringed space in $k$-algebras $(V,\O_V)$ is \emph{concrete} if for every open subset $U\subs V$, the $k$-algebra $\O_V(U)$ is a subset of the $k$-algebra $k^U$ of all functions $U\to k$. Say that a map $f:V\to W$ between any such is \emph{regular} if it is continuous and for all open subset $U\subs W$ and $s\in \O_W(U)$, we have $f_U^\#(s):=s\circ f\in \O_V(f^{-1} U)$ : we implicitly consider $f$ as the sheaf morphism $(f,f^\#)$, where $f^\#_U:\O_W(U)\mapsto f_*\O_V(U)$.
\end{defi}
\begin{rem}
By definition, for every open subset $U$ of a \emph{concrete equivariety} $V$ the members of $\O_V(U)$ are \emph{continuous} functions.
\end{rem}

\noindent As morphisms between equivarieties we consider all morphisms of locally ringed spaces in $k$-algebras. For the sake of simplicity, we restrict ourselves to \emph{concrete} equivarieties, which is harmless thanks to the following proposition.

\begin{prop}\label{CONCEQ}
Every morphism of concrete equivarieties is a regular map (and reciprocally), and every equivariety over $k$ is naturally isomorphic to a concrete equivariety.
\end{prop}
\begin{proof}
As for the first claim, if $(f,f^\#):V\to W$ be a morphism of concrete equivarieties over $k$, by hypothesis $f$ is continuous and we want to show that for every open $U\subs W$, $f^\#_U(s)=s\circ f$. 
For $P\in U$, write $Q=f(P)$ : the induced $k$-morphism $f^\#_P:\O_{W,Q}\to \O_{V,P}$ is local, and by definition we have $f^\#_P([s,U]_Q)=[f^\#_U(s),f^{-1}U]_P$, so let $i_P:k\to\O_{V,P}$ and $i_Q:k\to \O_{W,Q}$ be the structural morphisms and $j_P:k\cong\ov{\O_{V,P}}$ and $j_Q:k\cong \ov{\O_{W,Q}}$ the associated residual structural isomorphisms, so that $f^\#_P\circ i_Q=i_P$ and $\ov{f^\#_P}\circ j_Q=j_P$ with $\ov{f^\#_P}:\ov{\O_{W,Q}}\to\ov{\O_{V,P}}$ the residual $k$-isomorphism. We have $j_P(f^\#_U(s)(P))=\ov{[f^\#_U(s),f^{-1}U]_P}=\ov{f^\#_P}(\ov{[s,U]_Q})=\ov{f^\#_P}\circ j_Q(s(f(P)))=j_P(s\circ f(P))$, whence $f^\#_U(s)(P)=s\circ f(P)$, so $f^\#_U(s)=s\circ f$, and thus $(f,f^\#)$ is regular.
As for the second claim, an equivariety $(V,\O_V)$ over $k$ has a basis of \og affine" open subsets : for $U\subs V$ one such there exists an isomorphism $\phi:U\cong V_0\subs k^n$ with an affine subvariety, and we let $\O_V'(U):=\{s\circ \phi : s\in\O_{V_0}(V_0)\}$, a $k$-algebra of functions $U\to k$. This is well defined, for if $\psi:U\cong W_0\subs k^m$ is another isomorphism and $s\circ \phi\in \O_V'(U)$, we have $t:=s\circ \phi\circ \psi^{-1}:W_0\to U\to V_0\to k$, and as $\phi\circ \psi^{-1}$ is an isomorphism and $s\in \O_{V_0}(V_0)$, by the first part of the proof we have $t\in \O_{W_0}(W_0)$, because $\phi\circ \psi^{-1}$ is regular; it follows that $t\circ \psi=s\circ \phi$, so $s\circ\phi\in \{t\circ \psi : t\in \O_{W_0}(W_0)\}$ and by symmetry, we get $\{s\circ \phi : s\in \O_{V_0}(V_0)\}=\{t\circ \psi : t\in \O_{W_0}(W_0)\}$, so that $\O_V'(U)$ is well defined. Now define a sheaf $\O'_V$ of continuous functions as follows : if $U\subs V$ is open, write $U=\bigcup_i U_i$ as the union of \emph{all} affine open subsets $U_i$ of $U$ and put $\O_V'(U):=\{f:U\to k :\forall i,f|_{U_i}\in \O_V'(U_i)\}$ : $\O'_V$ : if $U'\subs U$ is open, the cover $U'=\bigcup_j U'_j$ of $U'$ by all affine open subsets has each $U'_j$ being one of the $U_i$'s, and $\O'_V$ is well defined. It remains to prove that $\O_V'\cong \O_V$ and it suffices to describe an isomorphism on the affine open subsets of $V$, so if $U\subs V$ is any such with $\phi:U\cong V_0\subs k^n$ a local chart, to $s\in \O_V(U)$ we associate $(\phi^\#_{V_0})^{-1}(s)\circ \phi\in \O_V'(U)$ and to every $t\circ \phi\in \O_V'(U)$ we associate $\phi_{V_0}^\#(t)\in \O_V(U)$; as $\phi$ is an isomorphism, this is well defined, and both maps are obviously mutually inverse $k$-isomorphisms. 
\end{proof}

\begin{defi}
An \emph{affine (algebraic) equivariety over $k$} is an equivariety over $k$ which is isomorphic to an affine algebraic subvariety of $k$.
\end{defi}

\noindent We note $EVar^a_k$ the category of \emph{concrete} affine equivarieties over $k$, with arrows the regular morphisms; as we have seen in the introduction, any regular morphism of affine subvarieties is naturally a regular morphism of equivarieties.
If $f:V\to W$ is a morphism in $EVar^a_k$, we have the $k$-algebra morphism $f^\#_W:s\in \O_W(W)\mapsto s\circ f\in \O_V(V)$, and we let $J(f)=f^\#_W$ : this defines a functor $J$ from $EVar^a_k$ to the dual of the category of $k$-algebras. Now the duality of Proposition \ref{AFFDUAL} \emph{cannot}, as in the classic case, be extended to $EVar^a_k$ using $J$, because by Corollary \ref{CARREGMAP3}, for $V\subs k^n$ we only have $J(V,\O_V)\cong k[V]_M$. This means we have to search for another duality, which is provided by the following : 

\begin{defi}
Say that a $*$-algebra $A$ over $k$ is \emph{affine}, if it is isomorphic to an algebra of the form $\Gamma(V,\O_V)$, for $V$ an affine algebraic subvariety of $k$. 
\end{defi}
\begin{rem}
By Corollary \ref{CARREGMAP3}, an affine $*$-algebra is of finite $*$-type (Definition \ref{FINSTARTYP}). 
\end{rem}

\noindent Now the functor $J$ has values in the dual of $*Aff_k$, the category of affine $*$-algebras. We have already encountered these in the following form : 

\begin{prop}\label{AFFSTAR}
A $k$-algebra $A$ is an affine $*$-algebra over $k$ if and only if it is an essential $*$-algebra of finite $*$-type over $k$.
\end{prop}
\begin{proof}
Let $V\subs k^n$ be an affine algebraic subvariety of $k$ such that $A\cong \Gamma(V,\O_V)$ : we have $\Gamma(V,\O_V)\cong k[V]_M$ by Corollary \ref{CARREGMAP3} and as $k[V]$ is special by definition, so is
$k[V]_M$ by Proposition \ref{ESSTAR}. Now the surjective morphism $k[\ov X]\onto k[V]$ gives by canonical localisation a surjective morphism $k[\ov X]_M\onto k[V]_M$, so $A$ is an essential $*$-algebra of finite $*$-type. Conversely, if $\phi:k[X_1,\ldots,X_n]_M\onto A$ is a surjective $k$-morphism, and $B=\phi(k[X_1,\ldots,X_n])$, we have $A\cong B_M$ and as $A$ is essential, so is $B$ as a subalgebra. By Proposition \ref{AFFDUAL}, $B$ is isomorphic to the coordinate algebra $k[V]$ of the affine algebraic subvariety $V=\ms Z(I)$ with $I=Ker(\phi|_{k[\ov X]})$. In particular, we have $A\cong B_M\cong \Gamma(V,\O_V)$ by \ref{CARREGMAP3} again, therefore $A$ is an affine $*$-algebra over $k$.
\end{proof}

\subsection{The essential spectrum and the affine duality}\label{ESSPECDUAL}
\noindent So far we have passed from varieties to algebras, and rather than showing directly that the global sections functor $J:EVar^a_k\to (*Aff_k)^o$ is a duality, we introduce a natural spectrum functor which goes the other way.

\begin{defi}
If $A$ is a $k$-algebra, say that the \emph{essential spectrum of $A$} is the set noted $Sp_k(A)$ of all $k$-morphism $\phi:A\to k$. The \emph{Zariski topology on $Sp_k(A)$} has basic open sets of the form $D(f)=\{\phi\in Sp_k(A) : \phi(f)\neq 0\}$, for $f\in A$. 
\end{defi}

\begin{defi}
If $A$ is a $k$-algebra and $X=Sp_k(A)$, say that a function $f:U\to k$ is \emph{regular} on an open $U\subs X$, if for every $\phi\in U$ there exists an open neighbourhood $U_\phi\subs U$ of $\phi$ and $a,b\in A$  such that for every $\psi\in U_\phi$ we have $f(\psi)=\psi(a)/\psi(b)$.
This defines the \emph{sheaf $\O_X$ of regular functions on $X$}.
\end{defi}

\begin{rem}
A regular function $f:U\to k$ is continuous for the Zariski topologies.
\end{rem}

\begin{lem}\label{LEMTIG2}
Let $A$ be a $k$-algebra, $X=Sp_k(A)$ its essential spectrum and $\m=\{f\in \Gamma(X,\O_X) : f(x)=0\}$ for $x\in X$. We have a natural isomorphism $\O_{X,x}\cong J(X)_\m$ (where $J(X)=\Gamma(X,\O_X)$).
\end{lem}
\begin{proof}
Let $[f]\in \O_{X,x}$, $U\ni x$ open and $a,b\in A$ such that $f|_U\equiv a/b$, and let $\phi([f]):= a/b\in J(X)_\m$ ($a$ and $b$ identified with their images under $A\to J(X)$). If $O\ni x$ is open and $f|_O\equiv c/d$, there exists $W=D(e)\subs O\cap U$ such that $a/b \equiv c/d$ on $W$, $D(e)=D(bde)\subs D(b)\cap D(d)$ and $e\in J(X)-\m$. We thus have $(ade-bce)/bde|_W\equiv 0$ as a section of $\O_X$, whence $ade-bce\equiv 0$ on $D(e)$; as $bde\equiv 0$ on $J(X)-D(e)$, we get $(ade-bce)bde\equiv 0$ in $J(X)$, and as $bde\notin\m$ we get $ade-bce=0$ in $J(X)_\m$, whence $a/b=c/d$, and $\phi$ is well defined, obviously an $k$-morphism.
Now let $f/g\in J(X)_\m$, $a,b,c,d\in A$ and $U\ni x$ such that $f/g|_U\equiv (a/b)/(c/d)$ : we have $f/g=\phi([ad/bc])$ and $\phi$ is surjective. As for injectivity, if $[f]\in \O_{X,x}$ as before with $f|_U\equiv a/b$ and $\phi([f])=0$, let $g\in J(X)-\m$ with $ga=0$ in $J(X)$ : we have $[a/b,U]=[ag/bg,D(bg)]=0$ in $\O_{X,x}$, so $\phi$ is an isomorphism.
\end{proof}

\noindent If $f:A\to B$ is a morphism of affine $*$-algebras, the map $f^a:Y=Sp_k(B)\to X=Sp_k(A)$, $\psi\mapsto \psi\circ f$, is obviously continuous and if $U\subs X$ is open and $s\in \O_X(U)$, we have $s\circ f^a\in \O_Y((f^a)^{-1} U)$, using $f$ to \og translate" over $(f^{a})^{-1} U$ the local representations of $s$ over $U$. By Lemma \ref{LEMTIG2}, this means we have a functor $K:\ov f\mapsto f^a$ from $(*Aff_k)^o$ to the category of concrete locally ringed spaces in $k$-algebras.\\   
Now let $V\subs k^n$ be an affine subvariety, and consider the natural mapping $\Phi_V:V\to X=Sp_k(J(V))$, $P\mapsto [e_P:f\in J(V)\mapsto f(P)]$ : using the coordinate functions in $J(V)$ we see that $\Phi_V$ is a bijection, and using the isomorphism $J(V)\cong k[V]_M$, that it is a homeomorphism. 

\begin{prop}\label{SPESS2}
If $V$ is an affine subvariety, then $X=Sp_k(\Gamma(V,\O_V))$ is a concrete equivariety and the homeomorphism $\Phi_V$ is a regular isomorphism.
\end{prop}
\begin{proof}
Writing $\Phi=\Phi_V$, we only have to show that $\Phi$ is regular. If $U\subs X$ is open and $s\in\O_X(U)$, we have $\Phi^\#_U(s)=s\circ \Phi$ : let $P\in \Phi^{-1}U$, $U_P\subs U$ open and $a,b\in A=\Gamma(V,\O_V)$ such that $e_P\in U_P$ and $s|_{U_P}\equiv a/b$, we have $(s\circ \Phi)|_{\Phi^{-1} U_P}\equiv a/b$ as well, so $s\circ \Phi\in \O_V(\Phi^{-1}U)$. Now for each $P\in V$, the induced residual morphism $\Phi_P:\O_{X,e_P}\to \O_{V,P}$ is obviously local by Lemma \ref{LEMTIG2}, so $\Phi$ is a regular morphism of locally ringed spaces in $k$-algebras. With the same notations, assume $\Phi^\#_U(s)=0$ : for all $P\in \Phi^{-1}U$ we have $0=s\circ \Phi(P)=s(e_P)$, hence $s|_U\equiv 0$ and $\Phi^\#_U$ is injective. As for surjectivity, if $g\in \O_V(\Phi^{-1}U)$ for each $P\in\Phi^{-1}U$ represent $g$ on an open nieghbourhood $O_P\subs \Phi^{-1}U$ of $P$ by $a_P/b_P$ for $a_P,b_P\in k[V]$ and apply $\Phi$ : on the open $U_P=\Phi(O_P)\subs U$, $b_P$ does not vanish and we let $s:e_P\in U\mapsto a_P(P)/b_P(P)$. If $Q\in O_P$, as $g|_{O_P}\equiv a_P/b_P$ and $g|_{O_Q}\equiv a_Q/b_Q$, we have $s|_{U_P}\equiv a_P/b_P$ for all $P$, hence $s\in \O_X(U)$ and by definition we have $s\circ \Phi=g$, whence $\Phi$ is an isomorphism.
\end{proof}

\noindent If $A$ is an affine $*$-algebra, then $A\cong J(V)$ for $V$ an affine subvariety, and the induced isomorphism $X=Sp_k(A)\cong Sp_k(J(V))$ is regular by Proposition \ref{SPESS2} : this means that the functor $K$ has values in the full sub-category $EVar^a_k$ and we may now use it to dualise this one. To this end we let $f_A:a\in A\mapsto s\in JK(A)$ with $s\equiv a$ on $K(A)=Sp_k(A)$; the mapping $f_A$ is obviously a $k$-morphism. 

\begin{prop}\label{ISOVAR2}
The regular isomorphisms $\Phi_V:V\cong X=KJ(V)$, for $V$ a concrete affine equivariety, define a natural isomorphism $\Phi:Id\cong K\circ J$.
\end{prop}
\begin{proof}
As for naturality, it suffices to show that for every regular morphism $f:V\to W$ of concrete affine equivarieties, the following diagram commutes :
$$\begin{CD}
V @>f>> W\\
@V\Phi_{V} VV @VV\Phi_W V\\
X @>>g> Y,\end{CD}$$
where $g=KJ(f):e_P\in X=Sp_k(\Gamma(V,\O_V))\mapsto e_{f(P)}\in Y=Sp_k(\Gamma(W,\O_W))$ by definition. Now if $P\in V$, we have $\Phi_W\circ f(P)=e_{f(P)}=g(e_P)=g\circ \Phi_V(P)$, and $\Phi$ is a natural transformation.
As for $\Phi$ being an isomorphism, for $V$ in $Evar^a_k$ choose $f:V\cong W$ in the preceding diagram with $W$ an affine subvariety : by Proposition \ref{SPESS2}, $\Phi_W$ is a regular isomorphism; as $f$ and $KJ(f)$ also are, it follows that $\Phi_V$ itself is an isomorphism, and the proof is complete.
\end{proof} 

\begin{prop}\label{ISOALG}
The $k$-algebra morphisms $f_A:A\to JK(A)$, for $A\in *Aff^o_k$, define a natural isomorphism $f:Id\cong J\circ K$.
\end{prop}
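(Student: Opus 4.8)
The plan is to check two things: that $f_A$ is a well-defined $k$-algebra morphism into $JK(A)$ for each affine $*$-algebra $A$, and that the assignment $A\mapsto f_A$ is natural in $A$. The first point is immediate from the construction of $\O_{Spm(A)}$: for $a\in A$ and $X=Spm(A)$, the map $f_A(a)\colon \m\in X\mapsto [a]\in A/\m$ is represented on all of $X$ by $[a]/[1]$ (i.e. by the pair $u=a$, $v=1$ in the definition of $\O_X(X)$), hence $f_A(a)\in \O_X(X)=JK(A)$; and since the residue maps $a\mapsto [a]\in A/\m$ are $k$-algebra morphisms fibrewise, so is $f_A$, as the statement already observes.

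The substance is thus naturality. Let $h\colon A\to B$ be a morphism of affine $*$-algebras; I must show that
\[
JK(h)\circ f_A \;=\; f_B\circ h .
\]
Here I would first unwind $JK(h)$. Recall $K(h)=(h^a,(h^a)^\#)$ with $h^a\colon Spm(B)\to Spm(A)$, $\n\mapsto h^{-1}(\n)$; since $K$ lands in $(EVar^a_k)^o$ and $J$ is contravariant on $EVar^a_k$ with $J(\phi)=\phi^\#_W$ for $\phi\colon V\to W$, applying $J$ to the underlying morphism $(h^a,(h^a)^\#)\colon Spm(B)\to Spm(A)$ yields exactly $JK(h)=(h^a)^\#_{Spm(A)}\colon JK(A)\to JK(B)$. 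I then invoke the explicit pullback formula established just before the statement: for $s\in\O_{Spm(A)}(U)$ and $\n\in (h^a)^{-1}(U)$, one has $(h^a)^\#_U(s)(\n)=(h/\n)\big(s(h^a(\n))\big)$, where $h/\n\colon A/h^{-1}(\n)\cong B/\n$ is the residual isomorphism induced by $h$, which by construction sends a class $[x]$ to $[h(x)]$.

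With these identifications the verification is a direct computation. Fix $a\in A$ and $\n\in Spm(B)$, and write $\m:=h^a(\n)=h^{-1}(\n)$. Taking $U=Spm(A)$, so that $(h^a)^{-1}(U)=Spm(B)$, and evaluating $JK(h)(f_A(a))$ at $\n$ gives
\[
(h^a)^\#_{Spm(A)}\big(f_A(a)\big)(\n)=(h/\n)\big(f_A(a)(\m)\big)=(h/\n)\big([a]\big)=[h(a)]\in B/\n,
\]
which is precisely $f_B(h(a))(\n)$. As $a$ and $\n$ are arbitrary, this shows $JK(h)\circ f_A=f_B\circ h$, i.e. the naturality square commutes, and hence $f$ is a natural transformation $Id\to J\circ K$.

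I do not expect any genuine obstacle: the argument is entirely formal once the definitions are in place. The only points requiring care — and the places where an error would creep in — are the bookkeeping of the dualisation (so that $J$ is applied contravariantly and $JK(h)$ comes out as $(h^a)^\#_{Spm(A)}$ rather than its would-be inverse) and the observation that $f_A(a)$ admits the global representation $[a]/[1]$, which is what lets the pullback formula be applied over the whole of $Spm(B)$ in one stroke.
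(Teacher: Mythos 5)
Your proof is correct and follows essentially the same route as the paper's: both unwind $JK(h)$ as the global-sections component $(h^a)^\#_{Spm(A)}$ of the pullback, apply the formula $(h^a)^\#_U(s)(\n)=(h/\n)(s(h^{-1}\n))$ to the section $f_A(a):\m\mapsto [a]$, and observe that $(h/\n)([a])=[h(a)]$ gives $f_B(h(a))$. Your additional remark that $f_A(a)$ is globally represented by $[a]/[1]$, justifying membership in $\O_{Spm(A)}(Spm(A))$, is a useful explicit check that the paper leaves implicit.
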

\begin{proof}
Let $g:A\to B$ be a $k$-morphism of affine $*$-algebras and $(X,\O_X):=K(A)$, $(Y,\O_Y):=K(B)$ : we have the morphism $K\ov g:Y\to X$, $\psi\mapsto \psi\circ g$ and $JK\ov g:s\in \Gamma(X,\O_X)\mapsto s\circ K\ov g$, with $s\circ K\ov g:\psi\in Y\mapsto s(\psi\circ g)$. It follows that for each $a\in A$, we have $JK\ov g\circ f_A(a):\psi\mapsto f_A(a)\circ K\ov g (\psi)=f_A(a)(\psi\circ g)=\psi\circ g(a)=f_B(g(a))(\psi)$, whence $JK\ov g\circ f_A=f_B\circ \phi$, and $f$ is a natural transformation :
$$\begin{CD}
A @>g>> B\\
@Vf_A VV @VVf_B V\\
JK(A) @>>JK\ov g> JK(B).\end{CD}$$
Now assume that $A=\Gamma(V,\O_V)$ for $V$ an affine subvariety : if $f_A(a)=0$, for each character $e_P\in X$ we have $a(P)=0$, so $a=0$ and $f_A$ is injective. As for surjectivity, if $s\in\Gamma(X,\O_X)$ we have seen in Proposition \ref{SPESS2} that $\phi_V$ is regular, so that $s\circ \phi_V\in A$ : for $e_P\in X$ we have $f_A(s\circ\phi_V)(e_P)=e_P(s\circ \phi_V)=s\circ\phi_V(P)=s(e_P)$, so $f_A(s\circ\phi_V)=s$, and $f_A$ is surjective, it is an isomorphism. In general, for $B$ in $*Aff_k$ there exists an isomorphism $g:A=\Gamma(V,\O_V) \cong B$ and by naturality of the diagram, $f_B$ is an isomorphism.
\end{proof}

\noindent In this situation we have an \emph{adjoint equivalence} (\cite{CWM}, Theorem IV.4.1) so putting together Propositions \ref{ISOVAR2} and \ref{ISOALG}, we obtain the duality theorem between (concrete) affine equivarieties and affine $*$-algebras :

\begin{thm}\label{DUALAFF}
The global sections functor $J:EVar^a_k\to *Aff_k^o$ is a duality between the categories $EVar^a_k$ and $*Aff_k$, with adjoint the essential spectrum functor $K$.
\end{thm}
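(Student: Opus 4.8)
The plan is to assemble the two natural isomorphisms already established and then invoke the standard categorical criterion for an adjoint equivalence that was recalled at the start of the subsection on maximal spectra (\cite{CWM}, Theorem IV.4.1). All the substantive work has in fact been done, so the proof is a matter of bookkeeping.

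First I would record that Proposition \ref{DUAL1} furnishes a natural isomorphism $f:Id\cong J\circ K$ of endofunctors of $*Aff_k$, while Proposition \ref{DUAL2} furnishes a natural isomorphism $\phi:Id\cong K\circ J$ of endofunctors of $EVar^a_k$. Taken together, these exhibit $J$ and $K$ as mutually quasi-inverse functors, hence an equivalence between the opposite category $(EVar^a_k)^o$ and $*Aff_k$, which is by definition a duality between $EVar^a_k$ and $*Aff_k$.

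Next, to promote this equivalence to an adjoint equivalence in which $K$ is right adjoint to $J$, I would appeal to the criterion recalled before the construction of the maximal spectrum: a pair of functors equipped with natural isomorphisms $Id\cong K\circ J$ and $Id\cong J\circ K$ can always be refined, by adjusting one of the isomorphisms if necessary, into an adjoint equivalence whose unit and counit satisfy the triangle identities (\cite{CWM}, Theorem IV.4.1). Since we have exactly such a pair of isomorphisms, the conclusion follows at once, and $K$ is the right adjoint of $J$.

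The proof presents essentially no obstacle, because the difficult content — the construction of the sheaf $\O_X$ on $Spm(A)$, the verification that the comparison maps $\phi_V$ and $f_A$ are isomorphisms, and the naturality of both transformations — has already been carried out in Propositions \ref{FUNCPHI}, \ref{DUAL2} and \ref{DUAL1}. The only point deserving a word of care is the purely formal one of ensuring that two independently constructed natural isomorphisms can be made to satisfy the triangle identities simultaneously; but this is guaranteed in full generality by the cited theorem of Mac Lane, so no further computation is required.
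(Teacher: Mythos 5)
Your proof is correct and follows essentially the same route as the paper, which simply assembles Propositions \ref{DUAL2} and \ref{DUAL1} and relies on the criterion from \cite{CWM}, Theorem IV.4.1 recalled earlier in the section. Your additional remark that one of the two isomorphisms may need adjusting to satisfy the triangle identities is a welcome point of care that the paper leaves implicit.
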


\part{Extending the Algebraic Universe}
In principle, in EQAG we are not looking for rational points of varieties in \emph{algebraic} extensions of $k$ (a point which is exhaustively clarified in \ref{SPEXTRI}). However, certain natural extensions of $k$ or even $k$-algebras, such as the field $k(V)$ of rational functions or its generalised version on any affine variety $V$, belong in some sense in the \og algebraic universe" of EQAG over $k$, which we therefore wish to extend in this second part.

\section{Special algebra}\label{SPECALGRAD}
\emph{Special} algebras were already introduced implicitly as \emph{essential} algebras in Definition \ref{SPEALG} in a finitary context, and they are generalised using the characterisation of Corollary \ref{CARSPESIG'}. They appear to be subtly connected to $*$-algebras and fundamentally to normic forms (Definition \ref{NORMFORM}), which lie at the root of the theory. Furthermore, they naturally allow us to deal with the application of EQAG to model-theoretic algebraic geometry, which is tackled in \cite{PAG}.

\subsection{The homogeneous signature of a field}\label{SECHOMSGN}
In Lemma \ref{CARSPESIG} and Corollary \ref{CARSPESIG'} appear homogenisations of elements of the algebraic signature, which leads us to the following analogue of \ref{ALGSIG} : 

\begin{defi}\label{DEFHOMSGN}
The \emph{homogeneous signature of $k$} is the set $\ms H$ of all non-zero homogeneous polynomials $P(\ov X,Y)\in k[X_i:i\in\NN]$ such that for all appropriate $\ov a\in k$, $0$ is the only zero of $P(\ov a,Y)$ in $K$.
\end{defi}

\begin{rem}\label{INCSIG0}
i) If $D(\ov x)\in\ms D$ and $\ov a,b\in k$ are such that $D^\#(\ov a,b)=0$ with $b\neq 0$, then $D(\ov a/b)=0$, which is impossible. It follows that the set $\ms D^\#$ of all homogenisations of elements of $\ms D$ is a subset of $\ms H$.\\
ii) Every strictly positive power $Y^n$ of an indeterminate $Y$ is in $\ms H$, and these are its only members when $k$ is algebraically closed. 
\end{rem}

\noindent We will use the algebraic and homogeneous signatures to characterise certain rings and fields naturally associated to EQAG over $k$. The first matter of business is to relate $\ms D$ and $\ms H$ through $\ms D^\#$, which amounts to characterising the elements of $\ms H$ in terms of $\ms D$, thanks to the folkloric

\begin{lem}\label{RELHOM0}
If $R$ is any ring and $P(\ov X,Y)$ is a homogeneous polynomial of degree $d$ with coefficients in $R$ and such that there exists in $P$ a pure monomial in $\ov X$, then for $Q(\ov X)=P(\ov X,1)$ we have $P=Q^\#(\ov X,Y)$.
\end{lem}

\begin{prop}\label{HOMSGN0}
A homogeneous polynomial $P(\ov X,Y)$ with coefficients in $k$ is in $\ms H$ if and only if it has the form $Y^e Q(\ov X,Y)$ with $Q\in \ms D^\#$, i.e. $P(\ov X,Y)=Y^e D^\#(\ov X,Y)$ with $D(\ov X)\in \ms D$.
\end{prop}
\begin{proof}
Let $Q(\ov X,Y)=D^\#(\ov X,Y)\in \ms D^\#$, $e\in\NN$ and $\ov a,b\in k$ : if $b^e Q(\ov a,b)=0$, then either $b^e=0$ or $Q(\ov a,b)=0$, and in either case we have $b=0$ (by Remark \ref{INCSIG0} for the second), so $Y^e Q(\ov X,Y)\in \ms H$. Conversely if $P(\ov X,Y)\in \ms H$, let $e$ be the greatest natural number $n$ such that $Y^n|P$, and write $P=Y^e Q(\ov X,Y)$. For $R(\ov X)=Q(\ov X,1)$ and $\ov a\in k$ such that $Q(\ov a,1)=R(\ov a)=0$, we have $P(\ov a,1)=0$ as well and by definition of $\ms H$ this means that $1=0$, which is impossible, so $R(\ov a)\neq 0$ and $R\in \ms D$. Now by choice of $e$, the homogeneous polynomial $Q$ contains a pure monomial in $\ov X$ and by Lemma \ref{RELHOM0}, we get $Q=R^\#(\ov X,Y)$ and the proof is complete.
\end{proof}

\begin{lem}\label{PERMSGN0}
The set $\ms D^\#$ is closed under variable permutations : if $P^\#(X_1,\ldots,X_n;X_{n+1})$ is in $\ms D^\#$, then for every permutation $\sigma\in \mf S_{n+1}$ we have $(P^\#)^\sigma\in \ms D^\#$. In particular, $\ms H$ itself is closed under permutation of variables. 
\end{lem}
\begin{proof}
Write $\ov X=X_1,\ldots,X_n$ and $P=\sum_{\ov i} a_{\ov i} \ov X^{\ov i}$, and let $d$ be the total degree of $P$ : with $Q=P^\#$, we have $Q=X_{n+1}^d P(\ov X/X_{n+1})$ in the fraction field of $k[X_i]$. Let $\sigma\in \mf S_{n+1}$. First suppose that $\sigma(n+1)=n+1$ : we have $Q^\sigma=\sum_{\ov i} a_{\ov i} \ov X^{\sigma^{-1} \ov i} X_{n+1}^{d-|\ov i|}=X_{n+1}^d P^\sigma(\ov X/X_{n+1})$; as $\ms D$ is closed under permutations of variables, we have $P^\sigma\in \ms D$ and we get $Q^\sigma=(P^\sigma)^\#$, whence $Q^\sigma\in \ms D^\#$. Now suppose that $\sigma(n+1)=j\neq n+1$, and let $\tau=(j,n+1)$ be the transposition exchanging $j$ and $n+1$ : we have $\tau\sigma(n+1)=n+1$, so by what precedes $Q^{\tau\sigma}\in \ms D^\#$, and it remains to prove that $Q^\tau\in \ms D^\#$. Write $Q^\tau=\sum_{\ov i} a_{\ov i} X_1^{i_1}\ldots X_j^{d-|\ov i|}\ldots X_n^{i_n} X_{n+1}^{i_j}$; we consider the polynomial $R(X_1,...,X_{j-1},X_{j+1},\ldots,X_{n+1})$ obtained by substituting $X_{n+1}$ for $X_j$ in $P$, i.e. $R=\sum_{\ov i} a_{\ov i} X_1^{i_1}\ldots X_{n+1}^{i_j}\ldots X_n^{i_n}$ : we have $R^\#(X_1,\ldots,X_{n+1};X_j)=\sum_{\ov i} a_{\ov i} X_1^{i_1}\ldots X_{n+1}^{i_j}\ldots X_n^{i_n} X_j^{d-|\ov j|}=Q^\tau(\ov X;X_{n+1})$, so we only need to prove that $R\in \ms D$, and this is obvious by definition of $R$, because $P\in \ms D$. We conclude that $Q^\sigma\in \ms D^\#$, which is then closed under permutation of variables. 
As for $\ms H$, if $P(\ov X,Y)\in \ms H$ has the form $Y^e D^\#(\ov X,Y)$ by Proposition \ref{HOMSGN0}, write $X_{n+1}=Y$, let $\sigma\in \mf S_{n+1}$ and write $\ov X'=\ov X X_{n+1}$ and $(\ov X')_\sigma=X_{\sigma (1)}\ldots X_{\sigma(n+1)}$ : we have $P^\sigma(\ov X,Y)=P((\ov X')_{\sigma})=X_{\sigma(n+1)}^e D^\#((\ov X')_\sigma)$; by the first part of the proof, we have $D^\#((\ov X')_\sigma)=(D^\#)^\sigma(\ov X')\in \ms D^\#$, so by Proposition \ref{HOMSGN0} again $P^\sigma\in \ms H$, and this completes the proof.
\end{proof}

\noindent In the light of the homogeneous signature, the characterisations of \ref{CARSPESIG} and \ref{CARSPESIG'} refer implicitly to all normic forms over $k$ :  

\begin{prop}\label{CHARSGNS}
i) The homogeneous signature $\ms H$ of $k$ is the set of all normic forms over $k$.\\
ii) The algebraic signature $\ms D$ of $k$ is the set of all deshomogenisations of normic forms over $k$.
\end{prop}
\begin{proof}
i) Every normic form over $k$ is obviously a member of $\ms H$, so let $P(\ov X,X_{n+1})\in\ms H$ with $ \ov X=X_1,\ldots,X_n$ and let $\ov a=a_1,\ldots,a_n$ and $b$ taken from $k$ and such that $P(\ov a,b)=0$. As $P\in\ms H$, we have $b=0$, so let $\tau\in\mf S_{n+1}$ be the permutation exchanging $i$ and $n+1$, for $i\in\{1,\ldots,n\}$ : we have $0=P(\ov a,b)=P^\tau(a_1,\ldots,a_{i-1},b,a_{i+1},\ldots,a_n,a_i)$, and as $P^\tau\in\ms H$ by Lemma \ref{PERMSGN0}, we get $a_i=0$. Therefore, we have $\ov a=\ov 0$ and $b=0$, so that $P$ is indeed a normic form over $k$.\\
ii) Let $D(\ov X)\in \ms D$ : by Remark \ref{INCSIG0}, we have $D^\#(\ov X,Y)\in\ms H$, so by (i), $D(\ov X)=D^\#(\ov X,1)$ is the deshomogenisation of a normic form over $k$. Conversely, if $P(\ov X,Y)$ is a normic form over $k$, then it is a member of $\ms H$ by (i) and thus has the form $Y^eD^\#(\ov X,Y)$ for $D(\ov X)\in \ms D$ by Proposition \ref{HOMSGN0} : it follows that $P(\ov X,1)=D(\ov X)\in\ms D$.
\end{proof}
\begin{rem}
Both signatures have a natural geometric interpretation : the algebraic signature is related to \emph{affine} EQAG in that it lists all the polynomials with no zero rational in the corresponding affine space, whereas the homogeneous signature is related to \emph{projective} EQAG in that it lists all the homogeneous polynomials with no zero rational in the corresponding projective space.
\end{rem}

\subsection{Special ideals and algebras}
The preliminary work on the homogeneous signature will clarify the discussion on special algebras, as abstracted from the characterisations of \ref{CARSPESIG} and \ref{CARSPESIG'}.

\begin{defi}\label{DEFSPE}
Let $A$ be a $k$-algebra and $I$ an ideal of $A$.\\
i) We say that $I$ is \emph{special} if for all $P(\ov X,Y)\in \ms H$ and $\ov a,b\in A$ such that $P(\ov a,b)\in I$, we have $b\in I$.\\
ii) We say that a $A$ is \emph{special} if the ideal $(0)$ in $A$ is special, i.e. if for all $P(\ov X,Y)\in\ms H$ and $\ov a,b\in A$ such that $P(\ov a,b)=0$, we have $b=0$. 
\end{defi}
\begin{rem}
i) By Lemma \ref{CARSPESIG} and Proposition \ref{HOMSGN0}, every essential ideal is special, but we will see that the converse is not true in general (e.g. Eample \ref{SPEPOLRAT}).\\
ii) In general, every special ideal is radical (by Remark \ref{INCSIG0}(ii)). If $k$ is algebraically closed, then $I$ is special if and only if it is radical, so $A$ is special if and only if $A$ is reduced.\\
iii) If $k$ is a real closed field (\cite{RAG}, Definition 1.2.1 or Section \ref{EXESPE}), then $I$ special if and only if it is real (\cite{RAG}, Definition 4.1.3). 
\end{rem}

\begin{prop}\label{CARPRIMSPE}
If $A$ is a $k$-algebra and $\p$ a prime ideal of $A$, then $\p$ is special if and only if for all $D(\ov X)\in\ms D$ and $\ov a,b\in A$ such that $D^\#(\ov a,b)\in\p$, we have $b\in\p$. In particular, an integral $k$-algebra $A$ is special if and only if for all $D(\ov X)\in\ms D$ and $\ov a,b\in A$ such that $D^\#(\ov a,b)=0$, we have $b=0$.
\end{prop}
\begin{proof}
If $\p$ is special and $D^\#(\ov a,b)\in\p$, by Remark \ref{INCSIG0} we have $D^\#\in\ms H$, so $b\in\p$.
Conversely, if the property holds and $P(\ov a,b)\in \p$ with $P(\ov X,Y)\in\ms H$ and $\ov a,b\in A$, then by Proposition \ref{HOMSGN0} $P$ has the form $Y^eD^\#(\ov X,Y)$, so that either $b^e\in \p$ or $D^\#(\ov a,b)\in\p$, and in both cases we get $b\in\p$.
\end{proof}

\begin{cor}\label{LOCSPEPRIM}
The localisation of a $k$-algebra $A$ at a special prime $\p$ of $A$ is a $*$-algebra.
\end{cor}
\begin{proof}
Let $D(\ov X)\in\ms D$ and $\ov a/s$ in $A_\p$ : as $s\notin \p$, we have $D^\#(\ov a,s)\notin \p$ by Proposition \ref{CARPRIMSPE}, so $D(\ov a/s)=(1/s^d).D^\#(\ov a,s)\in A_\p^\xx$. By definition, $A_\p$ is therefore a $*$-algebra.
\end{proof}

\noindent As regards field extensions of $k$, the special ones are exactly the $*$-algebras, and they are characterised by the \emph{algebraic} signature of $k$.

\begin{prop}\label{SPECEXT}
A field extension $K$ of $k$ is special if and only if $K$ preserves the \emph{algebraic} signature of $k$, if and only if $K$ is a $*$-algebra.
\end{prop}
\begin{proof}
If $A$ is any non-trivial special $k$-algebra and $D(\ov X)\in \ms D$, $\ov a\in A$, then as $1\neq 0$ and $D^\#\in\ms H$ we get $D(\ov a)=D^\#(\ov a,1)\neq 0$. Conversely, if the field extension $K$ preserves the algebraic signature of $k$ and $\ov a,b\in K$, with $b\neq 0$, we have $D^\#(\ov a,b)=b^d.D^\#(\ov a/b,1)=b^d.D(\ov a/b)\neq 0$ by hypothesis. By contraposition, if $D^\#(\ov a,b)=0$ we get $b=0$, so $K$ is special by Proposition \ref{CARPRIMSPE}.
As for the second equivalence, $K$ preserve the algebraic signature of $k$ if and only if every element of the form $D(\ov a)$ of $K$ is invertible, i.e. if and only if $K$ is a $*$-algebra.
\end{proof}

\begin{cor}\label{SPEFRAC}
An integral $k$-algebra $A$ is special if and only if its field of fractions $K=Fr(A)$ is.
\end{cor}
\begin{proof}
If $A$ is special, $D(\ov X)\in \ms D$ and $\ov a/s\in K$, we have $s\neq 0$ in $A$, so $D(\ov a/s)=(1/s^d).D^\#(\ov a,s)\neq 0$, and $K$ is special by \ref{SPECEXT}.
Conversely, any sub-algebra of a special algebra is obviously special.
\end{proof}

\noindent All this allows us to identify many natural special algebras associated to EQAG over $k$.

\begin{ex}\label{SPEPOLRAT}
If $V\subs k^n$ is an irreducible affine subvariety, then $k[V]$, $k\{V\}$ and $k(V)$ are special. In particular, the polynomial algebras $k[X_1,\ldots,X_n]$ and rational function fields $k(X_1,\ldots,X_n)$ are special.
\end{ex}
\begin{proof}
By definition, we have $k[V]=k[X_1,\ldots,X_n]/\ms I(\ms Z(I))$, and $\ms I(\ms Z(I))=\erad I$ is essential, so special by Lemma \ref{CARSPESIG} and Proposition \ref{HOMSGN0}. It follows that $k[V]$ is special, as well as $k(V)$ by Corollary \ref{SPEFRAC}, and $k\{V\}$ as a sub-algebra of $k(V)$.
\end{proof}

\noindent If the \og good" field extensions of $k$ are the special ones, as regards EQAG there are no rational points to look for in algebraic extensions of $k$ :

\begin{cor}\label{SPEXTRI}
Any special algebraic extension of $k$ is trivial.
\end{cor}
\begin{proof}
It suffices to prove it for any special monogenous finite extension $K=k[X]/(P)$, with $P$ irreducible. By Proposition \ref{SPECEXT} we have $(P)\cap M_A=\emptyset$ in $A=k[X]$, hence by Lemma \ref{CARSPE}, $(P)$ is an essential maximal ideal. By the \"Aquinullstellensatz \ref{NSREL}, $(P)$ has a zero rational in $k$, so $k\cong K$.
\end{proof}
\begin{rem}
This is an equiresidual analogue of the strong form of Hilbert's Nullstellensatz. 
\end{rem}

\subsection{Two natural examples}\label{EXESPE}
We illustrate some of the present concepts in two examples taken from number theory and pertaining to model theory : the field $\RR$ (and real closed fields), the fields $\QQ_p$ (and $p$-adically closed fields). 

\begin{lem}\label{EXSPRIM}
If $A$ is a $k$-algebra and $\p$ is an ideal disjoint from $M_A$ and maximal as such, then $\p$ is a special prime ideal.
\end{lem}
\begin{proof}
By maximality, $\p A_M$ is maximal in $A_M$, so $\p$ is prime, and $K:=A_M/\p A_M$ is a field and a $*$-algebra as a quotient of $A_M$, so by Proposition \ref{SPECEXT}, $K$ is special, and therefore $A/\p$ is special as a sub-algebra, whence $\p$ itself is special.
\end{proof}
\begin{rem}
It follows that there exists a morphism from a $k$-algebra $A$ into a special extension of $k$ if and only if $A_M$ is not trivial (compare with Lemma \ref{CARPRE}).
\end{rem}

\begin{lem}\label{SGNMOR}
If $A$ is a $k$-algebra, then $A$ preserves the algebraic signature of $k$ if and only if there exists a morphism from $A$ into a special extension of $K$.
\end{lem}
\begin{proof}
Let $\phi:A\to K$ be a $k$-morphism into a special extension of $k$ : if $D\in\ms D$ and $\ov a\in A$, then $D(\phi(\ov a))\neq 0$ by Proposition \ref{SPECEXT}, so $D(\ov a)\neq 0$ as well.
Conversely, if $A$ preserves the algebraic signature of $k$, then $0\notin M_A$, so by Zorn's Lemma let $\p$ be a special prime ideal of $A$ by Lemma \ref{EXSPRIM} : $K=Fr(A/\p)$ is a special extension of $k$ by Corollary \ref{SPEFRAC}.
\end{proof}

\noindent Recall that a field $R$ is called \emph{real} if $-1$ is not a sum of squares in $R$ (\cite{RAG}, 1.1.9), and \emph{real closed} if it is real and has no proper algebraic real extension (\cite{RAG}, 1.2.1). By \cite{RAG} 1.2.2, $\RR$ is the prototype of real closed fields. Now in a real field $R$, the existence of an ordering implies that all the polynomials $X_1^2+\ldots X_n^2$, $n\in\NN^*$, are normic forms.

\begin{prop}\label{REALSPEC}
If $R$ is a real closed field, then an $R$-algebra $A$ has a morphism into a special extension of $R$ if and only if $-1$ is not a sum of squares in $A$. In particular, the special field extensions of $R$ are its real extensions.
\end{prop}
\begin{proof}
As $R$ has a unique ordering in which the squares are the non-negative elements (\cite{RAG}, Theorem 1.2.2), the polynomials of the form $1+X_1^2+\ldots +X_n^2$ are in its algebraic signature. If $\phi:A\to R'$ is a morphism into a special extension of $R$, by Lemma \ref{SGNMOR} $-1$ is not a sum of squares in $A$.
Conversely, if $-1$ is not a sum of squares, then by \cite{RAG}, Theorem 4.3.7, there exists an $R$-morphism $\phi:A\to R'$ into a real closed field $R'$; by \cite{RAG}, Proposition 4.1.1, $R'$ preserves the algebraic signature of $R$, hence is a special extension by Proposition \ref{SPEFRAC}.
Finally, by what precedes, any special field extension of $R$ is real, and if $R'$ is a real field extension, then it embeds into a special extension $R''$, so $R'$ itself is special.
\end{proof}

\begin{rem}
In general EQAG may thus encompass already known algebraico-geometric theories over non-algebraically closed fields : here the EQAG of any real closed field is precisely its \emph{real} algebraic geometry, which is not obvious at first in that this last is developped in the category of \emph{all} real closed fields.
\end{rem}

\noindent In the very same spirit, we recall that if $p$ is a prime natural number, a field $F$ is analogously called \emph{formally $p$-adic} (over $\QQ$) if $1/p\notin \ZZ_{p\ZZ}[\gamma(F)]$, where for every $x\in F$, $\gamma(x)=(1/2p)(\frac{1}{x^p-x+1}+\frac{1}{x^p-x-1})$ and $\gamma(F)=\{\gamma(x) : x\in F\}$. A field $F$ is then called \emph{$p$-adically closed} if it is is formally $p$-adic and has no proper algebraic extension which is formally $p$-adic (\cite{SKP}, Definition 3). By \cite{SKP}, Proposition 4, $\QQ_p$ is the prototype of $p$-adically closed fields. In a formally $p$-adic field, the existence of a $p$-adic valuation entails that for all $n\in\NN^*$, the polynomial $X_1^n+pX_2^n+\ldots +p^{n-1} X_n^n=\sum_{i=1}^n p^{i-1} X_i^n$ is a normic form (\cite{SKP}, Theorem 4).\\
\noindent Now the condition defining a formally $p$-adic field $F$ may be restated as follows, since the ring $\ZZ_{p\ZZ}[\gamma(F)]$ is the set of all elements $P(\gamma(x_1),\ldots,\gamma(x_n))$ with $P\in \ZZ_{p\ZZ}[X_1,\ldots,X_n]$ and $x_1,\ldots,x_n\in F$. Rewriting and reducing to the same denominator, to each $P=\sum_{\ov i} a_{\ov i} \ov X^{\ov i}$ with total degree $d$ and maximal multidegree $\ov j=\max\{\ov i : a_{\ov i}\neq 0\}$ (for the product ordering on $\NN^n$) we associate the polynomial $\wt P=p^{d+1}\prod_{k=1}[(X_k^p-X_k)^2-1]^{j_k}\sum_{\ov i} a_{\ov i} p^{d-|\ov i|} \prod_{k=1}^n (X_k^p-X_k)^{i_k}$. A field $F$ is thus formally $p$-adic if and only if for all $n\in\NN^*$ and $P\in \ZZ_{p\ZZ}[X_1,\ldots,X_n]$, $\wt P -1$ has no zero rational in $F$.

\begin{prop}
If $F$ is a $p$-adically closed field, then an $F$-algebra $A$ has a morphism into a special extension of $F$ if and only if no $\wt P-1$ has a zero rational in $A$. In particular, the special field extensions of $F$ are its formally $p$-adic extensions.
\end{prop}
\begin{proof}
As $F$ is formally $p$-adic, by what precedes the polynomials of the form $\wt P-1$ are in its algebraic signature. If $\phi:A\to F'$ is a morphism into a special extension of $R$, by Lemma \ref{SGNMOR} again no such polynomial has a zero rational in $A$.
Conversely, if no $\wt P-1$ has a zero rational in $A$, then by Lemma \ref{EXSPRIM} let $\p$ be a special prime of $A$ disjoint from $M_A$, and $F'=Fr(A/\p)$ : no $\wt P-1$ has a zero rational in $F'$, so $F'$ is a formally $p$-adic field, which embeds into a $p$-adically closed field by \cite{SKP}, Corollary to Proposition 4. Now as $F'$ is $p$-adically closed, by \cite{MI1}, Theorem 1, it preserves the algebraic signature of $F$, and thus is a special extension.
We conclude as in \ref{REALSPEC}.
\end{proof}

\begin{rem}
Both examples clarify the signification of the homogeneous signature and the special algebras associated to any field $k$. Here we made essential use of the fact that the first order theories (of real closed fields and of $p$-adically closed fields) are \emph{model-complete} but in general, there is no reason why $k$ should be the model of such a theory. This is where \emph{positive logic} will powerfully provide in \cite{PAG} a complementary point of view as a means to unifying the present context and algebraic geometry relativised to a \emph{model complete} theory of fields (as naturally appearing in number theory).
\end{rem}

\section{Localisation and function rings}\label{LOCFUN}
The notion of \emph{special radical} of an ideal, in some sense the \og true" generalisation of the classic radical to EQAG, allows us to deal with localisation of special algebras, and then to describe the rings of rational functions on any affine variety as certain special $*$-algebras essentially of finite type. Localisation is also interpreted in this context in an analogue of the prime spectrum as a representation for special $*$-algebras, which connects EQAG to scheme theory.

\subsection{The special radical}
Essential maximal ideals of a $k$-algebra $A$ represent its rational points in $k$ and not every interesting, i.e. special, $k$-algebra, has such : for instance, if $V$ is an irreducible affine variety its field of fractions $k(V)$ is special, but has no morphism into $k$. If we rather consider rational points in special extensions of $k$, by Corollary \ref{SPEFRAC} we have to replace essential maximal ideals by special primes, and thus define another kind of radical :

\begin{defi}
If $A$ is a $k$-algebra and $I$ an ideal of $A$, we will say that the intersection of all special prime ideals containing $I$ is the \emph{special radical of $I$}, which we note $\srad I$. We always have $\srad I\subs \erad I$.
\end{defi}
\begin{rem}
The special radical 
coincides with the equiradical in finitely generated algebras and $*$-algebras of finite $*$-type (Corollary \ref{ESSEQUISPE}).
\end{rem}

\noindent In this extended context, the generalisation of Lemma \ref{CARPRE} is the following 

\begin{lem}\label{CARPRESPE}
If $A$ is a $*$-algebra, then every maximal ideal of $A$ is special. In particular, if $A\neq 0$ there exists a $k$-morphism of $A$ into a special extension of $k$.
\end{lem}
\begin{proof}
The category of $*$-algebras is stable under quotients, so if $\m$ is a maximal ideal of $A$, $A/\m$ is a field and a $*$-algebra, and thus a special extension of $k$ by Proposition \ref{SPECEXT}.
\end{proof}

\noindent With the same notations as in Section \ref{ERAD}, we have the following characterisation of the special radical :

\begin{thm}\label{CARSPERAD}
For any $k$-algebra $A$ and ideal $I$ of $A$, we have :\\
i) $\srad I=\{a\in A : I\cap \Sigma_a\neq\emptyset\}$\\ 
ii) $\srad I=\{b\in A : \exists m,n\in \NN,\exists D(\ov x)\in \ms D,\exists \ov a\in A,\ b^mD(\ov a/b^n)\in I_b\}$\\
iii) $\srad I=\{b\in A : \exists m,n\in\NN,\exists D(\ov x)\in \ms D, \exists \ov a\in A,\ b^mD^\#(\ov a,b^n)\in I\}$\\
iv) $\srad I=\{b\in A: \exists n\in \NN,\exists D(\ov x)\in \ms D,\exists \ov a\in A,b^nD^\#(\ov a,b^n)\in I\}$\\
v) $\srad I=\{b\in A : \exists P(\ov x,y)\in \ms H,\exists \ov a\in A,\exists n\in\NN,P(\ov a,b^n)\in I\}$.
\end{thm}
\begin{proof}
i) Suppose $a\notin \srad I$ : by definition there exists a special prime $\p$ of $A$ containing $I$ and such that $a\notin \p$. For all $m,n\in\NN$, we have $a^m,a^n\notin\p$ and thus for all $D\in \ms D$ and appropriate $\ov b\in A$ we have $a^mD^\#(\ov b,a^n)\notin \p$ by Proposition \ref{CARPRIMSPE}, so $I\cap \Sigma_a=\emptyset$ by primality of $\p$. 
Conversely, if $I\cap \Sigma_a=\emptyset$ then $A_{\<a\>}/\Sigma_a^{-1} I\neq 0$ and as $((A/I)_{a+I})_M\cong (A/I)_{\<a+I\>}$ (by Lemma \ref{CARUNLOC}) $\cong\Sigma_a^{-1}(A/I)\cong A_{\<a\>}/\Sigma_a^{-1} I$, by Lemma \ref{CARPRESPE} there exists a $k$-morphism from $(A/I)_{\<a+I\>}$ into a special extension $K$ of $k$, and the kernel $\p\sups I$ of the composite morphism $A\to A/I \to (A/I)_{\<a+I\>}\to K$ is a special prime with $a\notin\p$, so we get $a\notin \srad I$.\\ 
ii) Let $b\in\srad I$ and by (i) an element $b^m\prod_{i=1}^n D_i^\#(\ov a_i,b^{k_i})$ of $I\cap \Sigma_b$ : we have $I_b\ni b^m\prod_i b^{k_id_i} D_i(\ov a_i/b^{k_i})=b^m\prod_i b^{k_id_i} D_i(b^{k_i'} \ov a_i/b^k)$ (with $k=\max_i\{k_i\}$ and $k'_i=k+k_i$) $=b^{m'} D(\ov a'/b^k)$ in $A_b$, with $m',k\in\NN$. Conversely, if $b^mD(\ov a/b^n)\in I_b$, we may in $A_b$ write $b^mD(\ov a/b^n)=c/b^k$ with $c\in I$ and $k\in \NN$, i.e $b^mD^\#(\ov a,b^n)/b^{nd}=c/b^k$ : there exists $u\in\NN$ such that $b^u(b^{m+k}D^\#(\ov a,b^n)-cb^{nd})=0$ in $A$, and therefore $b^{u+m+k}D^\#(\ov a,b^n)=b^{u+nd}c\in I$, so that $b\in \srad I$ by (i) again.\\
iii) First, by (ii) we have $b\in \srad{(0)}\Iff\exists D(\ov x),\exists \ov a,\exists m,n\in\NN,\ b^m D(\ov a/b^n)=0\in A_b\Iff b^{m-nd} D^\#(\ov a,b^n)=0$ in $A_b\Iff b^m D^\#(\ov a,b^n)=0$ in $A_b\Iff \exists s\in \NN,b^{m+s} D^\#(\ov a,b^n)=0$ in $A$, so $b\in \srad{(0)}\Iff \exists D(\ov x),\exists \ov a,\exists m,n,\ b^m D^\#(\ov a,b^n)=0$. Secondly, apply this to $A/I$ : we have $b\in \srad I\Iff [b]\in \srad{(0)}$ in $A/I\Iff \exists D(\ov x),\exists\ov a\in A, \exists m,n\in\NN,[b^m D^\#(\ov a,b^n)]=[b]^m D^\#([\ov a],[b]^n)=0\Iff \exists D(\ov x),\exists \ov a,\exists m,n\in\NN, b^mD^\#(\ov a,b^n)\in I$.\\
iv) By (iii), if $b\in \srad I$ let $b^mD^\#(\ov a,b^n)\in I$ : by homogeneity of $D^\#$, we have \newline$b^{n+m} D^\#(b^m \ov a,b^{n+m})$ $=b^{n+m(d+1)}D^\#(\ov a,b^n)\in I$, each coordinate being multiplied by $b^m$. The other way is obvious.\\
v) Let $b\in \srad I$ : by (iv) there exist $D(\ov x),n$ and $\ov a$ with $b^n D^\#(\ov a,b^n)\in I$; as $P(\ov x,y):=yD^\#(\ov x,y)$ is in $\ms H$ by Proposition \ref{HOMSGN0}, the direct inclusion is obvious. Now if $P\in \ms H$ and $P(\ov a,b^n)\in I$, $P$ has the form $y^eD^\#(\ov x,y)$ by the same proposition, so $b^{ne}D^\#(\ov a,b^n)\in I$, and by (iii) we get $b\in \srad I$.
\end{proof}

\begin{cor}\label{CARSPEID}
If $A$ is a $k$-algebra, then an ideal $I$ of $A$ is special if and only if $I=\srad I$.
\end{cor}
\begin{proof}
Suppose $I$ is special and $a\in \srad I$ : by Theorem \ref{CARSPERAD}(iii) there exist $m,n,D(\ov x),\ov b$ such that $a^mD^\#(\ov b,a^n)\in I$, so $a^n\in I$ by hypothesis. As $I$ is radical, we have $a\in I$, hence $I=\srad I$.
Conversely, if $\srad I=I$ let $P(\ov x,y)\in \ms H$ and $\ov a,b$ such that $P(\ov a,b)\in I$ : by Theorem \ref{CARSPERAD}(v), we have $b\in I$, so $I$ is special. 
\end{proof}

\begin{cor}\label{ESSEQUISPE}
If $A$ is a finitely generated $k$-algebra or a $*$-algebra of finite $*$-type over $k$, then $A$ is special if and only if $A$ is essential.
\end{cor}
\begin{proof}
By Corollary \ref{CARSPEID}, $A$ is special if and only $(0)=\srad{(0)}$, if and only if for all $D(\ov x)\in\ms D$, $\ov a\in A$ and $m,n\in\NN$ such that $b^mD^\#(\ov a,b^n)=0$, we have $b=0$, by Theorem \ref{CARSPERAD}(iii). It follows that if $A$ is finitely generated, it is essential if and only if it is special by Lemma \ref{CARSPESIG}, and if $A$ is a $*$-algebra of finite $*$-type, it is essential if and only if it is special by Corollary \ref{CARSPESIG'}.
\end{proof}

\subsection{Localisation of special algebras and rings of rational functions}
Our first use of the special radical is associated to various localisations of $k$-algebras, and the stability of the special character under such. This naturally leads us to some function rings associated to EQAG.

\begin{lem}\label{INCLOCSRAD}
If $A$ is a $k$-algebra, $I$ is an ideal of $A$ and $S$ a multiplicative subset of $A$, then $(\srad{I})_S= \srad{I_S}$.
\end{lem}
\begin{proof}
First we prove that $(\srad I)_S\subs \srad{I_S}$ : the bijection $\p\mapsto \p_S$ exchanging the primes of $A$ disjoint from $S$ and the primes of $A_S$ exchanges the special primes disjoint from $S$ and the special primes of $A_S$. Indeed, if $\p$ is a special prime and $\p\cap S=\emptyset$, let $P(\ov x,y)\in\ms H$ and $\ov a/s,b/s\in A_S$ with $(1/s^d).P(\ov a,b)=P(\ov a/s,b/s)\in\p_S$ : we have $P(\ov a,b)\in\p$, so $b\in\p$ by Proposition \ref{CARPRIMSPE}, whence $b/s\in \p_S$, which is special. Conversely, if $\p_S\in Spec(A_S)$ is special and $P(\ov a,b)\in \p$, we have $P(\ov a/1,b/1)\in\p_S$, and thus $b/1\in \p_S$ by \ref{CARPRIMSPE} again, whence $b\in\p$, which is special. 
Now let $a/s\in (\srad{I})_S$ and $\p_S$ a special prime of $A_S$ containing $I_S$ : as $I\subs \p$ and $a/s=b/t$ for some $b\in \srad I$, there exists $u\in S$ such that $u(at-bs)=0$ in $A$ and as $\p\cap S=\emptyset$ and $b\in\p$ we get $a\in\p$, whence $a/s\in \p_S$, and thus $a/s\in \srad{I_S}$.
Secondly, we prove that $\srad{I_S}\subs (\srad I)_S$ : if $a/s\in \srad{I_S}$, by Theorem \ref{CARSPERAD}(v) there exist $P(\ov x,y)\in\ms H$, $n\in\NN$ and $\ov b/t$ such that $P(\ov b/t,(a/s)^n)\in I_S$; if $n=0$, then $(1/t)^d.P(\ov b,t)\in I_S$ and there exists $u\in S$ with $u.P(\ov b,t)\in I$, whence $I\ni (ua)^d.P(\ov b,t)=P(\ov bua,tua)$ (as $d\geq 1$), and therefore $atu\in \srad I$ by \ref{CARSPERAD}(v) again. If $n\geq 1$, so that also $(1/ts^n)^d.P(\ov bs^n,a^nt)\in I_S$, there exists $u\in S$ such that $u.P(\ov bs^n,a^nt)\in I$, and therefore $I\ni u^d.(tu)^{d(n-1)}.P(\ov bs^n,a^nt)=P(\ov bs^nt^{n-1}u^n,a^nt^nu^n)$, whence $atu\in \srad I$ by \ref{CARSPERAD}(v) again. In both cases, we get $a/s=atu/stu\in (\srad I)_S$, so that $\srad{I_S}=(\srad I)_S$ and the proof is complete. 
\end{proof}

\begin{cor}\label{LOCPRIMSPE}
If $A$ is a special $k$-algebra and $S$ a multiplicative subset of $A$, then $A_S$ is a special $k$-algebra as well. In particular, the canonical localisation of $A$ and the localisations of $A$ at special primes are special $*$-algebras.
\end{cor}
\begin{proof}
If $A$ is special, then $(0)=\srad{(0)}$ by Corollary \ref{CARSPEID}, whence $(0)_S=(\srad{(0)})_S=\srad{(0)_S}$ by Lemma \ref{INCLOCSRAD}, so $A_S$ is special as well.
Now if $\p$ is a special prime of $A$, then $A_\p$ is special by what precedes, and is a $*$-algebra by Corollary \ref{LOCSPEPRIM}.
\end{proof}

\begin{prop}\label{SPEFRAC2}
If $A$ is a special $k$-algebra, then the total ring $\Phi A$ of fractions of $A$ is a special $*$-algebra.
\end{prop}
\begin{proof}
As $A$ is special, then $\Phi A$ is special as well by Corollary \ref{LOCPRIMSPE}. Now let $D(\ov x)\in \ms D$ and $\ov a/s$ an appropriate tuple from $\Phi A$ : if $b\in A$ and $D^\#(\ov a,s)b=0$ in $A$, then either $D^\#$ is constant (and non-zero) and thus $b=0$, or $deg(D^\#)=d>0$ and $0=D^\#(\ov a,s)b^d=D^\#(b\ov a,bs)$ and as $A$ is special, we get $bs=0$, whence $b=0$ because $s$ is simplifiable; it follows that $D^\#(\ov a,s)$ is simplifiable, so $D(\ov a/s)=(1/s^d)D^\#(\ov a,s)\in (\Phi A)^\xx$, which is therefore a $*$-algebra.
\end{proof}
\begin{rem}
The proof that $\Phi A$ is special if $A$ is, is quite straightforward without the use of \ref{INCLOCSRAD}.
\end{rem}

\begin{cor}\label{SPEFRAC3}
Every special algebra of total fractions is a $*$-algebra.
\end{cor}

\noindent We are now in a position to incorporate rings of rational functions, over \emph{any} affine subvariety of $k$, into our constellation of algebraic objects.
If $R$ is any commutative ring, the product of any two non-zero simplifiable elements $a,b$ of $R$ is a non-zero simplifiable element, so we define a \emph{rational function on $V$} as an equivalence class $[(O,f/g)]$ of pairs $(O,f/g)$, $O$ being a \emph{dense} open subset of $V$ and $f,g\in k[V]$ with $g$ simplifiable and nowhere vanishing on $O$, where two pairs $(O,f/g)$ and $(U,l/h)$ are equivalent whenever $f/g$ and $l/h$ coincide on $O\cap U$. These classes naturally form a $k$-algebra $k(V)$, generalising the function field of an irreducible affine variety. 

\begin{defi}
The ring $k(V)$ is the \emph{algebra of rational functions over $V$}. We say that $\phi\in k(V)$ is \emph{regular at $P\in V$} if $\phi=[(O,f/g)]$ with $P\in O$, and the \emph{value of $\phi$ at $P$} is then $\phi(P)=f(P)/g(P)$.
\end{defi}

\begin{rem}\label{SIMPDENS}
Any open subset $O=D_V(g)$ of $V$ defined by a simplifiable element $g$ of $k[V]$ is dense in $V$.
\end{rem}
\begin{proof}
Let $P\in V$, and let $U=D_V(h)$ be a basic open subset of $V$ such that $P\in U$ :  we have $U\neq \emptyset$, so $h\neq 0$ and as $g$ is simplifiable, we get $gh\neq 0$, therefore $O\cap U=D_V(fg)\neq\emptyset$ : $O$ is dense in $V$.
\end{proof}

\noindent Now let $R_V$ be a total ring of fractions for $k[V]$ : associating to every $f/g\in R_V$ the class of $(D_V(g),f/g)$ in $A$, we define a $k$-morphism $\alpha:A\to k(V)$.

\begin{prop}\label{ISOFRAC}
The morphism $\alpha$ is an isomorphism.
\end{prop}
\begin{proof}
Suppose $\alpha(f/g)=0$ : the quotient $f/g$ defines the zero function on $D_V(g)$, which is dense in $V$ by Remark \ref{SIMPDENS}. In particular, $f$ vanishes on $D_V(g)$, and by continuity (as $\Delta_V=\{(x,x)\in k^n\xx k^n : x\in V\}$ is closed) we get $f=0$, therefore $\alpha$ is injective. Now if $[(O,f/g)]\in A$, by definition we have $D_V(g)\subs O$, so that $\phi(f/g)=[(O,f/g)]$ and $\alpha$ is an isomorphism.
\end{proof}

\begin{cor}
The ring $k(V)$ of rational functions over an affine subvariety $V$ is a special $*$-algebra.
\end{cor}

\subsection{The special prime spectrum of a special $*$-algebra}
If $A$ is a $k$-algebra, we define an \og equiresidual" analogue of the prime spectrum, the \emph{special prime spectrum} of $A$, as the set $Spec^2\ A$ of all \emph{special} prime ideals of $A$, topologised as usual. The space $X=Spec^2\ A$ is thus essentially the set of generic points of $A$ in special extensions of $k$, and we define a sheaf of rings $\O_X$ on $X$ as the restriction of the structure sheaf on $Spec\ A$ : if $U\subs X$ is open, we let $\O_X(U)$ be the set of all maps $s:U\to \coprod_U A_\p$, such that for each $\p\in U$, $\exists a,b\in A$, $\exists U'\subs U$ open, with $\p\in U'$ and $s|_{U'}\equiv a/b$. Thus we already know that for every $\p\in Spec^2\ A$ we have a natural isomorphism $\O_{X,\p}\cong A_\p$, given by $[f,U]\mapsto f(\p)$ for $\p\in U$ and $f\in \O_X(U)$. The first thing we notice is that :

\begin{rem}
$\O_X$ is a sheaf of $*$-algebras.
\end{rem}
\begin{proof}
If $D(\ov x)\in \ms D$ and $\ov s=s_1,\ldots,s_n\in \O_X(U)$ is an appropriate tuple of locals sections with $s_i\equiv a_i/b_i$ for all $i$, then for each $\p\in U$ we have $D(\ov s)(\p)=\delta /b^d$ in $A_\p$ with $\delta=D^\#(b'_1a_1,\ldots,b_n'a_n,b)$, $b'_i=\prod_{j\neq i} b_j$, $b=\prod_j b_j$ and $d=deg(D)$. As $D(\ov s)(\p)$ is invertible by Corollary \ref{LOCSPEPRIM}, we have $\delta\in A-\p$, and thus the element $t$ of $\O_X(U)$ defined by $b^d/\delta$ is an inverse for $D(\ov s)$.
\end{proof}

\noindent In order to complete the affine doctrine of EQAG, by analogy with $Spec$ we want to use $Spec^2$ as a geometric representation of certain $k$-algebras.

\begin{lem}\label{COMPSPEC}
For every $k$-algebra $A$, the space $X=Spec^2\ A$ is compact.
\end{lem}
\begin{proof}
Let $X=\bigcup_I D(a_i)$ be a basic open cover of $X$, so that $1\in \srad{\sum_I Aa_i}$ : by Theorem \ref{CARSPERAD}(iii) there exists $m\in M_A\cap \sum_I Aa_i$, hence a finite subset $I_0\subs I$ such that $m\in \sum_{I_0} Aa_i$. As no special prime $\p$ can contain $m$, we get $X=\bigcap_{I_0} D(a_i)$, and $X$ is compact.
\end{proof}

\begin{lem}\label{RADLOC}
Let $h\in A$, $\alpha\in \Sigma_h$ and $s:\p\in D(h)\mapsto g/h\alpha\in A_\p$. If $s\equiv 0$, then $gh\in \srad{(0)}$.
\end{lem}
\begin{proof}
Assume $g/h\alpha\equiv 0$ on $D(h)$ : for each $\p\in D(h)$, there exists $\mu\in A-\p$ such that $\mu g=0$, so $g\in\p$, whence $gh\in\p$, and if $\p\in V(h)$ we have $gh\in\p$ as well so that $gh\in \srad{(0)}$.
\end{proof}

\noindent For any $h\in A$,  we have a natural morphism $\psi:A_{\<h\>}\to \Gamma(D(h),\O_X)$ defined by $\psi(g/\alpha):= [\p\in D(h)\mapsto g/\alpha \in A_\p]$. By analogy with Theorem \ref{MAG.3.6.a}, we have the following characterisation of the ring of local sections $\Gamma(D(h),\O_X)$ :

\begin{thm}\label{SECSPEC2}
The $k$-morphism $\psi$ is surjective, and an isomorphism if $A_{\<h\>}$ is special.
\end{thm}
\begin{proof}
As for surjectivity, let $s\in \O_X(D(h))$ and $D(h)=\bigcup_I D(a_i)$ a basic open cover, with $s|_{D(a_i)}\equiv g_i/h_i$  for all $i$. As in \ref{MAG.3.6.a}, as $D(a_i)\subs D(h_i)$ we have $\srad{a_i}\subs \srad{e_i}$  , so by Theorem \ref{CARSPERAD} there exists $\alpha_i \in \Sigma_{a_i}$ and $g'_i\in A$ with $\alpha_i=g'_ih_i$, hence $s|_{D(a_i)}\equiv g_ig'_ia_i/a_i\alpha_i$ and we may assume that $D(a_i)=D(h_i)$ for all $i$.; by Lemma \ref{COMPSPEC}, we may write $D(h)=\bigcup_{I_0} D(h_i)$ with $I_0\subs I$ finite. For all $i,j\in I_0$, $g_i/h_i$ and $g_j/h_j$ coincide on $D(h_ih_j)=D(h_i)\cap D(h_j)$, so $h_ih_j(g_ih_j-g_jh_i)=0$ by Lemma \ref{RADLOC} again. Now as also $D(h)=\bigcup_{I_0} D(h_i^2)$, we have $h\in \srad{h_1^2,\ldots, h_m^2}$ and by \ref{CARSPERAD} again there exist $\alpha\in \Sigma_h$ and $a_i\in A$ such that $\alpha=\sum_{I_0} a_ih_i^2$. Let $\p\in D(h)$ : for each $j\in I_0$ such that $\p\in D(h_j)$, we have $h_j^2\sum_{I_0} a_ig_ih_i=\sum_{I_0} a_i g_jh_jh_i^2=g_jh_j\alpha$; as $s|_{D(h_j)}\equiv g_j/h_j$, we get $s.h_j^2\alpha\equiv g_jh_j\alpha\equiv h_j^2\sum_{I_0} a_ig_i h_i$ on $D(h_j)$, thus $s$ is represented by $\sum_{I_0} a_ig_ih_i/\alpha$ on each $D(h_j)$, and hence on $D(h)$, so $\psi$ is surjective.\\
As for injectivity, suppose $A_{\<h\>}$ is special and $\psi(g/\alpha)\equiv 0$ : the section of $\O_X$ defined on $D(h)$ by $gh/\alpha h$ is zero, hence $gh^2\in \srad{(0)}$ in $A$ by Lemma \ref{RADLOC}, whence $g/\alpha=gh^2/\alpha h^2\in \srad{(0)}$ in $A_{\<h\>}$ by Lemma \ref{INCLOCSRAD}, so $g/\alpha=0$ by hypothesis, and $\psi$ is injective.
\end{proof}

\noindent Theorem \ref{SECSPEC2} singles out again special $*$-algebras, which turn out to form a \og a good class" for which $Spec^2$ gives exact geometric representations :

\begin{cor}
If $A$ is a special $k$-algebra, then $\O_X$ is a sheaf of special $*$-algebras and $A_M$ is naturally isomorphic to the ring of global sections of $X=Spec^2\ A$. In particular, if $A$ is a special $*$-algebra then $A\cong \Gamma(X,\O_X)$.
\end{cor}
\begin{proof}
Suppose $A$ is special : by Corollary \ref{LOCPRIMSPE}, every localisation of $A$ is special, and by Theorem \ref{SECSPEC2}, for every basic open $U\subs X$ the ring $\O_X(U)$ is special; as special algebras are stable under projective limits, $\O_X$ is a sheaf of special $*$-algebras. By \ref{SECSPEC2} also, we have $\Gamma(X,\O_X)\cong A_{\<1\>}\cong A_M$, so if $A$ is furthermore a $*$-algebra, we have $A\cong A_M$, whence $A\cong \Gamma(X,\O_X)$.
\end{proof}

\noindent Finally, let $A$ be an affine $*$-algebra : the continuous mapping $\theta :\phi\in X=Sp_k A\mapsto Ker(\phi)\in Y=Spec^2\ A$ is clearly a homeomorphism onto its image, and a morphism of locally ringed spaces : if $s\in\O_Y(U)$ for $U\subs Y$ open and $s|_U\equiv a/b$, then $\theta_U^\#(s):\phi\mapsto \phi(a)/\phi(b)$ is in $\O_X(U)$, and the residual morphisms $\theta^\#_\phi:[s]\in \O_{Y,Ker(\phi)}\mapsto [\theta_U^\#(s)]\in \O_{X,\phi}$ are obviously local.
Now every maximal ideal of $A$ is essential, hence special, so the maximal ideals of $A$ are the closed points of $Y$ \emph{and} $Z=Spec\ A$. Furthermore, $Im(\theta)$ is \emph{dense} in $Z$ (and hence in $Y$) : if $\p\in D(a)$, a basic open of $Z$, as $a\neq 0$ there exists $\phi\in Sp_k\ A$ such that $\phi(a)\neq 0$ ($A$ being isomorphic to $K[V]_M$ for a $V\subs k^n$, its Jacobson radical is $(0)$) and thus $Ker(\phi)=\theta(\phi)\in D(a)$, so we have proved :

\begin{prop}
Every affine algebraic equivariety over $k$ is isomorphic to the dense subspace of closed points of an affine scheme over $k$.
\end{prop}

\section*{Conclusions}
So far we have built a robust theory of (equiresidual) affine algebraic varieties over any field and a promising extension of the usual commutative algebra underlying affine algebraic geometry over algebraically closed fields. We have laid the groundwork for a theory of algebraic equivarieties, which we will develop in \cite{EQAG2}, in which we expound the important particular case of quasi-projective equivarieties. 
From this point on, several directions may be pursued. First we wish to explore this subject further and investigate some usual constructions and theorems from classic algebraic geometry in the present setting, as the study of simple points and tangent spaces, of dimension global and local, of \'etale morphisms between algebraic equivarieties, and of differential forms, thanks to the formalism of canonical localisations and $*$-algebras, as well as normal varieties, if we may reinterpret the notion of integral dependence in the context of $*$-algebras. 
Another series of questions lies in the potential applications of EQAG to the \og inner" algebraic geometry of any particular field, using the concepts and tools presented here. A good start and key trial would be to sketch some general features of algebraic geometry over $\QQ$. 
In this perspective, normic forms have played a fundamental role in the present work, but were only used as a basic ingredient for the \"Aquinullstellensatz, the equiradical and the characterisation of the homogeneous signature and special algebras and ideals. They however appear in an axiomatisable setting in \cite{MK} in connexion with Galois extensions of the ground field, so this topic should be explored further in relation to Galois theory, hopefully connecting EQAG with algebraic geometry in the algebraic closure of a perfect ground field.
From another point of view, Section \ref{ESSPECDUAL} shows that the maximal spectrum functor is very well behaved with respect to all $*$-algebras of finite $*$-type whenever they are special (i.e essential). Along this line of thought, dropping the \og special" hypothesis there should be conceived an equiresidual alternative to schemes in order to take care of infinitesimals, maybe an analogue of the usual algebraic spaces. This would be closely related to the study of the interaction between EQAG and classic henselisation.

\noindent We have occasionally borrowed notions originally arising in first order logic, and transposed them here in the form of pure commutative algebra. We will develop the foundations for a universal framework for model-theoretic algebraic geometry in a subsequent work, the forthcoming \cite{PAG}, 
which deals with \og positive algebraic geometry", an interplay between EQAG, positive logic (a negation-free version of A. Robinson's theory of existential completeness) and quasivarieties (a \og subdoctrine" of first order logic).
With some background on \'etale morphisms of affine equivarieties, we wish to build on this foundation in order to investigate a ubiquitary type of theories of fields which appear in connexion to number theory (real closed fields, p-adically closed fields, complete theories of pseudo-algebraically closed fields, ...), and which have been recognised by McKenna in \cite{MK} and systematised by B\'elair in \cite{BEL} thanks to the work of Robinson in \cite{ER}, joining forces with the tradition of coherent logic and hopefully connecting with topos theory in order to provide a natural framework for algebraic geometry in a \og non-spatial" setting. The archetypical example of pseudo-algebraically closed fields will fall into this field of investigation and we wish the present work and some elements of this program to be of some use to \og Field Arithmetics" (see \cite{FA}), where one's particular interest lies in algebraic geometry over many fields which are not algebraically closed.

\bibliography{EQAG1_2022-06-03}
\bibliographystyle{alpha}

\vspace{0.5cm}
\textit{\footnotesize E-mail adress} : \texttt{\footnotesize jeb.math@gmail.com}

\end{document}